\newenvironment{proof}{\paragraph{Proof:}}{\hfill$\square$ \vskip 0.12cm}
\newtheorem{theorem}{Theorem}[section]
\newtheorem{lemma}[theorem]{Lemma}
\newtheorem{corollary}[theorem]{Corollary}
\theoremstyle{definition}
\newtheorem{definition}[theorem]{Definition}
\newtheorem{example}[theorem]{Example}
\newtheorem{remark}[theorem]{Remark}
\numberwithin{equation}{section}
\def \F{\mathbb{F}}
\def \L{\mathbb{L}}
\def \rank{\mathrm{rank}}
\newcommand{\norm}[1]{\left\lVert#1\right\rVert}
\newcommand{\normp}[1]{|\!|\!|{#1}|\!|\!|}
\newcommand{\columnshiftsum}{\begin{tikzpicture}[scale=0.24]
\draw (0,0) rectangle (1,1);
\draw[->] (0,.5) -- (1.6,.5);
\draw (.5,0) -- (.5,1);
\end{tikzpicture}}
\newcommand{\rowshiftsum}{\begin{tikzpicture}[scale=0.24,baseline]
\draw (0,0) rectangle (1,1);
\draw (0,.5) -- (1,.5);
\draw[<-](.5,-.6) -- (.5,1);
\end{tikzpicture}}
\begin{document}

\title{Vector Spaces of Generalized Linearizations for Rectangular Matrix Polynomials}

% Short title is optional, it will appear in running heads.
% It is necessary only if the title is to long to be used in running heads

\author{Biswajit Das~\thanks{%
                 Department of Mathematics,
                Indian Institute of Technology Guwahati,
                Guwahati 781039, Assam, India,
               (\texttt{biswajit.das@iitg.ac.in, shbora@iitg.ac.in})}
\and
 Shreemayee Bora~\footnotemark[1]}
\date{}
\maketitle

\begin{abstract}
        The seminal work~\cite{MacMMM06a} introduced vector spaces of matrix pencils, with the property that almost all the pencils
in the spaces are strong linearizations of a given square regular matrix polynomial. This work was subsequently extended to include the case of square singular
matrix polynomials in~\cite{DeDM09}. We extend this work to non-square matrix polynomials by proposing similar vector spaces of rectangular matrix pencils that are equal to the ones in~\cite{MacMMM06a} when the polynomial is square. Moreover, the properties of these vector spaces are similar to those in~\cite{DeDM09} for the singular case. In particular, the complete eigenvalue problem associated with the matrix polynomial can be solved by using almost every matrix pencil from these spaces. Further, almost every pencil in these spaces can be `trimmed' to form many smaller pencils that are strong linearizations of the matrix polynomial which readily solve the complete eigenvalue problem for the polynomial. These linearizations are easier to construct and are often smaller than the Fiedler linearizations introduced in~\cite{DeDM12}. Further, the global backward error analysis in~\cite{DopLPV16} applied to these linearizations, shows that they provide a wide choice of linearizations with respect to which the complete polynomial eigenvalue problem can be solved in a globally backward stable manner.
\end{abstract}

\noindent
{\bf AMS subject classification.} 15A22, 15A18, 15A03, 15A23, 15A54, 47J10, 65F15, 65F35, 93B1.

\vskip 0.2cm
\noindent
{\bf Keywords.} Rectangular matrix polynomials, generalized linearization, strong generalized linearization, linearization, strong linearization, complete polynomial eigenvalue problem, recovery of minimal indices and bases, backward error analysis.

\section{Introduction}~\label{intro}

Eigenvalue problems associated with matrix polynomials $P(\lambda) = \sum_{i=0}^k\lambda^iA_i,$ where $A_i, i = 0, \ldots, k$ are $m \times n$ real or complex matrices, occur in a wide range of applications like vibration analysis of machines, building and vehicles, in control theory and linear systems theory and as approximate solutions of other nonlinear eigenvalue problems~\cite{TisM01, GohLR82, Kai80, Ros70, Var91}.

When the polynomial is square and regular, i.e.,  $\det \, P(\lambda) \not\equiv 0,$ the associated polynomial eigenvalue problem consists of finding the finite and infinite eigenvalues and corresponding eigenvectors. However when the polynomial is singular, i.e., when it is either non-square or $\det \, P(\lambda) \equiv 0,$ then the eigenvalue problem is said to be a complete eigenvalue problem as in addition to finite and infinite eigenvalues and corresponding elementary divisors, the minimal indices and bases corresponding to the left and right null spaces of the polynomial also have to be computed. The most common approach for solving such problems is to linearize them by converting the problem into an equivalent problem associated with a larger matrix pencil of the form $L(\lambda) = \lambda X + Y$ called a linearization of $P(\lambda),$ and solving the eigenvalue problem for $L(\lambda)$ by using standard algorithms like the QZ algorithm~\cite{GolV13} when $L(\lambda)$ is regular, or the staircase algorithm~\cite{Van79} when $L(\lambda)$ is singular. The solution for $P(\lambda)$ is then recovered from that of its linearization.  The solution of the complete eigenvalue problem for singular matrix polynomials is more challenging as ideally, there should be a simple rule for extracting left and right minimal bases and indices of the polynomial from those of its linearization. We refer to~\cite{GohLR82} and a more recent survey article~\cite{MacMT15} for the theory of polynomial eigenvalue problems and their solutions.

The most commonly used forms of linearizations for solving polynomial eigenvalue problems associated with $P(\lambda) = \sum_{i=0}^k\lambda^iA_i$ are the first and second Frobenius companion forms $C_1(\lambda)$ and $C_2(\lambda),$ given by~\eqref{complin1} and \eqref{complin2} respectively. One of the first systematic studies of linearizations  to be undertaken was~\cite{MacMMM06a} which introduced the following vector spaces $\L_1(P)$ and $\L_1(P)$ of matrix pencils for a given $n \times n$ regular matrix polynomial $P(\lambda)$ as sources of linearizations of $P(\lambda).$

\begin{eqnarray}
\mathbb{L}_1(P) &:=& \{L(\lambda):L(\lambda)(\Lambda_k(\lambda)\otimes I_n)=v\otimes P(\lambda),v\in\mathbb{F}^k\}, \label{l1reg} \\
\mathbb{L}_2(P) &:=& \{L(\lambda):(\Lambda_k(\lambda)^T\otimes I_n)L(\lambda)=w^T\otimes P(\lambda),w\in\mathbb{F}^k\} \label{l2reg}
\end{eqnarray}
\begin{equation}\label{dualminb1}
\mbox{ where }\Lambda_k(\lambda) := [\lambda^{k-1},\dots,\lambda,1]^T.
\end{equation}

The defining identities in~\eqref{l1reg} and \eqref{l2reg} are called the right and left ansatz equations respectively and the corresponding vectors $v$ in~\eqref{l1reg} and the vector $w$ in~\eqref{l2reg} are called right and left ansatz vectors.
This work gave a whole new direction to research in the theory of linearizations due to the special properties of these vector spaces. For instance, it was shown that constructing pencils in these spaces corresponding to a given ansatz vector is very simple and almost all the resulting pencils are linearizations of $P(\lambda)$ from which the eigenvalues and corresponding eigenvectors can be easily recovered. In further work~\cite{HigMT09,MacMMM06b,FasMMS08}, it was shown that if $P(\lambda)$ has some special structure like, Hermitian, symmetric, $\star$-alternating and $\star$-palindromic, (see~\cite{MacMT15} for definitions), then there exist subspaces of $\L_1(P)$ and $\L_2(P)$ with the property that almost every pencil of the subspace is a structure preserving linearization of $P(\lambda)$ from which both finite and infinite eigenvalues of $P(\lambda)$ and corresponding eigenvectors can be easily recovered.

It was shown in~\cite{DeDM09} that even when $P(\lambda)$ is square but singular, almost every pencil in $\L_1(P)$ and $\L_2(P)$ is a linearization of $P(\lambda)$ from which the solution of complete eigenvalue problem for $P(\lambda)$ can be easily recovered. The vector space setting for constructing linearizations has since been extended to cover other polynomial bases~\cite{FasS17} and inspired further work that throws fresh light on these spaces~\cite{NakNT17}. Other important choices of linearizations not covered by $\L_1(P)$ and $\L_2(P)$ are the Fiedler pencils and their generalizations~\cite{AntV04,VolA11,DeDM10,BueDD11,BueF12} which are also sources of linearizations for non-square matrix polynomials~\cite{DeDM12}. Systematic studies of linearizations that cover both square and non-square linearizations are relatively recent in the literature. For example, \cite{DopLPV16} introduced the framework of block minimal bases pencils as potential linearizations of rectangular matrix polynomials with focus on particular subclasses like the Block Kronecker pencils. These ideas were further extended in~\cite{BueDPSZ16}. Inspired by~\cite{MacMMM06a}, the recent work~\cite{FasS18} considers linearizations of rectangular matrix polynomials in a vector space setting. Referred to as Block Kronecker ansatz spaces, these vector spaces contain Block Kronecker linearizations as well as Fiedler linearizations and their extensions modulo permutations and share some of the important properties that $\L_1(P)$ and $\L_2(P)$ have when $P(\lambda)$ is square. However, the Block Kronecker ansatz spaces do not become $\L_1(P)$ and $\L_2(P)$ when $P(\lambda)$ is square.

 The goal of this present work is to provide a direct generalization of the spaces $\L_1(P)$ and $\L_2(P)$ to the case when $P(\lambda)$ is not square by forming vector spaces of matrix pencils that have some of the key features of $\L_1(P)$ and $\L_2(P)$ and coincide with them when $P(\lambda)$ is square. We propose such vector spaces and show that the matrix pencils in these spaces can be constructed from the coefficient matrices of $P(\lambda)$ in a manner very similar to the ones in $\L_1(P)$ and $\L_2(P).$ We also show that the solution of the complete eigenvalue problem for $P(\lambda)$ can be easily recovered from that of almost every pencil in these spaces. To this end, we define generalized linearizations of matrix polynomials (which we refer to in short as g-linearizations), and their strong versions and show that the proposed vector spaces have all the properties with respect to being g-linearizations that $\L_1(P)$ and $\L_2(P)$ are shown to possess with respect to being linearizations of square singular polynomials in~\cite{DeDM09}.

 Although the pencils in our proposed vector spaces are not linearizations of the non-square polynomial $P(\lambda)$ in the conventional sense, we show that almost every such pencil in these spaces can give rise to many linearizations of $P(\lambda)$ from which the finite and infinite eigenvalues and corresponding elementary divisors as well as left and right minimal indices and bases of $P(\lambda)$ can be easily extracted. We also give the relationship between these linearizations and those in some of the Block Kronecker ansatz spaces in~\cite{FasS18}, thus showing how g-linearizations and linearizations arising from them, interact with some of the important linearizations for rectangular matrix polynomials in the literature.

 From the point of view of computation, a desirable property of any linearization for solving an eigenvalue problem associated with a matrix polynomial $P(\lambda)$ is that the computed solution is the exact solution of some polynomial $P(\lambda) + \Delta P(\lambda)$ such that the ratio $\frac{\normp{\Delta P}}{\normp{P}}$ is of the order of unit roundoff ${\bf u}$ with respect to some choice of norm $\normp{ \cdot}$ on matrix polynomials. Moreover, when $P(\lambda)$ is singular, it is also desirable that the rules for extracting the left and right minimal indices of $P(\lambda)$ from a particular class of linearizations for $P(\lambda)$ remains the same for $P(\lambda) + \Delta P(\lambda)$ with respect to that class. This is referred to as global backward stability analysis for the polynomial eigenvalue problem and has been undertaken for algorithms that use the Frobenius companion linearizations in~\cite{VanD83}. More recently this has been extended to the Block Kronecker linearizations in~\cite{DopLPV16} which identifies optimal choices of Block Kronecker linearizations that ensure global backward stability when used to solve the eigenvalue problem for $P(\lambda).$ We extend the analysis in~\cite{DopLPV16} to the linearizations of $P(\lambda)$ extracted from g-linearizations. Our analysis shows that there is a wider choice of linearizations beyond the ones identified in~\cite{DopLPV16} that can be used to solve the complete eigenvalue problem for $P(\lambda)$ in a backward stable manner.

\section{Definitions and notations}\label{basic}

In this paper we use standard notations like  $\mathbb{F}$ to denote the field of real or complex numbers,  $I_n$ to denote the $n \times n$ identity matrix and $e_i, \, 1 \leq i \leq n,$ to denote the $i$-th column of $I_n$ unless otherwise specified. Also, wherever it is necessary to emphasize the dimension of a zero matrix, we will use $0_n$ to denote a column of $n$ zeros and $0_{m \times n}$ to denote the $m \times n$ zero matrix. We will use $\mathbb{F}(\lambda)$ to denote the field of rational functions with coefficients in $\mathbb{F}$ and $\mathbb{F}(\lambda)^{n}$ to denote the vector space of $n$-tuples with entries from $\mathbb{F}(\lambda)$. Also $\mathbb{F}[\lambda]$ will denote the ring of polynomials over the field $\mathbb{F}$ and $\mathbb{F}[\lambda]^{m \times n}$ will denote the ring of $m \times n$ matrix polynomials with entries from $\mathbb{F}[\lambda].$

Here we will consider $m\times n$ matrix polynomials with grade $k$ of the form
$$P(\lambda)=\displaystyle\sum_{i=0}^k \lambda^i A_i \in \mathbb{F}[\lambda]^{m\times n},$$
where any of the coefficient matrices may be the zero matrix. Degree of $P(\lambda)$ denoted by $\mathrm{deg} \, P$ is the maximum integer $d$ such that $A_d\neq 0$. In this paper we will assume that $\mathrm{deg} \, P \geq 2.$ A square matrix polynomial $Q(\lambda)$ is said to be unimodular if its determinant is a nonzero constant independent of $\lambda.$

The normal rank of $P(\lambda)$, denoted by $\mathrm{nrank}P(\lambda)$, is the rank of $P(\lambda)$ considered as a matrix with entries in $\mathbb{F}(\lambda)$. Also the $k$-reversal $\mathrm{rev}_kP(\lambda)$ of  $P(\lambda)$ is defined by $\mathrm{rev}_kP(\lambda)=\lambda^kP(1/\lambda)$.

A finite eigenvalue of $P(\lambda)$ is an element $\lambda_0\in\mathbb{F}$ such that $\mathrm{rank}P(\lambda_0)<\mathrm{nrank}P(\lambda)$. We say that $P(\lambda)$ with grade $k$ has an infinite eigenvalue if the $k-$reversal polynomial $\mathrm{rev}_kP(\lambda)=\lambda^kP(1/\lambda)$ has zero as an eigenvalue.

The following subspaces associated with $P(\lambda)$ will be frequently used.

\begin{definition}
 The right and left null spaces of a $m\times n$ matrix polynomial $P(\lambda)$, denoted by $N_r(P)$ and $N_l(P)$ respectively are defined as follows.
  \begin{eqnarray*}
 N_r(P) & = & \{x(\lambda)\in\mathbb{F}(\lambda)^{n}:P(\lambda)x(\lambda)\equiv 0\},\\
 N_l(P) & = & \{y(\lambda)\in\mathbb{F}(\lambda)^{m}:y(\lambda)^TP(\lambda) \equiv 0\}.
\end{eqnarray*}
\end{definition}

A vector polynomial is a vector whose entries are polynomials. For any subspace of $\mathbb{F}(\lambda)^{n}$, it is always possible to find a basis consisting entirely of vector polynomials. The degree of a vector polynomial is the greatest degree of its components, and the order of a polynomial basis is defined as the sum of the degrees of its vectors. Also any subspace of $\mathbb{F}(\lambda)^n$ has a polynomial basis of least order among all such bases and the ordered list of degrees of the vector polynomials in any such basis is always the same~\cite{For75}. A minimal basis of the subspace is therefore defined as any polynomial basis of least order among all such bases and the minimal indices of the subspace are the ordered list of degrees of the vector polynomials in such a basis. In particular we have the following definitions.

\begin{definition} For a given $m\times n$ matrix polynomial $P(\lambda),$ a left minimal basis is a minimal basis of $N_l(P)$ and a right minimal basis is a minimal basis of $N_r(P).$
\end{definition}

\begin{definition} For a given $m\times n$ matrix polynomial $P(\lambda),$ let $\{ x_1(\lambda), \ldots, x_p(\lambda)\}$ be a right minimal basis and $\{y_1(\lambda), \ldots, y_q(\lambda)\}$ be a left minimal basis such that $$\mathrm{deg} \, x_1 \leq \cdots \leq \mathrm{deg} \, x_p \mbox{ and } \mathrm{deg} \, y_1 \leq \cdots \leq \mathrm{deg} \, y_q.$$ Setting $\eta_i = \mathrm{deg} \, x_i, i = 1, \ldots, p,$ and $\epsilon_j = \mathrm{deg} \, y_j,$ $ j = 1, \ldots, q,$ the right and left minimal indices of $P(\lambda)$ are defined as $\eta_1 \leq  \cdots \leq \eta_p,$ and $\epsilon_1 \leq \cdots \leq \epsilon_q$ respectively.
\end{definition}

The left and right minimal bases and indices of a matrix polynomial $P(\lambda)$ are defined as the minimal bases and indices of its left  and right null spaces $N_l(P)$ and $N_r(P)$ respectively.

\noindent The most widely used approach for solving polynomial eigenvalue problems is linearization.

\begin{definition}[Linearization]
 A matrix pencil $L(\lambda)=\lambda X+Y$ with $X,Y\in \mathbb{C}^{(m+s) \times (n+s)} $ is a linearization of an $m \times n$ matrix polynomial $P(\lambda)$ of grade $k$ if there exist two unimodular matrix polynomials $E(\lambda)\in\mathbb{F}[\lambda]^{(m+s) \times (m+s)}$ and $F(\lambda)\in \mathbb{F}[\lambda]^{(n+s) \times (n+s)}$ for some positive integer $s$ such that
 $$E(\lambda)L(\lambda)F(\lambda)= \left[\begin{array}{cc} P(\lambda) & \\  & I_s\end{array}\right].$$
\end{definition}

For example the first and second Frobenius companion forms $C_1(\lambda)$ and $C_2(\lambda)$ given by

\begin{eqnarray}
C_1(\lambda) & := & \lambda \begin{bmatrix}A_k & & &\\ &I_n& & \\ & & \ddots &\\ & & & I_n\end{bmatrix} + \begin{bmatrix}A_{k-1} &A_{k-2} &\dots &A_0\\ -I_n& & &0\\ & \ddots&  &\\ & & -I_n& 0\end{bmatrix} \label{complin1}\\
C_2(\lambda) & := & \lambda \begin{bmatrix}A_k & & &\\ &I_m& & \\ & & \ddots &\\ & & & I_m\end{bmatrix} + \begin{bmatrix}A_{k-1} &-I_m &\dots & \\ A_{k-2}& & &\\ & \ddots&  &-I_m \\ A_0&0 & & 0\end{bmatrix} \label{complin2}
\end{eqnarray}
are linearizations of $P(\lambda)$ with $s = (k-1)n$ and $s = (k-1)m$ respectively. It is clear that a matrix polynomial and its linearization has the same finite eigenvalues and corresponding elementary divisors (for details, see, \cite{GohLR82}). However, if the same is to be guranteed for the eigenvalue at infinity also, then the linearization has to be a strong linearization of $P(\lambda).$

\begin{definition}[Strong Linearization]
A linearization $L(\lambda) = \lambda X+Y$ of a matrix polynomial $P(\lambda)$ of grade $k$ is called a strong linearization of $P(\lambda)$  if $\mathrm{rev}_1 \, L(\lambda)$ is also a linearization of $\mathrm{rev}_k \, P(\lambda).$
\end{definition}

\section{Vector spaces of generalized linearizations}

\noindent The vector spaces $\mathbb{L}_1(P)$ and $\mathbb{L}_2(P)$ defined by~\eqref{l1reg} and~\eqref{l2reg} were introduced in~\cite{MacMMM06a} as sources of linearizations for a given square regular matrix polynomial $P(\lambda).$ This work was subsequently extended in~\cite{DeDM09} to the case of square singular matrix polynomials. In this section we extend the notion of these spaces to the case of rectangular matrix polynomials. For this we introduce the notion of generalized linearizations of matrix polynomials which we refer to  as g-linearizations in short. We then define vector spaces of matrix pencils corresponding to the polynomial $P(\lambda)$ and show that they have properties with respect to g-linearizations that closely resemble those of $\L_1(P)$ and $\L_2(P)$ established in~\cite{DeDM09} with respect to linearizations in the square singular case.

\subsection{Generalized linearizations of matrix polynomials}

\begin{definition}[g-Linearization]
 A matrix pencil $L(\lambda)=\lambda X+Y$ with $X,Y\in \mathbb{C}^{mk\times nk} $ is called a g-linearization of an $m \times n$ matrix polynomial $P(\lambda)$ of grade $k$ if there exist two unimodular matrices $E(\lambda)\in\mathbb{F}[\lambda]^{mk\times mk}$ and $F(\lambda)\in \mathbb{F}[\lambda]^{nk\times nk}$ such that
 $$E(\lambda)L(\lambda)F(\lambda)=\left[ \begin{array}{cc} P(\lambda) & \\ & I_{k-1} \otimes I_{m,n} \end{array} \right]$$
 %Here $I_{m,n}=\begin{bmatrix}I_n\\0\end{bmatrix}$
%  \[
Here $ I_{m,n} =\begin{bmatrix}I_n\\0_{(m-n) \times n}\end{bmatrix} \text{ if } m>n$, $ I_{m,n} = \begin{bmatrix}I_m & 0_{m \times (n-m)} \end{bmatrix} \text{ if } m<n$ and $I_{m,n}=I_m=I_n \text{ if } m=n.$
% \]
\end{definition}

\begin{definition}
 A matrix pencil $L(\lambda)=\lambda X+Y$ with $X,Y\in \mathbb{C}^{mk\times nk} $ is a strong g-linearization of an $m \times n$ matrix polynomial $P(\lambda)$ of grade $k$ if $L(\lambda)$ is a g-linearization of $P(\lambda)$ and $\mathrm{rev}_1L(\lambda)$ is a g-linearization of $\mathrm{rev}_kP(\lambda).$
\end{definition}

From the above definition, it is clear that every linearization of a square matrix polynomial is also a generalized linearization, which justifies our choice for the term. Also, evidently a matrix polynomial has the same eigenvalues and elementary divisors as it g-linearization and the same finite and infinite eigenvalues and elementary divisors as its strong g-linearization. Therefore, to establish that the solution of a complete eigenvalue problem for a rectangular matrix polynomial can be obtained from a given strong g-linearization, it is enough to show that the minimal bases and indices of the polynomial can be easily recovered from the g-linearization.

\subsection{The vector spaces $\mathbb{L}_1(P)$ and $\mathbb{L}_2(P)$}

To extend the work in~\cite{MacMMM06a} to non-square matrix polynomials, we propose the following vector spaces, which we continue to denote by $\L_1(P)$ and $\L_2(P)$ for ease of notation.

\begin{eqnarray}
\mathbb{L}_1(P) & := & \{L(\lambda):L(\lambda)(\Lambda_k(\lambda)\otimes I_n)=v\otimes P(\lambda),v\in\mathbb{F}^k\}, \label{l1g} \\
\mathbb{L}_2(P) & := & \{L(\lambda):(\Lambda_k(\lambda)^T\otimes I_m)L(\lambda)=w^T\otimes P(\lambda),w\in\mathbb{F}^k\}. \label{l2g}
\end{eqnarray}

Following~\cite{MacMMM06a} we will refer to the vector $v$ ($w$) in the identity in~\eqref{l1g}, (\eqref{l2g}) satisfied by $L(\lambda) \in \L_1(P),$ ($L(\lambda) \in \L_2(P)$) as the right (left) ansatz vector corresponding to $L(\lambda).$ The sets $\mathbb{L}_1(P)$ and $\mathbb{L}_2(P)$ are not empty as $ C_1^g(\lambda) := \lambda X_1+Y_1 \in \mathbb{L}_1(P)$ with right ansatz vector $v = e_1 \in \F^k$\\
where $X_1=\begin{bmatrix}A_k & & &\\ &I_{m,n}& & \\ & & \ddots &\\ & & & I_{m,n}\end{bmatrix}$, $Y_1=\begin{bmatrix}A_{k-1} &A_{k-2} &\dots &A_0\\ -I_{m,n}& & &0\\ & \ddots&  &\\ & & -I_{m,n}& 0\end{bmatrix}$.\\
%$\Rightarrow C_1(\lambda)(\Lambda\otimes I_n)=e_1\otimes P(\lambda)$.\\

\noindent and $C_2^g(\lambda) := \lambda X_2+Y_2\in \mathbb{L}_2(P)$ with left ansatz vector $w = e_1 \in \F^k,$\\
where $X_2=\begin{bmatrix}A_k & & &\\ &I_{m,n}& & \\ & & \ddots &\\ & & & I_{m,n}\end{bmatrix}$, $Y_2=\begin{bmatrix}A_{k-1} &-I_{m,n} &\dots & \\ A_{k-2}& & &\\ & \ddots&  &-I_{m,n}\\ A_0&0 & & 0\end{bmatrix}$.

As Theorem~\ref{Zthm} and Theorem~\ref{Zthm2} show, if $m\geq n$ then $C_1^g(\lambda)$ is a strong g-linerization of $P(\lambda)$ and if $m\leq n$ then $C_2^g(\lambda)$ is a strong g-linerization of $P(\lambda)$.

For any matrix polynomial $P(\lambda)$, clearly $\mathbb{L}_1(P)$ and  $\mathbb{L}_2(P)$ are vector spaces over $\F.$ In this section we find some important properties of these vector spaces. The results show that if the $m \times n$ matrix polynomial $P(\lambda)$ is tall, i.e., $m\geq n,$ then the properties of $\L_1(P)$ with respect to g-linearizations are very similar to those of the corresponding space for square matrix polynomials considered in \cite{MacMMM06a} and \cite{DeDM09} with respect to linearizations. The same is true of  $\L_2(P)$ when $P(\lambda)$ is broad, i.e., $m\leq n.$

For the case $m = n,$ the matrix pencils in $\L_1(P)$ and $\L_2(P)$ were originally characterized in \cite{MacMMM06a} by introducing special operations on block matrices called column shifted sums and row shifted sums respectively. We state these definitions with the aim of showing that the same characterizations also hold when $m \neq n.$

\begin{definition}[Column and row shifted sums]
Let $X$ and $Y$ be block matrices \\
$$X=\begin{bmatrix}X_{11}&\dots&X_{1k}\\X_{21}&\dots&X_{2k}\\ \vdots &\ddots&\vdots\\X_{k1}&\dots&X_{kk}\end{bmatrix},Y=\begin{bmatrix}Y_{11}&\dots&Y_{1k}\\Y_{21}&\dots&Y_{2k}\\ \vdots &\ddots&\vdots\\Y_{k1}&\dots&Y_{kk}\end{bmatrix}$$ with blocks $X_{ij}, Y_{ij}\in\mathbb{F}^{m\times n}$ then the operations \\

\begin{eqnarray*}
X~ \columnshiftsum~ Y & := & \begin{bmatrix}X_{11}&\cdots&X_{1k}&0\\X_{21}&\cdots&X_{2k}&0\\ \vdots &\vdots&\ddots&\vdots\\X_{k1}&\dots&X_{kk}&0\end{bmatrix}+\begin{bmatrix}0&Y_{11}&\cdots&Y_{1k}\\0&Y_{21}&\cdots&Y_{2k}\\ \vdots&\vdots &\ddots&\vdots\\0&Y_{k1}&\cdots&Y_{kk}\end{bmatrix}, and \\
 X~ \rowshiftsum~ Y & := & \begin{bmatrix}X_{11}& X_{12} & \dots &X_{1k} \\  \vdots &\vdots&\ddots&\vdots\\X_{k1}& X_{k2} & \cdots & X_{kk} \\ 0 & 0 & \cdots & 0\end{bmatrix}+\begin{bmatrix}0 & 0 & \cdots & 0 \\ Y_{11} & Y_{12} & \cdots & Y_{1k} \\ \vdots & \vdots & \ddots & \vdots \\ Y_{k1} & Y_{k2} & \cdots & Y_{kk}\end{bmatrix},
\end{eqnarray*}
where the zero blocks are also of size $m\times n$ are referred to as the column shifted sum and the row shifted sum of $X$ and $Y$ respectively.
\end{definition}

The above definition immediately gives the following lemma, the proof of which is obvious.
\begin{lemma}
 Let $P(\lambda)=\sum_{i=0}^k \lambda^i A_i$ be an $m\times n$ matrix polynomial of grade $k$ and $L(\lambda)=\lambda X+Y$ be an $km\times kn$ pencil. Then for $v, w \in \mathbb{F}^k$,
 \begin{eqnarray*}
 (\lambda X+Y)(\Lambda_k(\lambda)\otimes I_n) & = & v \otimes P(\lambda) \Leftrightarrow X~\columnshiftsum ~Y=v\otimes\begin{bmatrix}A_k&A_{k-1}&\dots &A_0\end{bmatrix} \\
 (\Lambda_k(\lambda)^T\otimes I_m)(\lambda X+Y) & = & w^T \otimes P(\lambda) \Leftrightarrow X~\rowshiftsum ~Y=w^T\otimes\begin{bmatrix}A_k^T & A_{k-1}^T & \dots & A_0^T\end{bmatrix}^T
 \end{eqnarray*}
\end{lemma}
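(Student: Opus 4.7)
The plan is to verify both equivalences by the straightforward route of expanding both sides of each ansatz identity in powers of $\lambda$ and matching coefficients block by block; the resulting block equations are exactly those encoded by the column (respectively row) shifted sum. Since the statement is symmetric in structure, I will carry out the argument for the first equivalence in detail and then indicate that the second follows by a transposed/dual computation.

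First I would partition $X = [X_{ij}]_{i,j=1}^k$ and $Y = [Y_{ij}]_{i,j=1}^k$ into $m \times n$ blocks consistent with the $km \times kn$ size of $L(\lambda)$, and compute the $i$-th block row of $(\lambda X + Y)(\Lambda_k(\lambda) \otimes I_n)$. Using $\Lambda_k(\lambda) = [\lambda^{k-1}, \dots, \lambda, 1]^T$, this block is
\[
\sum_{j=1}^k \lambda^{k-j+1} X_{ij} + \sum_{j=1}^k \lambda^{k-j} Y_{ij},
\]
so the coefficient of $\lambda^k$ is $X_{i1}$, the coefficient of $\lambda^0$ is $Y_{ik}$, and for $1 \le j \le k-1$ the coefficient of $\lambda^{k-j}$ is $X_{i,j+1} + Y_{i,j}$. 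On the right-hand side, the $i$-th block of $v \otimes P(\lambda)$ has coefficient $v_i A_{k-j}$ at $\lambda^{k-j}$.

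Next I would observe that, by construction, the $i$-th block row of $X \columnshiftsum Y$ is exactly
\[
\bigl[\, X_{i1},\; X_{i2}+Y_{i1},\; \ldots,\; X_{ik}+Y_{i,k-1},\; Y_{ik}\,\bigr],
\]
so equating polynomial coefficients in $\lambda$ block-by-block shows that the ansatz identity holds if and only if this block row equals $v_i \begin{bmatrix} A_k & A_{k-1} & \cdots & A_0 \end{bmatrix}$ for every $i$, which is precisely $X \columnshiftsum Y = v \otimes \begin{bmatrix} A_k & \cdots & A_0 \end{bmatrix}$.

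For the second equivalence, I would perform the analogous expansion of $(\Lambda_k(\lambda)^T \otimes I_m)(\lambda X + Y)$, noting that it produces a block row whose $j$-th block is $\sum_{i=1}^k (\lambda^{k-i+1} X_{ij} + \lambda^{k-i} Y_{ij})$, and match coefficients against $w^T \otimes P(\lambda)$; the resulting block equations are exactly the columns of $X \rowshiftsum Y$. I do not anticipate a real obstacle here: the whole lemma is a bookkeeping statement about how the shifted sum reorganizes the $\lambda$-coefficients of $L(\lambda)(\Lambda_k(\lambda) \otimes I_n)$. The only point that requires a small remark is that the dimensions work identically in the rectangular case because the shifted sums are defined with $m \times n$ zero blocks, so no assumption $m = n$ is ever used; this is precisely what makes the characterization extend verbatim from the square case of~\cite{MacMMM06a}.
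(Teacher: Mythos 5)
Your proof is correct: expanding $(\lambda X+Y)(\Lambda_k(\lambda)\otimes I_n)$ blockwise and matching coefficients of $\lambda^k,\lambda^{k-1},\dots,\lambda^0$ against $v\otimes P(\lambda)$ (and dually for the left ansatz) yields exactly the block equations encoded by the column and row shifted sums, and your block bookkeeping, including the rectangular $m\times n$ blocks, is accurate. The paper omits the proof, calling it an immediate consequence of the definition, and your coefficient-comparison argument is precisely that intended verification.
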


Thus we have an immediate characterization of the spaces $\mathbb{L}_1(P)$ and $\L_2(P)$ in the next theorem the proof of which is omitted as it follows by arguing exactly as in the proof of ~\cite[Theorem 3.5]{MacMMM06a}.

\begin{theorem}\label{charL1}
 Let $P(\lambda)=\sum_{i=0}^k \lambda^i A_i$ be a $m\times n$ matrix polynomial of grade $k$ and $v,w \in \mathbb{F}^k.$ Then the pencils in $\mathbb{L}_1(P)$ with right ansatz vector $v$ consists of all $L(\lambda)=\lambda X+Y$ such that $X=\begin{bmatrix}v\otimes A_k & -W\end{bmatrix}$ and $Y=\begin{bmatrix}W+v\otimes \begin{bmatrix}A_{k-1}&\dots&A_1\end{bmatrix} & v\otimes A_0\end{bmatrix}$ with $W\in \mathbb{F}^{km\times (k-1)n}$ chosen arbitrarily.

 Similarly, the pencils in $\L_2(P)$ with left ansatz vector $w$ are given by $L(\lambda) = \lambda X + Y$ such that $X =  \begin{bmatrix} w^T \otimes A_k \\ -\hat W\end{bmatrix}$ and $Y=\begin{bmatrix} \hat W + w^T\otimes \begin{bmatrix}A_{k-1}^T & \dots & A_1^T\end{bmatrix}^T \\ w^T \otimes A_0\end{bmatrix}$ with $\hat W \in \mathbb{F}^{(k-1)m\times kn}$ chosen arbitrarily.
\end{theorem}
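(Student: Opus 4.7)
The plan is to reduce the theorem to the column/row shifted sum characterization supplied by the preceding lemma, and then solve the resulting block equations column by column. The novelty compared with \cite[Theorem 3.5]{MacMMM06a} is only dimensional bookkeeping, since the blocks of $X$ and $Y$ are now $m \times n$ rather than $n \times n$; the combinatorial content of the argument is unchanged.

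First I would fix $v \in \mathbb{F}^k$ and invoke the lemma to replace the defining identity of $\mathbb{L}_1(P)$ by the block equation
$$X~\columnshiftsum~Y = v \otimes \begin{bmatrix} A_k & A_{k-1} & \cdots & A_0 \end{bmatrix},$$
viewing $X$ and $Y$ as $k \times k$ block matrices with $m \times n$ blocks $X_{ij}, Y_{ij}$. Inspecting the definition of the column shifted sum block-column by block-column, the first block column of the sum is the first block column of $X$, the last block column is the last block column of $Y$, and for $2 \leq j \leq k$ the $j$-th block column equals $X_{\bullet,j} + Y_{\bullet,j-1}$. Matching these against the right-hand side forces
$$X_{\bullet,1} = v \otimes A_k, \qquad Y_{\bullet,k} = v \otimes A_0, \qquad X_{\bullet,j} + Y_{\bullet,j-1} = v \otimes A_{k-j+1} \text{ for } 2 \leq j \leq k.$$
The last set of equations has exactly one degree of freedom per column: choosing the middle block columns of $X$ to be $-W$ for an arbitrary $W \in \mathbb{F}^{km \times (k-1)n}$ determines the first $k-1$ block columns of $Y$ as $W + v \otimes [A_{k-1}\ \cdots\ A_1]$. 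Assembling everything yields the stated form of $X$ and $Y$, and every choice of $W$ produces a valid pencil, so the correspondence is bijective.

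The argument for $\mathbb{L}_2(P)$ is completely analogous with the row shifted sum replacing the column shifted sum and the roles of block rows replacing block columns. The lemma converts membership in $\mathbb{L}_2(P)$ with left ansatz vector $w$ into
$$X~\rowshiftsum~Y = w^T \otimes \begin{bmatrix} A_k^T & A_{k-1}^T & \cdots & A_0^T \end{bmatrix}^T,$$
and matching block rows pins down the first block row of $X$ as $w^T \otimes A_k$, the last block row of $Y$ as $w^T \otimes A_0$, and leaves the remaining $k-1$ block rows of $X$ freely parameterized by $-\hat W$ with $\hat W \in \mathbb{F}^{(k-1)m \times kn}$, after which the corresponding block rows of $Y$ are forced to equal $\hat W + w^T \otimes [A_{k-1}^T\ \cdots\ A_1^T]^T$.

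The only real care needed is that the shifted sums are well defined when $m \neq n$ (the padding zero blocks are $m \times n$, as stated in the definition), so that the block-column and block-row identifications above are dimensionally consistent; this is the one place where the square case and the rectangular case must be compared, and once verified the entire proof of \cite[Theorem 3.5]{MacMMM06a} transfers verbatim. I do not anticipate any genuine obstacle beyond this bookkeeping.
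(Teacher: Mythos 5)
Your proposal is correct and follows exactly the route the paper intends: the paper omits the proof precisely because it reduces, via the shifted-sum lemma, to the block-column (resp.\ block-row) matching argument of \cite[Theorem 3.5]{MacMMM06a}, which is what you carry out, with the only new point being the $m \times n$ block bookkeeping that you correctly address. No gaps.
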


It is clear from Theorem~\ref{charL1} that the vector spaces $\mathbb{L}_1(P)$ and $\mathbb{L}_2(P)$ are completely determined by the pairs $(v, W)$ and $(w, \hat W)$ respectively,
 where $v, w \in \mathbb{F}^k$, $W \in \mathbb{F}^{km\times (k-1)n}$ and $\hat W \in \F^{(k-1)m \times kn}.$ Hence the dimensions of the vector spaces $\mathbb{L}_1(P)$ and $\L_2(P)$ over $\F$ are both equal to $k(k-1)mn+k$. The following immediate corollary of Theorem~\ref{charL1} shows that in particular matrix pencils in $\L_1(P)$ and $\L_2(P)$ with corresponding ansatz vector $\alpha e_1 \in \F^k$ for some non zero scalar $\alpha$ are easy to construct from the coefficient matrices of $P(\lambda).$

\begin{corollary}
 Suppose $L(\lambda)=\lambda X+Y\in \mathbb{L}_1(P)$ with right ansatz vector $v=\alpha e_1$ for $\alpha\neq0$. Then $X=\left[\begin{array}{c|c}\alpha A_k & X_{12}\\ \hline & -Z\end{array}\right]$ and $Y=\left[\begin{array}{c|c} Y_{11}& \alpha A_0\\ \hline Z & \end{array}\right]$ where $X_{12}, Y_{11} \in \F^{m \times (k-1)n}$ satisfy $X_{12} + Y_{11} =
 \alpha\begin{bmatrix}A_{k-1}&\dots&A_1\end{bmatrix}$ and $Z\in\mathbb{F}^{(k-1)m\times (k-1)n}$ is arbitrary.

 Similarly, if $L(\lambda) = \lambda X+Y \in \mathbb{L}_2(P)$ has left ansatz vector $w = \alpha e_1$ for $\alpha\neq0,$ then $X=\left[\begin{array}{c|c}\alpha A_k & \\ \hline \hat X_{12} & -\hat Z\end{array}\right]$ and $Y=\left[\begin{array}{c|c} \hat Y_{11} & \hat Z \\ \hline \alpha A_0 & \end{array}\right]$ where $\hat X_{12}, \hat Y_{11} \in \F^{(k-1)m \times n}$ satisfy $\hat X_{12} + \hat Y_{11} = \alpha \begin{bmatrix}A_{k-1} \\ \vdots \\ A_1 \end{bmatrix}$ and  $\hat Z \in\mathbb{F}^{(k-1)m \times (k-1)n}$ is arbitrary.
\end{corollary}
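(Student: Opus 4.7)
The plan is to derive the corollary as an immediate specialization of Theorem~\ref{charL1} to the ansatz vector $v = \alpha e_1$ (respectively $w = \alpha e_1$), followed by a bookkeeping partition of the free block parameter $W$ (respectively $\hat W$). The entire content of the corollary is a translation of the general $(v, W)$-parameterization into the block form revealed when $v$ has only its first entry nonzero.

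First I would handle the $\mathbb{L}_1(P)$ case. By Theorem~\ref{charL1}, any $L(\lambda)=\lambda X+Y \in \mathbb{L}_1(P)$ with right ansatz vector $v=\alpha e_1$ has
\[
X = \bigl[\,v\otimes A_k \mid -W\,\bigr], \qquad Y = \bigl[\,W + v\otimes [A_{k-1},\dots,A_1]\mid v\otimes A_0\,\bigr],
\]
with $W \in \mathbb{F}^{km \times (k-1)n}$ arbitrary. Substituting $v=\alpha e_1$ collapses every Kronecker product of the form $v\otimes M$ into a block column whose only nonzero entry is the top $m$-row block $\alpha M$. I would then partition the free parameter $W$ along its first $m$ rows as $W = \begin{bmatrix} -X_{12} \\ -Z\end{bmatrix}$, with $X_{12} \in \mathbb{F}^{m\times (k-1)n}$ and $Z \in \mathbb{F}^{(k-1)m\times (k-1)n}$. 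Reading off the four block positions of $X$ and $Y$ from Theorem~\ref{charL1} immediately gives the claimed forms, with $Y_{11} = -X_{12} + \alpha[A_{k-1},\dots,A_1]$, i.e.\ $X_{12}+Y_{11} = \alpha[A_{k-1},\dots,A_1]$, and with $Z$ unconstrained since $W$ was unconstrained.

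The $\mathbb{L}_2(P)$ case is completely analogous, just transposed: I would apply the second half of Theorem~\ref{charL1} with $w=\alpha e_1$, so every Kronecker product $w^T \otimes M$ becomes a block row whose only nonzero entry is the leftmost $n$-column block $\alpha M$. Partitioning $\hat W \in \mathbb{F}^{(k-1)m \times kn}$ along its first $n$ columns as $\hat W = \bigl[\,-\hat X_{12} \mid -\hat Z\,\bigr]$ with $\hat X_{12} \in \mathbb{F}^{(k-1)m\times n}$ and $\hat Z \in \mathbb{F}^{(k-1)m\times (k-1)n}$ again produces the stated block layout, with $\hat X_{12} + \hat Y_{11} = \alpha[A_{k-1}^T,\dots,A_1^T]^T$ and $\hat Z$ arbitrary.

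There is no real obstacle here; the only thing that requires care is choosing the signs in the partition of $W$ (and $\hat W$) so that the free blocks match the variables $X_{12}, Z, \hat X_{12}, \hat Z$ appearing in the corollary rather than their negatives, and keeping the block dimensions consistent with $km \times kn$ throughout. Once the partitioning convention is fixed, the identities $X_{12}+Y_{11}=\alpha[A_{k-1},\dots,A_1]$ and $\hat X_{12}+\hat Y_{11}=\alpha[A_{k-1}^T,\dots,A_1^T]^T$ are immediate, and the arbitrariness of $Z$ and $\hat Z$ is inherited directly from the arbitrariness of $W$ and $\hat W$ in Theorem~\ref{charL1}.
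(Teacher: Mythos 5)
Your proposal is correct and is exactly the route the paper intends: the corollary is stated as an immediate consequence of Theorem~\ref{charL1}, obtained by setting $v=\alpha e_1$ (resp.\ $w=\alpha e_1$) and partitioning the free block $W$ (resp.\ $\hat W$) into its first $m$ rows (resp.\ first $n$ columns) and the rest. The only nit is that to match the corollary's labels literally the partition should be $W=\begin{bmatrix}-X_{12}\\ Z\end{bmatrix}$ (and $\hat W=\begin{bmatrix}-\hat X_{12} & \hat Z\end{bmatrix}$) rather than with $-Z$, $-\hat Z$, but since $Z$ and $\hat Z$ are arbitrary this is a harmless renaming you already flagged.
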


Given an $m \times n$ matrix polyomial $P(\lambda),$ it is easy to see that
\begin{equation}\label{l1tol2}
L(\lambda) \in \L_2(P) \Leftrightarrow L(\lambda)^T \in \L_1(P^T).
\end{equation}

Therefore, the results in the rest of the paper for $\L_1(P)$ where $P(\lambda)$ is of size $m \times n$ with $m \geq n,$ give rise to corresponding results for $\L_2(P)$ when $m \leq n$ with appropriate modifications. We provide proofs only for the statements concerning $\L_1(P)$ as the corresponding statements for $\L_2(P)$ follow either by using the correspondence~\eqref{l1tol2} or by similar independent arguments. The first among these is an analog of ~\cite[Theorem 4.1]{DeDM09}, that gives a sufficient condition for a pencil in $\L_1(P)$ (respectively, $\L_2(P)$) to be a strong g-linearization of $P(\lambda)$ when $m \geq n$ (respectively, $m \leq n$).

\begin{theorem}\label{Zthm}
 Let $P(\lambda)=\sum_{i=0}^k \lambda^i A_i$ be an $m\times n$ matrix polynomial. If $m \geq n$ and $L(\lambda)\in \mathbb{L}_1(P)$ with right ansatz vector $v \in \F^k \setminus \{ 0 \},$ then for any nonsingular $M\in\mathbb{F}^{k\times k}$ such that $Mv=\alpha e_1$ for some $\alpha \neq 0,$ the pencil $(M\otimes I_m)L(\lambda)$ satisfies
 \begin{equation}\label{Zrank} (M\otimes I_m)L(\lambda)=\lambda\left[\begin{array}{c|c}\alpha A_k & X_{12}\\ \hline & -Z\end{array}\right]+\left[\begin{array}{c|c} Y_{11}& \alpha A_0\\ \hline Z & \end{array}\right]\end{equation}
 with $Z\in\mathbb{F}^{(k-1)m\times (k-1)n}$. If $Z$ is of full rank, i.e., $\rank \, Z = (k-1)n,$ then $L(\lambda) \in \L_1(P)$  is a strong g-linearization of $P(\lambda)$.
\end{theorem}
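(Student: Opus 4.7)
The plan is first to reduce to the case of right ansatz vector $\alpha e_1$, then to write down explicit unimodular column and row operations that transform the pencil into the g-linearization normal form, and finally to deduce the strong part from the g-linearization part applied to the reversal pencil.

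Applying $(M\otimes I_m)$ to the ansatz identity $L(\lambda)(\Lambda_k(\lambda)\otimes I_n)=v\otimes P(\lambda)$ shows that $(M\otimes I_m)L(\lambda)\in\mathbb{L}_1(P)$ with right ansatz vector $Mv=\alpha e_1$, so the block form~(\ref{Zrank}) is immediate from the previous Corollary applied to ansatz vector $\alpha e_1$; this proves the first claim of the theorem. Writing $\tilde L(\lambda):=(M\otimes I_m)L(\lambda)$, since $M\otimes I_m$ is a constant invertible matrix it suffices to prove that $\tilde L(\lambda)$ is a strong g-linearization of $P(\lambda)$.

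Assume $Z$ has full column rank $(k-1)n$. I would first apply the unimodular $kn\times kn$ column operation $F_1(\lambda)$ whose first $k-1$ block columns are the $n\times n$ identity and whose last block column is $\Lambda_k(\lambda)\otimes I_n$; by the ansatz identity the last block column of $\tilde L(\lambda)F_1(\lambda)$ becomes $\alpha e_1\otimes P(\lambda)$, i.e.\ $\alpha P(\lambda)$ on top and zeros below, while the first $(k-1)n$ columns are unchanged from $\tilde L(\lambda)$. Splitting those first $(k-1)n$ columns as a top block $G(\lambda)\in\mathbb{F}[\lambda]^{m\times(k-1)n}$ over a bottom block $H(\lambda)\in\mathbb{F}[\lambda]^{(k-1)m\times(k-1)n}$, a block-column computation using the block form of $\tilde L(\lambda)$ gives $H(\lambda)=ZV(\lambda)$, where $V(\lambda)$ is the unimodular block-bidiagonal $(k-1)n\times(k-1)n$ matrix with $I_n$ on the diagonal and $-\lambda I_n$ on the superdiagonal. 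A second unimodular column operation $F_2(\lambda):=V(\lambda)^{-1}\oplus I_n$ then reduces the bottom-left block to $Z$ itself. Since $m\ge n$ and both $Z$ and $I_{k-1}\otimes I_{m,n}$ lie in $\mathbb{F}^{(k-1)m\times(k-1)n}$ with full column rank $(k-1)n$, there exists a constant invertible $N\in\mathbb{F}^{(k-1)m\times(k-1)m}$ with $NZ=I_{k-1}\otimes I_{m,n}$, and the constant row operation $I_m\oplus N$ achieves this substitution. Finally, the nonzero rows of $I_{k-1}\otimes I_{m,n}$ are the $(k-1)n$ standard basis rows of $\mathbb{F}^{(k-1)n}$, so a polynomial row operation of the form $I_m\oplus I_{(k-1)m}$ modified in the top-right block by $-G(\lambda)V(\lambda)^{-1}(I_{k-1}\otimes I_{m,n}^T)$ subtracts appropriate polynomial combinations of the lower rows from the top $m$ rows, zeroing out the remaining polynomial block above $I_{k-1}\otimes I_{m,n}$. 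A constant column permutation that swaps the last $n$ columns with the first $(k-1)n$ columns, together with the scaling $\alpha^{-1}I_m\oplus I_{(k-1)m}$ on the left, produces the normal form $\mathrm{diag}(P(\lambda),\,I_{k-1}\otimes I_{m,n})$, which is precisely the g-linearization definition.

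For the strong part I would prove that $\mathrm{rev}_1L(\lambda)$ is a g-linearization of $\mathrm{rev}_kP(\lambda)$. Substituting $\lambda\mapsto 1/\lambda$ in the ansatz and multiplying by $\lambda^k$ yields $\mathrm{rev}_1L(\lambda)(R\otimes I_n)(\Lambda_k(\lambda)\otimes I_n)=v\otimes \mathrm{rev}_kP(\lambda)$, where $R$ is the $k\times k$ exchange matrix, so $L'(\lambda):=\mathrm{rev}_1L(\lambda)(R\otimes I_n)$ lies in $\mathbb{L}_1(\mathrm{rev}_kP)$ with the \emph{same} right ansatz vector $v$. A direct column-by-column inspection of the passage $\lambda X+Y\mapsto\lambda Y+X$ followed by block-column reversal shows that the bottom-right block of $(M\otimes I_m)L'(\lambda)$, written in the form~(\ref{Zrank}), equals $-Z(R_{k-1}\otimes I_n)$, which has the same column rank as $Z$. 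The g-linearization part of the theorem just proved, applied now to $L'$ and $\mathrm{rev}_kP$, gives that $L'$ is a g-linearization of $\mathrm{rev}_kP$, and since $R\otimes I_n$ is a constant permutation this lifts to $\mathrm{rev}_1L(\lambda)$.

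The main technical obstacle will be the bookkeeping for the identity $H(\lambda)=ZV(\lambda)$: the Corollary splits the columns of $X$ and $Y$ asymmetrically, as $n\,|\,(k-1)n$ versus $(k-1)n\,|\,n$, so the powers of $\lambda$ generated when $F_1(\lambda)$ is applied must be tracked block by block to confirm that every mixed $\lambda^jZ^{(i)}$ term cancels and only the telescoping pattern remains. A smaller but analogous subtlety arises in the strong part when recomputing the bottom-right block, where the column-block reversal $R_{k-1}\otimes I_n$ and an overall sign must both be accounted for; after these two calculations everything else reduces to constant row/column reductions driven by the full-rank hypothesis on $Z$.
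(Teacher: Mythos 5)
Your proposal is correct and follows essentially the same route as the paper's proof: reduce to ansatz vector $\alpha e_1$ via $M\otimes I_m$, use the ansatz identity to create the block column $\alpha e_1\otimes P(\lambda)$ with a unimodular column transformation (your $F_1(\lambda)$ is exactly the paper's $G(\lambda)$, and your $V(\lambda)^{-1}$ packages the paper's successive elementary column operations), exploit full column rank of $Z$ to eliminate the remaining polynomial block and reduce $Z$ to $I_{k-1}\otimes I_{m,n}$ by constant equivalences, and obtain the strong part from $\mathrm{rev}_1L(\lambda)(R_k\otimes I_n)\in\mathbb{L}_1(\mathrm{rev}_kP)$ with Z-matrix $-Z(R_{k-1}\otimes I_n)$. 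The only deviations (your $NZ=I_{k-1}\otimes I_{m,n}$ plus the $I_{m,n}^T$-based row elimination versus the paper's $W(\lambda)Z^\dagger$ elimination followed by $Z=E(I_{k-1}\otimes I_{m,n})F$) are cosmetic and all steps check out.
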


\begin{proof}
We first prove the theorem for the case that $v = \alpha e_1$ for some $\alpha \neq 0.$ Then $$L(\lambda)=\lambda\left[\begin{array}{c|c}\alpha A_k & X_{12}\\ \hline & -Z\end{array}\right]+\left[\begin{array}{c|c} Y_{11}& \alpha A_0\\ \hline Z & \end{array}\right] = \lambda X + Y (say),$$ where $X_{12}, Y_{11} \in \F^{m \times (k-1)n}$ satisfy $X_{12} + Y_{11} =
 \alpha\begin{bmatrix}A_{k-1}&\dots&A_1\end{bmatrix}$ and $Z\in\mathbb{F}^{(k-1)m\times (k-1)n}$ is arbitrary.
% Let,
%  $$T(\lambda)=\begin{bmatrix} 1 & \lambda & \dots & \lambda^{k-1}\\ & 1 & &\vdots \\ & & \ddots & \lambda \\ & & & 1\end{bmatrix}\otimes I_n
%  \quad \text{and} \quad G(\lambda)=\begin{bmatrix} 1 & 0 & \dots & \lambda^{k-1}\\ & \ddots & &\vdots \\ & & 1 & \lambda \\ & & & 1\end{bmatrix}\otimes I_n.$$
%  Then $T(\lambda)=G(\lambda)\begin{bmatrix}I_n & \lambda I_n &  &  & \\ & I_n &  &  & \\ & &I_n &  &\\ & & &\ddots &\\&&&&I_n\end{bmatrix}
%  \begin{bmatrix}I_n & &  &  & \\ & I_n & \lambda I_n &  & \\ & &I_n &  &\\ & & &\ddots &\\&&&&I_n\end{bmatrix} \dots
%  \begin{bmatrix}I_n &  &  &  & \\ & \ddots &  &  & \\ & &I_n & \lambda I_n  &\\ & & & I_n & \\&&&& I_n\end{bmatrix}$.\\
 Partitioning $Z$ as
 $Z=\begin{bmatrix}Z_1& Z_2 & \dots & Z_{k-1}\end{bmatrix} \mbox{ where } Z_i \in \mathbb{F}^{(k-1)m \times n},$ and setting $$G(\lambda)=\begin{bmatrix} 1 & 0 & \dots & \lambda^{k-1}\\ & \ddots & &\vdots \\ & & 1 & \lambda \\ & & & 1\end{bmatrix}\otimes I_n,$$ we have,
 \begin{eqnarray*}
 L(\lambda)G(\lambda)&=&\begin{bmatrix}* & * & \dots& *& *\\Z_1 & (Z_2-\lambda Z_1) & \dots &(Z_{k-1}-\lambda Z_{k-2})& -\lambda Z_{k-1}\end{bmatrix}G(\lambda)\\
 &=&\begin{bmatrix}* & * & \dots& *& \alpha P(\lambda)\\Z_1 & (Z_2-\lambda Z_1) & \dots &(Z_{k-1}-\lambda Z_{k-2})& 0 \end{bmatrix}.
 \end{eqnarray*}
 Now,
 \begin{eqnarray*}
  %L(\lambda)T(\lambda)&=&
  &&L(\lambda)G(\lambda)
  \begin{bmatrix}I_n & \lambda I_n &  &  & \\ & I_n &  &  & \\ & &I_n &  &\\ & & &\ddots &\\&&&&I_n\end{bmatrix}
 \begin{bmatrix}I_n & &  &  & \\ & I_n & \lambda I_n &  & \\ & &I_n &  &\\ & & &\ddots &\\&&&&I_n\end{bmatrix} \dots
 \begin{bmatrix}I_n &  &  &  & \\ & \ddots &  &  & \\ & &I_n & \lambda I_n  &\\ & & &I_n&\\&&&&I_n\end{bmatrix}\\
&=& \begin{bmatrix}* & * & \dots& *& \alpha P(\lambda)\\Z_1 & Z_2 & \dots &(Z_{k-1}-\lambda Z_{k-2})& 0 \end{bmatrix}
 \begin{bmatrix}I_n & &  &  & \\ & I_n & \lambda I_n &  & \\ & &I_n &  &\\ & & &\ddots &\\&&&&I_n\end{bmatrix}\dots
 \begin{bmatrix}I_n &  &  &  & \\ & \ddots &  &  & \\ & &I_n & \lambda I_n  &\\ & & &I_n&\\&&&&I_n\end{bmatrix}\\
 &=&\begin{bmatrix}* & * & \dots& *& \alpha P(\lambda)\\Z_1 & Z_2 & \dots &Z_{k-1}& 0 \end{bmatrix}\\
 &=& \left[\begin{array}{c|c}* & \alpha P(\lambda)\\ \hline Z &  \end{array}\right].
 \end{eqnarray*}
Therefore there exist a unimodular matrix $F(\lambda)$ such that
\begin{equation}\label{F-stage}L(\lambda)F(\lambda)=\left[\begin{array}{c|c}  P(\lambda) & W(\lambda)\\ \hline  & Z \end{array}\right]\text{ for some $W(\lambda) \in \F[\lambda]^{m \times (k-1)n}.$}\end{equation}

\noindent If $Z$ is of full rank, then $Z^\dagger Z=I_{(k-1)n}.$ Therefore, $$\left[\begin{array}{c|c} I_m & -W(\lambda)Z^\dagger \\ \hline & I_{(k-1)m} \end{array}\right]L(\lambda)F(\lambda)=\left[\begin{array}{c c}  P(\lambda) & \\  & Z \end{array}\right].$$
As $\mathrm{rank} \, Z=(k-1)n=\mathrm{rank} \, (I_{k-1}\otimes I_{m,n})$, there exist invertible matrices
$E\in\mathbb{F}^{(k-1)m\times (k-1)m}$ and $F\in\mathbb{F}^{(k-1)n\times (k-1)n}$ such that $Z=E(I_{k-1}\otimes I_{m,n})F.$
This implies that $L(\lambda)$ is a g-linearization of $P(\lambda)$ as, $$\left[\begin{array}{c|c} I_m & -W(\lambda)Z^\dagger \\ \hline  & I_{(k-1)m} \end{array}\right]L(\lambda)F(\lambda)=\left[\begin{array}{c c}  I_m & \\   & E \end{array}\right]\left[\begin{array}{cc}  P(\lambda) & \\  & I_{k-1}\otimes I_{m,n} \end{array}\right]\left[\begin{array}{cc}  I_n & \\  & F \end{array}\right].$$
%To show $L(\lambda)$ is a strong g-linearization of $P(\lambda)$, it remains to show that $\mathrm{rev}_1L(\lambda)$ is a g-linearization for $\mathrm{rev}_kP(\lambda)$.\\
To show that $L(\lambda)$ is a strong g-linearization of $P(\lambda)$, notice that $$\lambda^{k-1}\Lambda_k(1/\lambda)=\begin{bmatrix}1& \lambda & \dots& \lambda^{k-1}\end{bmatrix}^T=R_k\Lambda_k(\lambda), \mbox{ where } R_k=\begin{bmatrix}&&1\\&\text{\reflectbox{$\ddots$}}&\\1&&\end{bmatrix}_{k \times k}.$$
As $L(\lambda) \in \L_1(P)$ with corresponding right ansatz vector $\alpha e_1,$ we have
%&\Rightarrow& L(1/\lambda)(\Lambda_k(1/\lambda)\otimes I_n)=\alpha e_1\otimes P(1/\lambda)\\
%&\Rightarrow& \lambda^kL(1/\lambda)(\Lambda_k(1/\lambda)\otimes I_n)=\alpha e_1\otimes \lambda^kP(1/\lambda)\\
%&\Rightarrow& \lambda L(1/\lambda)(\lambda^{k-1}\Lambda_k(1/\lambda)\otimes I_n)=\alpha e_1\otimes \mathrm{rev}_k P(\lambda)\\
$$\mathrm{rev}_1L(\lambda)(R_k\Lambda_k(\lambda)\otimes I_n)=\alpha e_1\otimes \mathrm{rev}_kP(\lambda)
 \Rightarrow \underbrace{\mathrm{rev}_1L(\lambda)(R_k \otimes I_n)}_{=: \tilde L(\lambda)}(\Lambda_k(\lambda) \otimes I_n) = \alpha e_1\otimes \mathrm{rev}_kP(\lambda).$$
%consider $\tilde L(\lambda):=\lambda \tilde X+\tilde Y:=\mathrm{rev}_1L(\lambda)(R_k\otimes I_n).$
%Then, $$\tilde L(\lambda)(\Lambda_k(\lambda)\otimes I_n)=\alpha e_1\otimes \mathrm{rev}_kP(\lambda)
%\Rightarrow \tilde L(\lambda)\in \mathbb{L}_1(\mathrm{rev}_kP).$$
%The proof is complete if we show that $\tilde L(\lambda)=\lambda \tilde X+\tilde Y$ is a g-linearization of $\mathrm{rev}_kP$ where
Therefore $\tilde L(\lambda)=\lambda \tilde X+\tilde Y \in \L_1(\mathrm{rev} P)$, where $$\tilde X= Y(R_k\otimes I_n) = \left[\begin{array}{c|c}\alpha A_0&\tilde X_{12}\\ \hline &-\tilde Z\end{array}\right]  \text{ and } \tilde Y=X(R_k\otimes I_n)
= \left[\begin{array}{c|c}\tilde Y_{11}&\alpha A_k\\ \hline \tilde Z&\end{array}\right]$$
with $\tilde Z=-Z(R_{k-1}\otimes I_n)$ which is of full rank if $Z$ is of full rank. Hence $\tilde L(\lambda)$ is a g-linearization of $\mathrm{rev}_kP(\lambda)$ and consequently $\mathrm{rev}_1L(\lambda)$ is a g-linearization of $\mathrm{rev}_kP(\lambda)$, this completes the proof for the case that $v = \alpha e_1.$

Now let $L(\lambda) \in \L_1(P)$ with corresponding nonzero right ansatz vector $v \in \F^k.$ From \eqref{Zrank} it follows that $L(\lambda)$ is a strong g-linearization of $P(\lambda)$ if and only if the pencil $$\hat{L}(\lambda) := \lambda\left[\begin{array}{c|c}\alpha A_k & X_{12}\\ \hline & -Z\end{array}\right]+\left[\begin{array}{c|c} Y_{11}& \alpha A_0\\ \hline Z & \end{array}\right]$$ is a strong g-linearization of $P(\lambda).$ Clearly, $\hat{L}(\lambda) \in \L_1(P)$ with corresponding right ansatz vector $\alpha e_1.$ Since $\rank \, Z = (k-1)n,$ by the first part of the proof, it follows that $\hat{L}(\lambda)$ is a strong g-linearization of $P(\lambda)$ and this completes the proof.
\end{proof}

The corresponding theorem for $\L_2(P)$ is as follows.

\begin{theorem}\label{Zthm2}  Let $P(\lambda)=\sum_{i=0}^k \lambda^i A_i$ be an $m\times n$ matrix polynomial.
If, $m \leq n$ and $L(\lambda)\in \mathbb{L}_2(P)$ with left ansatz vector $w \in \F^k \setminus \{ 0 \},$  then for any nonsingular $\hat M \in\mathbb{F}^{k\times k}$ such that $\hat M w = \alpha e_1$ for some $\alpha \neq 0,$ the pencil $L(\lambda)(\hat M^T \otimes I_n)$ satisfies
\begin{equation}\label{hZrank}
L(\lambda)(\hat M^T \otimes I_n) =\left[\begin{array}{c|c}\alpha A_k & \\ \hline \hat X_{12} & -\hat Z\end{array}\right] + \left[\begin{array}{c|c} \hat Y_{11} & \hat Z \\ \hline \alpha A_0 & \end{array}\right],
\end{equation}
with $\hat Z \in \F^{(k-1)m \times (k-1)n}.$
   If $\hat Z$ is of full rank, i.e., $\rank \, \hat Z = (k-1)m,$ then $L(\lambda) \in \L_2(P)$  is a strong g-linearization of $P(\lambda)$.
\end{theorem}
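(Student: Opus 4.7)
The plan is to deduce Theorem~\ref{Zthm2} directly from Theorem~\ref{Zthm} via the transpose correspondence $L(\lambda) \in \L_2(P) \Leftrightarrow L(\lambda)^T \in \L_1(P^T)$ established in~\eqref{l1tol2}. Given $L(\lambda) \in \L_2(P)$ with left ansatz vector $w$, this correspondence yields $L(\lambda)^T \in \L_1(P^T)$ with right ansatz vector $w$. Since $P(\lambda)$ is $m \times n$ with $m \leq n$, its transpose $P^T(\lambda)$ is an $n \times m$ polynomial with $n \geq m$, so the hypotheses of Theorem~\ref{Zthm} are satisfied for $L(\lambda)^T$.

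For a nonsingular $\hat M \in \F^{k \times k}$ with $\hat M w = \alpha e_1$, I would apply Theorem~\ref{Zthm} to $L(\lambda)^T$ with the same matrix $\hat M$, obtaining the block decomposition of $(\hat M \otimes I_n) L(\lambda)^T$ with some $X_{12}', Y_{11}'$ and some $Z' \in \F^{(k-1)n \times (k-1)m}$ in the lower block positions dictated by~\eqref{Zrank}. Transposing both sides and using $(\hat M \otimes I_n)^T = \hat M^T \otimes I_n$ gives precisely the required form~\eqref{hZrank} for $L(\lambda)(\hat M^T \otimes I_n)$ with $\hat X_{12} = (X_{12}')^T$, $\hat Y_{11} = (Y_{11}')^T$, and $\hat Z = (Z')^T \in \F^{(k-1)m \times (k-1)n}$. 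The full-rank hypothesis $\rank \hat Z = (k-1)m$ then translates exactly into the full-rank condition $\rank Z' = (k-1)m$ demanded by Theorem~\ref{Zthm} for the $n \times m$ polynomial $P^T$.

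Under this full-rank condition, Theorem~\ref{Zthm} provides unimodular $E(\lambda) \in \F[\lambda]^{nk \times nk}$ and $F(\lambda) \in \F[\lambda]^{mk \times mk}$ such that $E(\lambda) L(\lambda)^T F(\lambda)$ equals the canonical block diagonal form $\mathrm{diag}(P^T(\lambda),\, I_{k-1} \otimes I_{n,m})$, together with the analogous identity witnessing $\rev_1 L(\lambda)^T$ as a g-linearization of $\rev_k P^T(\lambda)$. Transposing both identities, and invoking the elementary observation that $I_{n,m}^T = I_{m,n}$ (immediate from the three cases in the definition of $I_{m,n}$), yields unimodular equivalences that exhibit $L(\lambda)$ as a g-linearization of $P(\lambda)$ and $\rev_1 L(\lambda)$ as a g-linearization of $\rev_k P(\lambda)$, hence $L(\lambda)$ is a strong g-linearization of $P(\lambda)$.

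The only real obstacle is careful bookkeeping: one must verify that the transposition of the block pattern of~\eqref{Zrank} applied to $P^T$ lands the zero corner in the position prescribed by~\eqref{hZrank}, and one must check the small but essential identity $I_{n,m}^T = I_{m,n}$ so that the identity block appearing in the canonical form for $P^T$ becomes the identity block appearing in the canonical form for $P$ after transposition. Once these are in place, no new ideas beyond the content of Theorem~\ref{Zthm} and~\eqref{l1tol2} are needed.
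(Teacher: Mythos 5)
Your proposal is correct and follows essentially the route the paper itself indicates: the paper omits an explicit proof of Theorem~\ref{Zthm2}, stating that the $\L_2(P)$ statements follow from their $\L_1(P)$ counterparts via the correspondence~\eqref{l1tol2}, which is exactly the transpose argument you carry out (including the needed observations that $\hat Z = (Z')^T$ has rank $(k-1)m$ precisely when $Z'$ has full rank for the $n\times m$ polynomial $P^T$, and that $I_{n,m}^T = I_{m,n}$). No gaps; your bookkeeping matches what the paper leaves to the reader.
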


It was proved in \cite[Theorem 4.1 and Theorem 4.3]{MacMMM06a} that if $P(\lambda)$ is a square regular polynomial, then $L(\lambda) \in \L_1(P)$ is a strong linearization of $P(\lambda)$ if and only if the matrix in the position of the block labelled $Z$ in \eqref{Zrank} is nonsingular. However as shown in \cite[Example 2]{DeDM09}, the same is not a necessary condition for $L(\lambda)$ to be a strong linearization of $P(\lambda)$ if it is square but not regular. The following simple modification of that example shows that if $P(\lambda)$ is an $m \times n$ matrix polynomial with $m\geq n,$ then $L(\lambda) \in \L_1(P)$ with corresponding nonzero right ansatz vector $v \in \F^k,$ can be a strong g-linearization of $P(\lambda)$
even if the matrix labelled $Z$ in \eqref{Zrank} is rank deficient.

\begin{example} \small{ Let $P(\lambda) = \lambda^2 A_2$ where $A_2 = \left[\begin{array}{cc} 1 & 0 \\ 0 & 0 \\ 0 & 0 \end{array}\right].$ Then $L(\lambda) = \lambda \left[\begin{array}{cc} A_2 & -\hat{X} \\ 0 & -Z \end{array}\right] + \left[\begin{array}{cc} \hat{X} & 0 \\ Z & 0 \end{array}\right]$ where $$\hat{X} = \left[\begin{array}{cc} 0 & 0 \\ 0 & -1 \\ 0 & 0 \end{array} \right], \quad Z = \left[\begin{array}{cc} -1 & 0 \\ 0 & 0 \\ 0 & 0 \end{array} \right]$$ belongs to $\L_1(P)$ with right ansatz vector $e_1.$ Although $\rank \, Z = 1,$ interchanging the second and fifth rows of $L(\lambda)$ gives $C_1^g(\lambda)$ which is a strong g-linearization of $P(\lambda).$}
\end{example}

Since the matrix in the block labelled $Z$ in the reduction \eqref{Zrank} of $L(\lambda) \in \L_1(P)$ plays an important role in determining whether $L(\lambda)$ is a g-linearization of $P(\lambda),$ we refer to it as the $Z$-matrix of $L(\lambda)$ with respect to the pair $(M, \alpha)$ as it may vary depending on the choice of the nonsingular matrix $M$ satisfying $Mv = \alpha e_1.$ Therefore it is important to know whether its rank can change with change in the choice of $M.$ The next theorem shows that this does not happen, i.e., the rank of the $Z$-matrix
in a given $L(\lambda) \in \L_1(P),$ remains invariant of the choice of $M.$ The proof of the theorem is omitted as it follows by arguing exactly as in the proof of \cite[Lemma 4.2]{DeDM09}.

\begin{theorem}
Let $P(\lambda)= \sum_{i=0}^k \lambda^i A_i$ be an $m\times n$  matrix polynomial with $m \geq n$ and $L(\lambda)=\lambda X+Y\in\mathbb{L}_1(P)$ with right ansatz vector $v\neq0$. Suppose that $M_1,M_2\in\mathbb{F}^{k\times k}$ are two nonsingular matrices such that $M_1v=\alpha_1e_1$ and $M_2v=\alpha_2e_1$ for some $\alpha_1\neq0,$ and $\alpha_2\neq0.$ If $Z_1,Z_2\in\mathbb{F}^{(k-1)m\times (k-1)n}$ are the matrices in the block labelled $Z$ in~\eqref{Zrank} corresponding to the pairs  $(M_1,\alpha_1)$ and $(M_2,\alpha_2)$ respectively, then $\rank \, Z_1=\rank \, Z_2$.
\end{theorem}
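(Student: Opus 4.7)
The plan is to reduce the comparison between the two pairs $(M_1,\alpha_1)$ and $(M_2,\alpha_2)$ to an elementary block-matrix manipulation, by observing that the transition matrix $N := M_2 M_1^{-1}$ is forced to have very special structure. Since $N(\alpha_1 e_1) = M_2 v = \alpha_2 e_1$, the first column of $N$ is $(\alpha_2/\alpha_1) e_1$, so $N$ is block upper-triangular with respect to the $1 + (k-1)$ partition, with $(1,1)$ entry $\alpha_2/\alpha_1$ and $(2,2)$ block some $(k-1)\times(k-1)$ matrix $\tilde N$ that is nonsingular because $N$ is.

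Next I would use the identity
\[
(M_2 \otimes I_m) L(\lambda) \;=\; (N \otimes I_m)\bigl((M_1 \otimes I_m) L(\lambda)\bigr)
\]
to relate the two block decompositions of the form \eqref{Zrank}. Because $N \otimes I_m$ is itself block upper-triangular with $(1,1)$ block $(\alpha_2/\alpha_1) I_m$ and $(2,2)$ block $\tilde N \otimes I_m$, and because the bottom block rows of the coefficient matrices of $(M_1 \otimes I_m) L(\lambda)$ are exactly $[\,0 \mid -Z_1\,]$ and $[\,Z_1 \mid 0\,]$, the bottom block rows of the coefficient matrices of $(M_2 \otimes I_m) L(\lambda)$ become $[\,0 \mid -(\tilde N \otimes I_m) Z_1\,]$ and $[\,(\tilde N \otimes I_m) Z_1 \mid 0\,]$. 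Matching this with the form \eqref{Zrank} for the pair $(M_2,\alpha_2)$ yields $Z_2 = (\tilde N \otimes I_m) Z_1$, and since $\tilde N \otimes I_m$ is invertible this gives $\rank Z_2 = \rank Z_1$.

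The only place where one has to be a little careful is in verifying that the block structure of \eqref{Zrank} genuinely persists under left-multiplication by $N \otimes I_m$, in particular that the zero block in the lower-left of the coefficient of $\lambda$ really remains zero. This is immediate from the block upper-triangularity of $N \otimes I_m$, but it is the only point where the specific Kronecker-product structure of the left multiplier (rather than an arbitrary invertible matrix) is essential. Beyond this, the argument is purely bookkeeping on the $1 + (k-1)$ block partition, which is exactly why the author refers to the parallel proof in \cite[Lemma 4.2]{DeDM09}.
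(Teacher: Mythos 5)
Your argument is correct, and it is precisely the argument the paper has in mind: the paper omits the proof by pointing to \cite[Lemma 4.2]{DeDM09}, whose proof is exactly this transition-matrix computation, writing $N = M_2M_1^{-1}$, noting $Ne_1=(\alpha_2/\alpha_1)e_1$ forces block upper-triangularity, and concluding $Z_2=(\tilde N\otimes I_m)Z_1$ with $\tilde N$ nonsingular, hence equal ranks. Your attention to the persistence of the zero block under left multiplication by $N\otimes I_m$ is the right point to check and is handled correctly.
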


In a similar way it can also be shown that if the $m \times n$ matrix polynomial $P(\lambda)$ satisfies $m \leq n,$ the rank of the matrix labelled $\hat Z$ in the reduction~\eqref{hZrank} is independent of the choice of the nonsingular matrix $\hat M.$  The above result allows us to make the following definition.

\begin{definition}
  For an $m \times n$ matrix polynomial $P(\lambda)$ with $m \geq n,$ (respectively $m \leq n,$) the $Z$-rank of $L(\lambda)\in \mathbb{L}_1(P)$ (respectively, $L(\lambda) \in \L_2(P)$) is the rank of any matrix appearing in the block labelled $Z$ (respectively, $\hat Z$) under any reduction of $L(\lambda)$ of the form \eqref{Zrank} (respectively, \eqref{hZrank}). If $Z$ (respectively, $\hat Z$) in \eqref{Zrank} (respectively, \eqref{hZrank})  is of full rank, then we say that $L(\lambda)\in\mathbb{L}_1(P)$ (respectively, $L(\lambda)\in\mathbb{L}_2(P)$) has full $Z$-rank.
\end{definition}

The final result of this section shows that for a given $m \times n$ matrix polynomial $P(\lambda)$ with $m \geq n,$ almost every pencil in $\L_1(P)$ is a g-linearization of $P(\lambda).$

%\subsection{g-Linearizations are generic in $\mathbb{L}_1(P)$ }

\begin{theorem}{\rm {\bf (Genericity of g-linearizations in $\L_1(P)$ and $\L_2(P)$)}}
 For any $m\times n$ matrix polynomial $P(\lambda)$ of grade $k$ with $m \geq n,$ (respectively, $m \leq n,$) almost every pencil in $\mathbb{L}_1(P)$ (respectively, $\L_2(P)$) is a strong g-linearization of $P(\lambda)$.
\end{theorem}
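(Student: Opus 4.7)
The plan is to apply Theorem~\ref{Zthm} and show that the locus of $L \in \mathbb{L}_1(P)$ with nonzero right ansatz vector and full $Z$-rank is a Zariski-open, nonempty subset of $\mathbb{L}_1(P)$; its complement will then be contained in the zero set of a nonzero polynomial on the affine space $\mathbb{L}_1(P)$, hence have Lebesgue measure zero, giving the ``almost every'' claim.

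First I would use Theorem~\ref{charL1} to identify $\mathbb{L}_1(P)$ with the affine space $\mathbb{F}^{k} \times \mathbb{F}^{km \times (k-1)n}$ via the bijection $(v, W) \mapsto L$. Non-emptiness of the target set is immediate: the companion form $C_1^g(\lambda)$ corresponds to $v = e_1$ together with a specific $W_0$ for which the $Z$-block equals $I_{k-1} \otimes I_{m,n}$, which has full column rank $(k-1)n$ because $m \geq n$; by Theorem~\ref{Zthm}, $C_1^g(\lambda)$ is therefore a strong g-linearization and lies in the desired set.

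For Zariski-openness I would argue patchwise on the open cover $U_i := \{(v,W) : v_i \neq 0\}$, $i = 1, \dots, k$, of the Zariski-open locus $\{v \neq 0\}$. On each $U_i$, one can exhibit a matrix $M_i(v) \in \mathbb{F}^{k \times k}$ whose entries are polynomials in $v$, with $\det M_i(v)$ a power of $v_i$ (and hence nonvanishing on $U_i$), satisfying $M_i(v)\, v = \alpha_i(v)\, e_1$ for a polynomial $\alpha_i(v)$ nonvanishing on $U_i$; a concrete choice is a row swap composed with a lower triangular elimination matrix, which yields $\alpha_i(v) = v_i^2$. Carrying out the reduction in \eqref{Zrank} with this $M_i(v) \otimes I_m$ produces a $Z$-matrix whose entries are polynomial in $(v, W)$, and the condition that some $(k-1)n \times (k-1)n$ minor of that matrix is nonzero cuts out a Zariski-open subset $V_i \subseteq U_i$. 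By the invariance theorem preceding the definition of $Z$-rank, the rank of $Z$ does not depend on the particular reducing matrix, so $V_i$ coincides with the full $Z$-rank locus inside $U_i$, and $S := V_1 \cup \cdots \cup V_k$ is therefore a Zariski-open subset of $\mathbb{L}_1(P)$ containing $C_1^g(\lambda)$.

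By Theorem~\ref{Zthm} every pencil in $S$ is a strong g-linearization of $P(\lambda)$, which proves the claim for $\mathbb{L}_1(P)$. The corresponding statement for $\mathbb{L}_2(P)$ when $m \leq n$ follows either by applying the above to $P^T$ via the duality \eqref{l1tol2}, or by repeating the argument verbatim with Theorem~\ref{Zthm2} in place of Theorem~\ref{Zthm}. The main technical point I expect to require care is the polynomial (rather than merely rational) construction of $M_i(v)$ on $U_i$: only then is the $Z$-matrix literally polynomial in the parameters $(v, W)$, so that its maximal minors define a genuine Zariski-open condition and the invariance theorem can be invoked to patch the $V_i$ into a single Zariski-open set $S$.
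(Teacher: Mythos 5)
Your proposal is correct and follows essentially the same route as the paper: parametrize $\mathbb{L}_1(P)$ by $(v,W)$ via Theorem~\ref{charL1}, reduce with an explicit matrix $M$ whose entries are polynomial in $v$, observe that full $Z$-rank is cut out by nonvanishing of $(k-1)n\times(k-1)n$ minors that are polynomial in $(v,W)$, verify nonvanishing at $C_1^g(\lambda)$, and conclude with Theorem~\ref{Zthm} (and duality for $\mathbb{L}_2(P)$). The only difference is cosmetic: the paper works on the single chart $v_1\neq 0$ and packages the condition into one polynomial $\mathcal{P}(v,W)=v_1\sum\det(\text{minors of }\tilde Z)$ whose complement is a proper algebraic subset, whereas you cover $\{v\neq 0\}$ by the charts $v_i\neq 0$ and patch using the $Z$-rank invariance result—both yield the same genericity conclusion.
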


\begin{proof}
 Let $P(\lambda)=\sum_{i=0}^k \lambda^i A_i$ be an $m\times n$ matrix polynomial with $m\geq n.$
 The set of pencils in $\mathbb{L}_1(P)$ with right ansatz vector $v$ consists of all $L(\lambda)=\lambda X+Y$ such that
 $$X=\begin{bmatrix}v\otimes A_k & -W\end{bmatrix} \mbox{ and } Y=\begin{bmatrix}W+v\otimes \begin{bmatrix}A_{k-1}&\dots&A_1\end{bmatrix} & v\otimes A_0\end{bmatrix}$$ with $W\in \mathbb{F}^{km\times (k-1)n}$ chosen arbitrarily. For a parameterized $\mathbb{L}_1(P)$ we define the isomorphism

 \begin{eqnarray*}
 \Gamma:\mathbb{L}_1(P) &\rightarrow& \mathbb{F}^k\times\mathbb{F}^{km \times (k-1)n}\\
 \lambda X+Y &\mapsto& (v,W).
 \end{eqnarray*}

 \noindent Suppose $L(\lambda)=\lambda X+Y\in\mathbb{L}_1(P)$ with right ansatz vector $v=\begin{bmatrix}v_1&\dots&v_k\end{bmatrix}^T$.
 Let $$M=\left[\begin{array}{c|c}1&0\\ \hline \begin{matrix}-v_2\\ \vdots \\ -v_k\end{matrix}&v_1I_{k-1}\end{array}\right],$$ then $Mv=v_1e_1$ and if $v_1\neq 0$ then $M$ is nonsingular.
%Clearly $\tilde E_v v=v_1e_1.$ So if $v_1\neq 0$, then $\tilde E_v$ is a particular choice of matrix $M$ with $\alpha=v_1$.
Now $(M\otimes I_m)L(\lambda)=\lambda ((M\otimes I_m)X)+((M\otimes I_m)Y)$ where
 $$(M\otimes I_m)X=\left[\begin{array}{c|c} Mv\otimes A_k & -(M\otimes I_m)W\end{array}\right]=\left[\begin{array}{c|c}v_1e_1\otimes A_k& -(M\otimes I_m)W\end{array}\right]:=\left[\begin{array}{c|c}v_1A_k&*\\ \hline &\tilde Z\end{array}\right].$$
 Clearly $-\tilde Z$ is a $Z$-matrix of $L(\lambda)$ with respect to $(M, v_1)$.
Then $$\mathcal{P}(v,W):=v_1\sum \det(\text{ minor of } \tilde Z \text{ of order } (k-1)n)$$ is a polynomial
in the $k+k(k-1)mn$ entries of $v$ and $W$. The pair corresponding to $C_1^g(\lambda)$ has
 $v=e_1$ and $W=\left[\begin{array}{c} 0\\ \hline -I_{k-1} \otimes I_{m,n} \end{array}\right].$
Hence, $\tilde Z= I_{k-1} \otimes I_{m,n}$ and thus $\mathcal{P}(v,W)\neq0$ for $C_1^g(\lambda)$.
 Therefore the zero set of $\mathcal{P}(v,W)$ defines a proper algebraic subset of $\mathbb{L}_1(P)$. Clearly any pair $(v,W)$ such that $\mathcal{P}(v,W)\neq0$ has $v_1\neq0$ and any one of the minors of $\tilde Z$ of order $(k-1)n$ has nonzero determinant. So the corresponding $L(\lambda)$ will have full $Z$-rank and hence is a strong g-linearization of $P(\lambda)$.
\end{proof}

An important difference between linearizations of regular and singular square matrix polynomials $P(\lambda)$ in the space $\L_1(P)$ is that while every linearization of $P(\lambda)$ in $\L_1(P)$ is also a strong linearization of $P(\lambda)$ when $P(\lambda)$ is a regular matrix polynomial~\cite[Theorem 4.3]{MacMMM06a}, the same is not true if $P(\lambda)$ is singular~\cite[Example 3]{DeDM09}. The following example shows that the same also holds for g-linearizations of rectangular matrix polynomials, i.e., there exist rectangular matrix polynomials $P(\lambda)$ with g-linearizations in $\L_1(P)$ that are not strong g-linearizations.

\begin{example}
\small{
Let $P(\lambda) = \left[\begin{array}{cc} \lambda^2 & \lambda \\ \lambda & 1 \\ 0 & 0 \end{array}\right].$ Then $$L(\lambda) = \lambda\left[\begin{array}{cccc} 1 & 0 & 0 & 0 \\
0 & 0 & 0 & 0 \\ 0 & 0 & 0 & 0 \\ 0 & 0 & -1 & 0 \\ 0 & 0 & 0 & 0 \\ 0 & 0 & 0 & 0 \end{array} \right] + \left[\begin{array}{cccc} 0 & 1 & 0 & 0 \\ 1 & 0 & 0 & 1 \\ 0 & 0 & 0 & 0 \\ 1 & 0 & 0 & 0\\ 0 & 0 & 0 & 0 \\ 0 & 0 & 0 & 0 \end{array}\right] \in \L_1(P)$$
is a g-linearization of $P(\lambda)$ as $E(\lambda)L(\lambda)F(\lambda) =  \left[\begin{array}{cc} P(\lambda) & 0 \\ 0 & I_{3,2} \end{array}\right]$ for
$$E(\lambda) = \left[\begin{array}{cccccc} 0 & 0 & 1 & \lambda & 0 & 0 \\ 0 & 0 & 0 & 1 & 0 & 0 \\ 0 & 0 & 0 & 0 & 1 & 0\\ 0 & 1 & 0 & 0 & 0 & 0 \\ 1 & 0 & 0 & 0 & 0 & 0 \\ 0 & 0 & 0 & 0 & 0 & 1 \end{array}\right] \mbox{ and } F(\lambda) = \left[\begin{array}{cccc} 0 & 1 & 0 & 0 \\ 0 & -\lambda & 0 & 1 \\ -1 & 0 & 0 & 0 \\ 0 & -1 & 1 & 0 \end{array}\right].$$
But $L(\lambda)$ is not a strong g-linearization of $P(\lambda)$ as infinity is a eigenvalue of $L(\lambda)$ but not of $P(\lambda)$.}
% From the definition of g-linearizations, it is clear that the dimension of the right null space $N_r(P)$ of an $m \times n$ matrix polynomial $P(\lambda)$ with $m > n,$ is the same as that of the right null space $N_r(L)$ of any g-linearization $L(\lambda).$ However the right null space of $\rev \, L(\lambda)$ has dimension $2$ while that of $\rev \, P(\lambda)$ is $1.$ Hence, it is clear that $L(\lambda)$ is not a strong g-linearization of $P(\lambda).$}
\end{example}

\section{Recovery of minimal indices and bases in $\L_1(P)$ and $\L_2(P)$}

In this section we show the process of extraction of left and right minimal bases and indices of an $m \times n$ polynomial $P(\lambda)$ from that of a g-linearization in $\L_1(P)$ or $\L_2(P).$ In particular we show that these extractions are possible from g-linearizations of $P(\lambda)$ in $\L_1(P)$ with full $Z$-rank if $m\geq n$ and those of $P(\lambda)$ in $\L_2(P)$ if $m\leq  n.$ It is easy to see that if $m\geq n$ and $L(\lambda) \in \L_1(P)$ is of full Z-rank, then
$$\dim \, N_r(L) = \dim \, N_r(P) \mbox{ and } \dim \, N_l (L) + (k-1)(m-n) = \dim \, N_l(P).$$
On the other hand if $m\leq n,$ and $L(\lambda) \in \L_2(P)$ has full Z-rank, then the above equalities hold when the positions of the right and left null spaces are interchanged for both $P(\lambda)$ and $L(\lambda).$ Therefore the process of extracting the right (respectively, left) minimal bases and indices of $P(\lambda) \in \F[\lambda]^{m \times n}$ from those of a g-linearization of $P(\lambda)$ in $\L_1(P)$ (respectively, $\L_2(P)$) is identical to the extraction of the same quantities from a linearization of a square singular polynomial in the respective spaces (as established in~\cite{DeDM09}). However, showing that the left (respectively, right) minimal bases and indices of $P(\lambda)$ can also be extracted from those of $L(\lambda) \in \L_1(P)$ (respectively, $L(\lambda) \in \L_2(P)$) with full $Z$-rank requires more work.

\subsection{Recovery of right (left) minimal indices and bases in $\L_1(P)$ ($\L_2(P)$)}

Given an $m \times n$ matrix polynomial $P(\lambda)$ with $m\geq n,$ the following lemma provides an isomorphism between $N_r(P)$ and $N_r(L)$ that enables extraction of the right minimal bases and indices of $P(\lambda)$ from those of $L(\lambda) \in \L_1(P).$

\begin{lemma}
 Let $P(\lambda)$ be an $m\times n$ matrix polynomial of grade $k$ with $m \geq n,$ $L(\lambda)\in\mathbb{L}_1(P)$ with nonzero right ansatz vector $v$, and $x(\lambda)\in\mathbb{F}(\lambda)^n$. Then $\Lambda_k(\lambda)\otimes x(\lambda)\in N_r(L)$ if and only if $x(\lambda)\in N_r(P)$. Moreover, if $L(\lambda)$ is a g-linearization of $P(\lambda)$, then the mapping
 \begin{eqnarray*}
 R_\Lambda:N_r(P)&\rightarrow& N_r(L)\\
 x(\lambda)&\mapsto& \Lambda_k(\lambda)\otimes x(\lambda).
 \end{eqnarray*}
 is a linear isomorphism between the $\mathbb{F}(\lambda)$-vector spaces $N_r(P)$ and $N_r(L).$ Furthermore, $x(\lambda)\in N_r(P)$ is a vector polynomial if and only if $\Lambda_k(\lambda)\otimes x(\lambda)\in N_r(L)$ is a vector polynomial.
\end{lemma}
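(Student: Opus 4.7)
The plan is to prove the three assertions in sequence, leveraging the ansatz identity for the first part and a normal-rank count for the second.

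For the biconditional $\Lambda_k(\lambda)\otimes x(\lambda)\in N_r(L)\Leftrightarrow x(\lambda)\in N_r(P)$, I would apply the defining identity of $\L_1(P)$ directly. Right-multiplying $L(\lambda)(\Lambda_k(\lambda)\otimes I_n)=v\otimes P(\lambda)$ by $x(\lambda)$ yields
$$L(\lambda)(\Lambda_k(\lambda)\otimes x(\lambda))=v\otimes (P(\lambda)x(\lambda)).$$
Since $v\neq 0$, the right-hand side vanishes if and only if $P(\lambda)x(\lambda)=0$, which gives both implications simultaneously.

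Next, assuming $L(\lambda)$ is a g-linearization, I would establish that $R_\Lambda$ is an $\F(\lambda)$-linear isomorphism. Linearity is immediate from bilinearity of the Kronecker product, and injectivity follows because the bottom $n$ entries of $\Lambda_k(\lambda)\otimes x(\lambda)$ equal $x(\lambda)$ itself (the last component of $\Lambda_k(\lambda)$ being $1$), so the kernel is trivial. For surjectivity I would compare $\F(\lambda)$-dimensions. The g-linearization hypothesis provides unimodular $E(\lambda),F(\lambda)$ with
$$E(\lambda)L(\lambda)F(\lambda)=\begin{bmatrix} P(\lambda) & \\ & I_{k-1}\otimes I_{m,n}\end{bmatrix},$$
and since unimodular equivalence preserves normal rank while $m\geq n$ forces $\mathrm{nrank}(I_{m,n})=n$, we obtain $\mathrm{nrank}(L)=\mathrm{nrank}(P)+(k-1)n$. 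Hence
$$\dim_{\F(\lambda)} N_r(L)=nk-\mathrm{nrank}(L)=n-\mathrm{nrank}(P)=\dim_{\F(\lambda)} N_r(P),$$
and an injective $\F(\lambda)$-linear map between spaces of the same finite dimension is necessarily bijective.

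The polynomial-preservation claim is then straightforward: if $x(\lambda)$ is polynomial, then so is each block $\lambda^i x(\lambda)$, hence so is $\Lambda_k(\lambda)\otimes x(\lambda)$; conversely, reading off the bottom $n$ entries of $\Lambda_k(\lambda)\otimes x(\lambda)$ recovers $x(\lambda)$, so polynomiality of the whole vector forces that of $x(\lambda)$. The only genuinely nontrivial step is the surjectivity via dimension count, and the hypothesis $m\geq n$ enters precisely there: it ensures $\mathrm{nrank}(I_{m,n})=n$, which equates $\dim N_r(L)$ with $\dim N_r(P)$ and rules out $R_\Lambda$ landing in a strictly smaller subspace.
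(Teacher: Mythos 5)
Your argument is correct and is essentially the argument the paper relies on: the paper omits the proof, citing Lemma 5.1 of~\cite{DeDM09}, whose proof proceeds exactly as you do --- the ansatz identity gives the biconditional, injectivity comes from the last block of $\Lambda_k(\lambda)\otimes x(\lambda)$, and surjectivity follows from the dimension count $\dim N_r(L)=\dim N_r(P)$ supplied by the (g-)linearization hypothesis. Your only adaptation, namely that $m\geq n$ gives $\mathrm{nrank}(I_{k-1}\otimes I_{m,n})=(k-1)n$, is precisely what is needed in the rectangular setting.
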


We skip the proof as it follows by arguing exactly as in the proof of \cite[Lemma 5.1]{DeDM09}. Now the following theorem whose proof is immediate shows that the right minimal bases and indices of $P(\lambda) \in \F[\lambda]^{m \times n}, \, m\geq n,$ have a very simple relationship with those of a g-linearization $L(\lambda) \in \L_1(P)$ and can be easily extracted from the latter.

\begin{theorem}\label{rminbasis}
 Let $P(\lambda)$ be an $m\times n$ matrix polynomial of grade $k$ with $m\geq n$ and $\mathrm{nrank}P(\lambda)=r.$ Also let $L(\lambda)\in \mathbb{L}_1(P)$ be a g-linearization of $P(\lambda).$

 \begin{enumerate}
  \item The right minimal indices of $P(\lambda)$ are $\epsilon_1\leq\epsilon_2\leq\dots\leq\epsilon_{n-r}$ if and only if the right minimal indices of $L(\lambda)$ are $(k-1)+\epsilon_1\leq(k-1)+\epsilon_2\leq\dots\leq(k-1)+\epsilon_{n-r}$.
  \item Every right minimal basis of $L(\lambda)$ is of the form $\{\Lambda_k(\lambda)\otimes x_1(\lambda),\dots,\Lambda_k(\lambda)\otimes x_{n-r}(\lambda)\}$ where \\ $\{x_1(\lambda),\dots,x_{n-r}(\lambda)\}$ is a right minimal basis of $P(\lambda)$.
 \end{enumerate}

\end{theorem}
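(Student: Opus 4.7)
The plan is to derive Theorem~\ref{rminbasis} directly from the preceding lemma, which establishes that $R_\Lambda \colon N_r(P) \to N_r(L)$ is an $\mathbb{F}(\lambda)$-linear isomorphism that bijects polynomial vectors with polynomial vectors. The only additional ingredient needed is the elementary degree identity $\deg(\Lambda_k(\lambda)\otimes x(\lambda)) = (k-1)+\deg x(\lambda)$ for every nonzero vector polynomial $x(\lambda)$, which holds because the top block of $\Lambda_k(\lambda)=[\lambda^{k-1},\dots,\lambda,1]^T$ contributes degree $k-1$ while the bottom block is the constant $1$, so the maximum componentwise degree of $\Lambda_k\otimes x$ is attained in the top block.

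Given a right minimal basis $\{x_1(\lambda),\dots,x_{n-r}(\lambda)\}$ of $P(\lambda)$ with degrees $\epsilon_1\leq\cdots\leq\epsilon_{n-r}$, I would apply $R_\Lambda$ to produce the candidate basis $\{\Lambda_k\otimes x_i\}$ of $N_r(L)$, whose components are polynomial of degree $(k-1)+\epsilon_i$, giving total order $(k-1)(n-r)+\sum_i \epsilon_i$. To show this order is minimal, I would take any polynomial basis $\{y_1,\dots,y_{n-r}\}$ of $N_r(L)$ with degrees $d_1\leq\cdots\leq d_{n-r}$; the lemma writes $y_i=\Lambda_k\otimes z_i$ for a unique polynomial vector $z_i\in N_r(P)$ of degree $d_i-(k-1)$, and $R_\Lambda^{-1}$ guarantees that $\{z_1,\dots,z_{n-r}\}$ is a polynomial basis of $N_r(P)$. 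Minimality of $\{x_i\}$ then forces $\sum_i d_i \geq (k-1)(n-r)+\sum_i\epsilon_i$, with equality attained by the image basis $\{\Lambda_k\otimes x_i\}$, so the latter is a minimal basis of $N_r(L)$. Since the ordered list of degrees of a minimal basis of a subspace is an invariant of the subspace~\cite{For75}, the right minimal indices of $L(\lambda)$ are precisely $(k-1)+\epsilon_1\leq\cdots\leq(k-1)+\epsilon_{n-r}$; the converse direction of (1) is the same argument applied through $R_\Lambda^{-1}$.

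For part (2), I would run the bijection in reverse: given any right minimal basis $\{y_1,\dots,y_{n-r}\}$ of $L(\lambda)$, the preimages $z_i=R_\Lambda^{-1}(y_i)$ are vector polynomials that, by the calculation above, form a polynomial basis of $N_r(P)$ of total order $\sum_i \epsilon_i$, hence are a right minimal basis of $P(\lambda)$, and $y_i=\Lambda_k\otimes z_i$ is the claimed form. The main, and essentially only, obstacle is the minimality comparison, which reduces cleanly via the degree-preserving bijection of the lemma to a one-line comparison of orders between bases of $N_r(P)$ and $N_r(L)$; no intricate computation using the explicit structure of $L(\lambda)$ is required, which is why the authors flag this proof as immediate.
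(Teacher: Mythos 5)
Your argument is correct and is exactly the route the paper intends: the theorem is stated as an immediate consequence of the preceding isomorphism lemma (the analogue of Lemma 5.1 of~\cite{DeDM09}), using the degree shift $\deg(\Lambda_k(\lambda)\otimes x(\lambda))=(k-1)+\deg x(\lambda)$ and the order-comparison between polynomial bases of $N_r(P)$ and $N_r(L)$. Your explicit minimality comparison and the reverse use of the bijection for part (2) fill in precisely the details the paper leaves to the reader, so there is nothing to correct.
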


Similarly, if $P(\lambda)$ is an $m \times n$ matrix polynomial with $m\leq n,$ then the mapping
$$  R_\Lambda:N_l(P) \rightarrow N_l(L), \quad  y(\lambda) \mapsto \Lambda_k(\lambda)\otimes y(\lambda),$$
 is an isomorphism between $N_l(P)$ and $N_l(L)$ that also induces a bijection between vector polynomials in $N_l(P)$ and $N_l(L).$ This results in the following counterpart of Theorem~\ref{rminbasis} for extraction of the left minimal bases and indices of $P(\lambda)$ from those of $L(\lambda) \in \L_2(P)$ with full Z-rank.

\begin{theorem}\label{lminbasis2}
 Let $P(\lambda)$ be an $m\times n $ matrix polynomial of grade $k$ with $m \leq n$ and $\mathrm{nrank}P(\lambda)=r.$ Also let $L(\lambda)\in \mathbb{L}_2(P)$ be a g-linearization of $P(\lambda).$

 \begin{enumerate}
  \item The left minimal indices of $P(\lambda)$ are $\epsilon_1\leq\epsilon_2\leq\dots\leq\epsilon_{m-r}$  if and only if the left minimal indices of $L(\lambda)$ are $(k-1)+\epsilon_1\leq(k-1)+\epsilon_2\leq\dots\leq(k-1)+\epsilon_{m-r}$.
  \item Every left minimal basis of $L(\lambda)$ is of the form $\{\Lambda_k(\lambda)\otimes y_1(\lambda),\dots,\Lambda_k(\lambda)\otimes y_{m-r}(\lambda)\}$ where \\ $\{y_1(\lambda),\dots, y_{m-r}(\lambda)\}$ is a left minimal basis of $P(\lambda)$.
 \end{enumerate}

\end{theorem}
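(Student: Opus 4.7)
The plan is to reduce Theorem~\ref{lminbasis2} to Theorem~\ref{rminbasis} via the transpose correspondence~\eqref{l1tol2}. Given an $m \times n$ polynomial $P(\lambda)$ with $m \leq n$ and $L(\lambda) \in \L_2(P)$ a g-linearization of $P(\lambda)$, I would view $L(\lambda)^T$ as an element of $\L_1(P^T)$, where $P^T$ is an $n \times m$ polynomial with $n \geq m$, which is exactly the setup to which Theorem~\ref{rminbasis} applies.

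The first step is to check that $L(\lambda)^T$ is itself a g-linearization of $P(\lambda)^T$. Starting from $E(\lambda)L(\lambda)F(\lambda) = \mathrm{diag}(P(\lambda),\, I_{k-1} \otimes I_{m,n})$ with $E, F$ unimodular and transposing yields $F(\lambda)^T L(\lambda)^T E(\lambda)^T = \mathrm{diag}(P(\lambda)^T,\, I_{k-1} \otimes I_{m,n}^T)$. Since $m \leq n$, a direct check from the definition of $I_{m,n}$ shows that $I_{m,n}^T = I_{n,m}$, so the right-hand side is exactly the block structure required for $L^T$ to be a g-linearization of $P^T$; unimodularity of $F^T$ and $E^T$ is inherited from $F$ and $E$.

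Next I would translate null spaces: $N_l(P) = N_r(P^T)$ and $N_l(L) = N_r(L^T)$, while $\mathrm{nrank}\, P^T = r$, so the count $m-r$ of left minimal indices of $P$ and $L$ coincides with the count of right minimal indices of $P^T$ and $L^T$ respectively. Applying Theorem~\ref{rminbasis} to $L^T \in \L_1(P^T)$ then gives that the right minimal indices of $P^T$ are $\epsilon_1 \leq \cdots \leq \epsilon_{m-r}$ iff those of $L^T$ are $(k-1)+\epsilon_i$, and every right minimal basis of $L^T$ has the form $\{\Lambda_k(\lambda) \otimes y_i(\lambda)\}$ where $\{y_1(\lambda), \ldots, y_{m-r}(\lambda)\}$ is a right minimal basis of $P^T$. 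Re-interpreting these conclusions through the null-space identifications yields precisely the two claims of Theorem~\ref{lminbasis2}, since a right minimal basis of $P^T$ is the same as a left minimal basis of $P$.

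The only subtle point is the identity $I_{m,n}^T = I_{n,m}$ in the regime $m \leq n$, but this is immediate from the definition of $I_{m,n}$. If one prefers to avoid the transpose, the alternative route is to directly prove the isomorphism $R_\Lambda: N_l(P) \to N_l(L)$ mentioned in the paragraph preceding Theorem~\ref{lminbasis2}: starting from the left ansatz equation $(\Lambda_k(\lambda)^T \otimes I_m) L(\lambda) = w^T \otimes P(\lambda)$, one shows $\Lambda_k(\lambda) \otimes y(\lambda) \in N_l(L)$ iff $y(\lambda) \in N_l(P)$, upgrades this to an isomorphism by dimension count when $L$ is a g-linearization, checks that it preserves the vector-polynomial property, and then deduces the index correspondence from $\deg(\Lambda_k(\lambda) \otimes y(\lambda)) = (k-1) + \deg y(\lambda)$. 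Both routes reach the same conclusion with essentially no additional obstacle beyond the routine verification above.
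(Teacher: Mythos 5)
Your proposal is correct and follows essentially the paper's own approach: the paper gives no separate proof of Theorem~\ref{lminbasis2}, stating instead that all $\L_2(P)$ results follow either from the correspondence~\eqref{l1tol2} (your primary route, reducing to Theorem~\ref{rminbasis} applied to $L(\lambda)^T \in \L_1(P^T)$) or from the analogous direct argument via the isomorphism $R_\Lambda: N_l(P) \to N_l(L)$ sketched in the paragraph preceding the theorem (your alternative route). Your extra verifications --- that $I_{m,n}^T = I_{n,m}$ so $L^T$ is a g-linearization of $P^T$, and the identifications $N_l(P) = N_r(P^T)$, $N_l(L) = N_r(L^T)$ with $\mathrm{nrank}\,P^T = r$ --- are exactly the routine details the paper leaves implicit, and they are correct.
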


\subsection{Recovery of left (right) minimal indices and bases in $\L_1(P)$ ($\L_2(P)$)}

In this section we first show that the left minimal bases and indices of $P(\lambda) \in \F[\lambda]^{m \times n}$ with $m\geq n,$ can be extracted from the g-linearizations in $\L_1(P)$ that are of full Z-rank. The following lemmas will be very useful for establishing Theorem~\ref{lminbasis} which is the main result.

\begin{lemma}\label{mainl}
 Let $P(\lambda) = \sum_{i = 0}^k \lambda^iA_i$ be an $m\times n$ matrix polynomial of grade $k$ with $m \geq n.$ Suppose $L(\lambda)\in\mathbb{L}_1(P)$ has full $Z$-rank and right ansatz vector $v\neq 0$. Then the mapping
 \begin{eqnarray*}
 \mathcal{L}_v:N_l(L) &\rightarrow& N_l(P)\\
 y(\lambda) &\mapsto& (v^T\otimes I_m)y(\lambda)
 \end{eqnarray*}
 is a linear map from the vector space $N_l(L)$ onto the vector space $N_l(P)$ over $\F(\lambda).$ Furthermore it is an onto map from the vector polynomials in $N_l(L)$ to the vector polynomials in $N_l(P)$ with the property that if $q(\lambda) \in N_l(P)$ is a vector polynomial of degree $\delta,$ then there exists a vector polynomial $y(\lambda) \in N_l(L)$ of degree $\delta$ such that $\mathcal{L}_v(y(\lambda)) = q(\lambda).$
\end{lemma}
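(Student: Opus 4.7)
The plan is to verify the easy statements directly from the ansatz and then construct a polynomial preimage of $q$ by exploiting the block structure of $L \in \L_1(P)$. For well-definedness: if $y(\lambda) \in N_l(L)$, multiplying the ansatz $L(\lambda)(\Lambda_k(\lambda) \otimes I_n) = v \otimes P(\lambda)$ on the left by $y(\lambda)^T$ yields $((v^T \otimes I_m) y(\lambda))^T P(\lambda) \equiv 0$, so $\mathcal{L}_v(y) \in N_l(P)$; linearity is immediate. For the remaining claims I would reduce to the case $v = \alpha e_1$: given a nonsingular $M$ with $Mv = \alpha e_1$, the pencil $L'(\lambda) := (M \otimes I_m) L(\lambda)$ lies in $\L_1(P)$ with right ansatz vector $\alpha e_1$ and the same $Z$-rank as $L$, while $y \mapsto y' := (M^{-T} \otimes I_m) y$ is a degree-preserving linear bijection $N_l(L) \to N_l(L')$ satisfying $\mathcal{L}_v(y) = \mathcal{L}_{\alpha e_1}(y')$.

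Assuming $v = \alpha e_1$, I would partition $y = \begin{bmatrix} y_1 \\ \tilde y \end{bmatrix}$ with $y_1 \in \F(\lambda)^m$ and use the $k \times k$ block description of $L$ from the Corollary. The first block row is $L_{1,j}(\lambda) = \lambda X_{12}^{(j-1)} + \alpha A_{k-j} - X_{12}^{(j)}$ (with the conventions $X_{12}^{(0)} := \alpha A_k$ and $X_{12}^{(k)} := 0$, so that the free $m \times n$ blocks are $X_{12}^{(1)}, \dots, X_{12}^{(k-1)}$); for $i \geq 2$, the blocks are built from $Z$, namely $L_{i,1} = Z^{(i-1,1)}$, $L_{i,j} = -\lambda Z^{(i-1,j-1)} + Z^{(i-1,j)}$ for $2 \leq j \leq k-1$, and $L_{i,k} = -\lambda Z^{(i-1,k-1)}$, where $Z^{(i,j)}$ denotes the $(i,j)$ block of $Z$. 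With $Z_j$ denoting the $j$-th block column of $Z$ (of size $(k-1)m \times n$) and $\xi_j := \tilde y^T Z_j$, the column equations $(y^T L(\lambda))_j = 0$ for $j = 1, \dots, k-1$ telescope into the recursion $\xi_j = \lambda \xi_{j-1} - y_1^T L_{1,j}(\lambda)$ (with $\xi_0 := 0$), while the $j = k$ equation $\lambda \xi_{k-1} = y_1^T L_{1,k}(\lambda)$ is, via the first-row ansatz $\sum_{l=1}^k \lambda^{k-l} L_{1,l}(\lambda) = \alpha P(\lambda)$, equivalent to $y_1 \in N_l(P)$. Setting $y_1 := q/\alpha$ makes the system consistent; iterating the recursion and telescoping the $X_{12}$ contributions yields
\begin{equation*}
  \xi_j(\lambda) = -q(\lambda)^T \sum_{s=0}^{j} \lambda^s A_{s+k-j} + q(\lambda)^T X_{12}^{(j)}/\alpha.
\end{equation*}

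The key obstacle is the degree bound $\deg \xi_j \leq \delta$: a naive reading of the recursion yields only $\delta + j$, so one must exploit $q^T P \equiv 0$ coefficient-by-coefficient. Writing $q(\lambda) = \sum_{r=0}^\delta \lambda^r q_r$, the coefficient of $\lambda^u$ in $q^T \sum_{s=0}^j \lambda^s A_{s+k-j}$ equals the restriction to $t \in [k-j, k]$ of the coefficient of $\lambda^{u+k-j}$ in $q^T P(\lambda)$, which vanishes; the complementary restriction to $t \in [0, k-j-1]$ would require $r = u+k-j-t \geq u+1 > \delta$ whenever $u \geq \delta$, so that sum is empty. Hence $q^T \sum_s \lambda^s A_{s+k-j}$ has degree at most $\delta - 1$, giving $\deg \xi_j \leq \delta$. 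Since $Z$ has full column rank, $Z^\dagger := (Z^T Z)^{-1} Z^T$ is a left inverse, and with $\Xi(\lambda) := [\xi_1(\lambda), \dots, \xi_{k-1}(\lambda)]$ the vector $\tilde y := (Z^\dagger)^T \Xi^T$ is a polynomial of degree $\leq \delta$ solving $\tilde y^T Z = \Xi$. Then $y := \begin{bmatrix} q/\alpha \\ \tilde y \end{bmatrix}$ is a polynomial element of $N_l(L)$ of degree $\leq \delta$ with $\mathcal{L}_{\alpha e_1}(y) = q$; surjectivity of $\mathcal{L}_v$ over $\F(\lambda)$ follows by clearing denominators.
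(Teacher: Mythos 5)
Your proof is correct, and its core construction coincides with the paper's: the paper also reduces to $Mv=e_1$ via a degree-preserving bijection of left null spaces, and builds the preimage as $q_1(\lambda)=[\,q(\lambda)^T\ \ \tilde q(\lambda)^T\,]^T$ with $\tilde q(\lambda)^T=-q(\lambda)^T(\lambda[A_k\ X_{12}]+[Y_{11}\ A_0])(S_k(\lambda)\otimes I_n)Z^\dagger$; unwinding $S_k(\lambda)$ shows this $\tilde q$ is exactly your $\Xi Z^\dagger$, i.e., your recursion $\xi_j=\lambda\xi_{j-1}-y_1^TL_{1,j}$ written in closed form, and your use of the first-row ansatz identity $\sum_{l}\lambda^{k-l}L_{1,l}(\lambda)=\alpha P(\lambda)$ to make the last block column consistent is the same telescoping the paper carries out with $X_i+Y_i=A_{k-i}$. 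Where you genuinely deviate is the degree-$\delta$ claim: the paper does not bound the degree of its constructed preimage at all; instead it takes an arbitrary polynomial preimage of degree $\hat\delta>\delta$, splits off the high-order tail $\sum_{i>\delta}\lambda^i[\,0\ \ t_i^T\,]^T$, and deduces $t_i^TZ=0$ for $i>\delta$ by comparing degrees in $q_1(\lambda)^T\hat L(\lambda)=0$, so that truncation yields a degree-$\delta$ preimage. You instead prove coefficient-wise, directly from $q(\lambda)^TP(\lambda)\equiv 0$, that each $\xi_j$ has degree at most $\delta$, so the explicitly constructed preimage already has degree exactly $\delta$ (its top block is $q/\alpha$); this renders the paper's truncation step unnecessary and is a clean sharpening rather than a gap. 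One small repair: over $\C$ the matrix $Z^TZ$ can be singular even when $Z$ has full column rank (e.g., $Z=[\,1\ \ i\,]^T$), so define $Z^\dagger$ as $(Z^*Z)^{-1}Z^*$ or simply take any left inverse of $Z$, which exists by the full $Z$-rank hypothesis; nothing else in your argument is affected.
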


\begin{proof}
 Let $y(\lambda)\in N_l(L).$ Since $L(\lambda) \in \L_1(P),$
 $$ y(\lambda)^TL(\lambda)(\Lambda_k(\lambda)\otimes I_n)=0 \Rightarrow y(\lambda)^T(v\otimes P(\lambda))=0 \Rightarrow y(\lambda)^T(v\otimes I_m)P(\lambda)=0.$$
Therefore $(v^T\otimes I_m)y(\lambda)\in N_l(P)$ and this shows that $\mathcal{L}_v$ is well defined and clearly linear.

Let $q(\lambda) \in N_l(P).$ To show that $\mathcal{L}_v$ is onto we prove that there exists $y(\lambda)\in N_l(L)$ such that $\mathcal{L}_v(y(\lambda)) = q(\lambda).$
Let $$S_k(\lambda) := \begin{bmatrix}1&\lambda&\dots&\lambda^{k-2}\\&\ddots&\ddots&\\&&\ddots&\lambda\\&&&1\\&&&0\end{bmatrix}_{k\times (k-1)},$$
\begin{equation}\label{q1}
q_1(\lambda) :=\begin{bmatrix}q(\lambda)\\ \tilde q(\lambda)\end{bmatrix} \mbox{ and } y(\lambda)=(M^{T}\otimes I_m)q_1(\lambda)
\end{equation}
where \begin{equation}\label{tildeq}
\tilde q(\lambda)^T=-q(\lambda)^T\underbrace{(\lambda\begin{bmatrix}A_k & X_{12}\end{bmatrix}+\begin{bmatrix}Y_{11}& A_0\end{bmatrix})(S_k(\lambda)\otimes I_n)Z^\dagger}_{=: C}
\end{equation} and $M$ is a nonsingular matrix such that $Mv=e_1.$ Clearly $\mathcal L_v(y(\lambda))=q(\lambda)$.

Now $y(\lambda)\in N_l(L)$ if and only if $q_1(\lambda)\in N_l(\hat L)$, where $\hat{L}(\lambda)=(M\otimes I_m)L(\lambda).$ Also as $L(\lambda) \in \L_1(P)$ corresponds to right ansatz vector $v$ and $Mv = e_1,$ therefore $\hat{L}(\lambda) \in \L_1(P)$ corresponds to right ansatz vector $e_1.$
So, $$\hat{L}(\lambda) = \lambda\left[ \begin{array}{c|c} A_k&X_{12}\\ \hline & -Z\end{array}\right ]+\left [\begin{array}{c|c} Y_{11}& A_0\\ \hline Z & \end{array}\right ]$$
where $Z$ has full rank and $X_{12} + Y_{11} = \left[\begin{array}{cccc} A_{k-1} & A_{k-2} & \cdots & A_1 \end{array}\right].$
 This implies that, $$X_{12} := \begin{bmatrix}X_1&X_2&\cdots&X_{k-1}\end{bmatrix} \mbox{ and } Y_{11} := \begin{bmatrix}Y_1&Y_2&\cdots&Y_{k-1}\end{bmatrix}$$ where $X_i,Y_i\in\mathbb{F}^{m\times n}$ satisfy $X_i + Y_i = A_{k-i}$ for $i = 1, 2, \ldots k-1.$
Now from \eqref{q1} and \eqref{tildeq},
\begin{eqnarray}
q_1(\lambda)^T\hat{L}(\lambda) & = & [q(\lambda)^T \quad \tilde{q}(\lambda)^T]\hat{L}(\lambda) \nonumber \\
                               & = & q(\lambda)^T[I_m \quad -C]\left(\lambda\left[ \begin{array}{c|c} A_k&X_{12}\\ \hline & -Z\end{array}\right ]+\left [\begin{array}{c|c} Y_{11}& A_0\\ \hline Z & \end{array}\right ]\right) \nonumber \\
                               & = &\label{C1} q(\lambda)^T\left(\lambda[A_k \quad X_{12} + \hat{C}] + [Y_{11}-\hat{C} \quad A_0]\right)
                               \end{eqnarray}
where
\begin{eqnarray}
 \hat{C} &=&(\lambda\begin{bmatrix}A_k & X_1 & X_2 &\dots& X_{k-1}\end{bmatrix}+\begin{bmatrix}Y_1&Y_2&\dots&Y_{k-1} & A_0\end{bmatrix})(S_k \otimes I_n)\nonumber\\
 &=&(\begin{bmatrix}\lambda A_k+Y_1 & \lambda X_1+Y_2& \lambda X_2+Y_3&\dots& \lambda X_{k-2}+Y_{k-1}&\lambda X_{k-1}+A_0\end{bmatrix})(S_k \otimes I_n)\nonumber\\
 &=& [\begin{array}{ccc}\lambda A_k+Y_1 & \lambda^2A_k+\lambda (Y_1+X_1)+Y_2 & \lambda^3A_k+\lambda^2(Y_1+X_1)+\lambda (Y_2+X_2)+Y_3\end{array} \nonumber\\
 & &\begin{array}{ccc}\dots&\dots&\lambda^{k-1}A_k+\lambda^{k-2}(Y_1+X_1)+\dots+\lambda(Y_{k-2}+X_{k-2})+Y_{k-1}\end{array}] \nonumber \\
 & = & [\begin{array}{cccc}\lambda A_k+Y_1 & \lambda^2A_k+\lambda A_{k-1}+Y_2 & \dots&\lambda^{k-1}A_k+\lambda^{k-2}A_{k-1}+\dots+\lambda A_2+Y_{k-1}\end{array}] \nonumber \\
 & = &\label{C2} [\begin{array}{cccc}\lambda A_k & \lambda^2A_k+\lambda A_{k-1} & \dots&\lambda^{k-1}A_k+\lambda^{k-2}A_{k-1}+\dots+\lambda A_2\end{array}] + Y_{11}
 \end{eqnarray}
the $2$-nd last equality being due to the fact that $X_i + Y_i = A_{k-i}, i = 1, 2, \ldots, k-1.$ Therefore,
\begin{eqnarray}
\lambda(X_{12} + \hat{C}) & = & \lambda(X_{12} + Y_{11}) + \lambda[\begin{array}{cccc}\lambda A_k & \lambda^2A_k+\lambda A_{k-1} & \dots&\lambda^{k-1}A_k+\lambda^{k-2}A_{k-1}+\dots+\lambda A_2\end{array}] \nonumber \\
& = & [\begin{array}{cccc}\lambda^2 A_k + \lambda A_{k-1} & \lambda^3A_k+\lambda^2 A_{k-1} + \lambda A_{k-2} & \dots& \lambda^k A_k+\lambda^{k-1}A_{k-1}+\dots+\lambda^2 A_2 +\lambda A_1\end{array}] \nonumber \\
&  & \label{C3}.
\end{eqnarray}
Using \eqref{C3} and \eqref{C2} in \eqref{C1},
\begin{eqnarray*}
q_1(\lambda)^T\hat{L}(\lambda) & = & q(\lambda)^T\left(\left[\begin{array}{ccccc} \lambda A_k & \lambda^2A_k + \lambda A_{k-1} & \lambda^3A_k+ \lambda^2 A_{k-1} + \lambda A_{k-2} & \cdots & P(\lambda) - A_0 \end{array}\right]\right.  \\
&  & + \left. \left[\begin{array}{ccccc}  -\lambda A_k &-(\lambda^2A_k + \lambda A_{k-1}) & -(\lambda^3A_k+ \lambda^2 A_{k-1} + A_{k-2}) & \cdots & A_0 \end{array} \right]\right) \\
& = & q(\lambda)^T \left[\begin{array}{ccccc} 0 & 0 & \cdots & 0 & P(\lambda) \end{array}\right] \\
& = & 0.
\end{eqnarray*}

Therefore $q_1(\lambda) \in N_l(\hat{L})$ and hence $\mathcal{L}_v$ is an onto linear map from the vector space $N_l(L)$ to the vector space $N_l(P)$
over $\F(\lambda).$ Now clearly, if $y(\lambda) \in N_l(L)$ is a vector polynomial, then so is $\mathcal{L}_v(y(\lambda)).$ Conversely, if $q(\lambda) \in N_l(P)$ is a vector polynomial, then from \eqref{q1} and \eqref{tildeq} it follows that $\tilde q(\lambda),$ $q_1(\lambda)$ and $y(\lambda)$ are also a vector polynomials. Since  $\mathcal{L}_v(y(\lambda)) = q(\lambda)$ and $y(\lambda) \in N_l(L)$, it follows that $\mathcal{L}_v$ maps the vector polynomials in $N_l(L)$ onto the vector polynomials in $N_l(P).$ To complete the proof we show that if the degree of $q(\lambda)$ is $\delta,$ then $y(\lambda)$ can be chosen so that it has degree $\delta.$

 Let $q(\lambda) \in N_l(P)$ and $y(\lambda) \in N_1(L)$ be vector polynomials such that $\mathcal{L}_v(y(\lambda))=q(\lambda).$ Let $\deg \, q = \delta$
 and suppose $\mathrm{deg} \, y =\hat \delta>\delta.$ Let $q_1(\lambda) = (M^{-T} \otimes I_m)y(\lambda).$ Then $\mathrm{deg} \, q_1 = \hat{\delta}$
 and $$q(\lambda) = \mathcal{L}_v (y(\lambda)) = (v^T \otimes I_m)(M^T \otimes I_m)q_1(\lambda) = (e_1^T \otimes I_m)q_1(\lambda).$$
 This implies that $q_1(\lambda) = \left[\begin{array}{c} q(\lambda) \\ {\tilde q}(\lambda) \end{array} \right]$ where $\mathrm{deg} \, {\tilde q} = \hat{\delta}.$
 Hence $$q_1(\lambda) = \sum_{i = \delta +1}^{\hat{\delta}} \lambda^i \left[\begin{array}{c} 0 \\ t_i \end{array}\right] + \underbrace{\left[\begin{array}{c}
 q(\lambda) \\ \sum_{i = 0}^\delta \lambda^i t_i \end{array}\right]}_{=:\hat{q}(\lambda)}$$
 where $t_i \in \F^{(k-1)m}, i = 0, \ldots, \hat{\delta},$ with $t_{\hat{\delta}} \neq 0.$
 Clearly $\mathrm{deg} \, \hat{q} = \delta$ and
\begin{equation}\label{eta} q(\lambda) = \mathcal{L}_v(y(\lambda)) = (e_1^T \otimes I_m)q_1(\lambda) = (e_1^T \otimes I_m)\hat{q} (\lambda) = (v^T \otimes I_m)
\underbrace{(M^T \otimes I_m)\hat{q}(\lambda)}_{=: \eta( \lambda)} = \mathcal{L}_v(\eta(\lambda)). \end{equation}
Since $\mathrm{deg} \, \hat{q} = {\delta},$ it follows that $\mathrm{deg} \, \eta = {\delta}.$ To complete the proof we show
that $\eta(\lambda) \in N_l(L).$ Now $y(\lambda) \in N_l(L) \Rightarrow q_1(\lambda) \in N_l(\hat{L}).$ Therefore,
\begin{eqnarray}
& & q_1(\lambda)^T\hat{L}(\lambda) = 0 \nonumber \\
& \Rightarrow & \left( \left[\begin{array}{c} 0_m \\ \sum_{i = \delta + 1}^{\hat{\delta}} \lambda^i t_i \end{array} \right]^T + \hat{q}(\lambda)^T \right)
 \left(\lambda\left[\begin{array}{c|c} A_k  &  X_{12} \\\hline & -Z \end{array}\right] +  \left[\begin{array}{c|c} Y_{11}  &  A_0 \\ \hline Z & \end{array}\right]\right) = 0 \nonumber \\
& \Rightarrow & \lambda \left[\begin{array}{c} 0_m \\ \sum_{i = \delta + 1}^{\hat{\delta}} \lambda^i t_i \end{array} \right]^T \left[\begin{array}{c|c} A_k  &  X_{12} \\ \hline & -Z \end{array}\right] + \left[\begin{array}{c} 0_m \\ \sum_{i = \delta + 1}^{\hat{\delta}} \lambda^i t_i \end{array} \right]^T\left[\begin{array}{c|c} Y_{11}  &  A_0 \\\hline Z & \end{array}\right] + \hat{q}(\lambda)^T\hat{L}(\lambda) = 0 \\
& \Rightarrow & \left[\begin{array}{cc} {0}^T_n & -\sum_{i = \delta + 1}^{\hat{\delta}} \lambda^{i+1}t_i^TZ \end{array}\right] + \left[\begin{array}{cc} \sum_{i = \delta + 1}^{\hat{\delta}} \lambda^it_i^TZ & 0_n^T\end{array}\right] + \hat{q}(\lambda)^T\hat{L}(\lambda) = 0 \nonumber \\
 & \Rightarrow & \lambda^{\delta + 1}\left[ t_{\delta +1}^TZ \quad 0_n^T\right] + \lambda^{\delta + 2}\left(\left[ t_{\delta + 2}^TZ \quad 0_n^T\right] +
 \left[ 0_n^T \quad -t_{\delta + 1}^TZ\right]\right) + \cdots \nonumber \\
 & &\label{finaldeg} \cdots + \lambda^{\hat{\delta}}\left( \left[ t_{\hat{\delta}}^TZ \quad 0_n^T\right] + \left[ 0_n^T \quad -t_{\hat{\delta} - 1}^T Z\right]\right) + \lambda^{\hat{\delta} + 1}\left[0_n^T \quad -t_{\hat{\delta}}^TZ  \right] + \hat{q}(\lambda)^T\hat{L}(\lambda) = 0.
\end{eqnarray}
Since the degree of $\hat{q}(\lambda)^T \hat{L}(\lambda)$ is atmost $\delta + 1,$ equating the coefficients of $\lambda^i, i = \delta + 2, \ldots, \hat{\delta} + 1$ in \eqref{finaldeg} to $0,$ we have $t_i^TZ = 0$ for $i = \delta + 1, \ldots, \hat{\delta}.$ Therefore \eqref{finaldeg} implies that $\hat{q}(\lambda)^T\hat{L}(\lambda) = 0$ and this completes the proof as
$$\eta(\lambda)^TL(\lambda) = \hat{q}(\lambda)^T(M \otimes I_m)L(\lambda) = \hat{q}(\lambda)^T\hat{L}(\lambda) = 0.$$
\end{proof}

\begin{lemma}\label{sbasis} Let $P(\lambda) = \sum_{i=0}^k \lambda^iA_i$ be an $m \times n$ matrix polynomial of grade $k$ with $m \geq n$ and let $L(\lambda) \in \L_1(P)$ correspond to a nonzero right ansatz vector $v \in \F^k$ with full Z-rank. Let $r = \mathrm{nrank}(P(\lambda)),$ $c = (k-1)(m-n)$ and $\mathcal{L}_v(y(\lambda)) = (v^T \otimes I_m)y(\lambda)$ for all $y(\lambda) \in N_l(L).$ Then there exists a minimal basis of $N_l(L)$ of the form $$\{y_1(\lambda),\dots,y_{m-r}(\lambda),u_{m-r+1},\dots,u_{m-r+c}\},$$ where  $\{u_{m-r+1},\dots,u_{m-r+c}\} \subset \F^{km}$ is a basis of $N(\mathcal {L}_v).$
\end{lemma}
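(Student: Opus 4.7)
The plan is to build a basis of $N_l(L)$ by combining a degree-preserving lift of a minimal basis of $N_l(P)$ with an $\F(\lambda)$-basis of $N(\mathcal{L}_v)$ consisting of constant vectors. First I would compute the dimension of $N(\mathcal{L}_v)$: by Theorem~\ref{Zthm}, full $Z$-rank forces $L(\lambda)$ to be a g-linearization, so $\mathrm{nrank}\,L = r + (k-1)n$ and hence $\dim_{\F(\lambda)} N_l(L) = mk - r - (k-1)n = (m-r) + c$. Combined with the surjectivity of $\mathcal{L}_v$ from Lemma~\ref{mainl}, rank-nullity gives $\dim_{\F(\lambda)} N(\mathcal{L}_v) = c$.

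To exhibit $c$ constant vectors spanning $N(\mathcal{L}_v)$, choose nonsingular $M$ with $Mv = \alpha e_1$ ($\alpha \neq 0$) and work with $\hat L(\lambda) := (M \otimes I_m) L(\lambda) \in \L_1(P)$, which has the block form \eqref{Zrank}. A constant $\tilde u = [u_1^T, u_2^T]^T \in \F^{km}$ (with $u_1 \in \F^m$, $u_2 \in \F^{(k-1)m}$) lies in $N_l(\hat L) \cap N(\mathcal{L}_{\alpha e_1})$ iff $u_1 = 0$ (from $\mathcal{L}_{\alpha e_1}(\tilde u) = \alpha u_1 = 0$) and $u_2^T Z = 0$ (the single remaining condition, since both the $\lambda^0$ and $\lambda^1$ coefficients of $\tilde u^T \hat L(\lambda)$ collapse to it). As $\rank Z = (k-1)n$, the admissible $u_2$ form an $\F$-space of dimension $(k-1)m - (k-1)n = c$; pulling back by $(M^T \otimes I_m)$ yields $c$ $\F$-linearly independent constants $u_{m-r+1}, \ldots, u_{m-r+c} \in \F^{km}$ in $N(\mathcal{L}_v)$, and being constants they are also $\F(\lambda)$-linearly independent, so by dimension they form a basis of $N(\mathcal{L}_v)$.

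Now take a minimal basis $\{q_1(\lambda), \ldots, q_{m-r}(\lambda)\}$ of $N_l(P)$ with degrees $\epsilon_1 \leq \cdots \leq \epsilon_{m-r}$. By the degree-preserving surjectivity in Lemma~\ref{mainl}, there exist $y_i(\lambda) \in N_l(L)$ with $\deg y_i = \epsilon_i$ and $\mathcal{L}_v(y_i) = q_i$. The set $\mathcal{B} := \{y_1, \ldots, y_{m-r}, u_{m-r+1}, \ldots, u_{m-r+c}\}$ has $m-r+c = \dim_{\F(\lambda)} N_l(L)$ elements and is $\F(\lambda)$-linearly independent: applying $\mathcal{L}_v$ to any relation $\sum_i a_i(\lambda) y_i + \sum_j b_j(\lambda) u_j = 0$ gives $\sum_i a_i(\lambda) q_i = 0$, forcing every $a_i = 0$ by minimality of the $q_i$; the residual $\sum_j b_j(\lambda) u_j = 0$ then forces every $b_j = 0$ since the $u_j$ form a basis of $N(\mathcal{L}_v)$. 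Hence $\mathcal{B}$ is an $\F(\lambda)$-basis of $N_l(L)$.

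To confirm that $\mathcal{B}$ is minimal, I would invoke Forney's criterion~\cite{For75}: a polynomial basis of a rational subspace is minimal iff its degree sum equals the sum of the minimal indices. The degree sum of $\mathcal{B}$ is $\sum_{i=1}^{m-r} \epsilon_i$. Since $L(\lambda)$ is a g-linearization, there exist unimodular $E(\lambda), F(\lambda)$ with $E L F = P \oplus (I_{k-1} \otimes I_{m,n})$; unimodular equivalence preserves left minimal indices, and those of a block diagonal polynomial matrix form the multiset union of the indices of the blocks. The factor $I_{k-1} \otimes I_{m,n}$ has a left null space spanned over $\F(\lambda)$ by $c$ constants, contributing $c$ zeros, so the left minimal indices of $L(\lambda)$ are $\epsilon_1, \ldots, \epsilon_{m-r}$ together with $c$ zeros, summing to $\sum_i \epsilon_i$, which matches the degree sum of $\mathcal{B}$. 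The principal technical obstacle is identifying $N(\mathcal{L}_v)$ as spanned by constants; once the full-$Z$-rank condition is translated into the equation $u_2^T Z = 0$, this reduces to a dimension count and the remainder is routine assembly.
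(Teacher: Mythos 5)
Your construction of the basis itself follows the paper's route and is sound: the dimension count $\dim N_l(L) = (m-r)+c$, the rank--nullity computation $\dim N(\mathcal{L}_v)=c$, the identification of a basis of $N(\mathcal{L}_v)$ by constant vectors $(M^T\otimes I_m)\left[\begin{smallmatrix}0\\ u_2\end{smallmatrix}\right]$ with $u_2^TZ=0$, the degree-preserving lifts $y_i(\lambda)$ supplied by Lemma~\ref{mainl}, and the linear-independence argument obtained by applying $\mathcal{L}_v$ to a dependence relation are all essentially the same steps the paper takes, and they are correct.

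The gap is in your final minimality step. You assert that unimodular equivalence preserves left minimal indices, and conclude that the left minimal indices of $L(\lambda)$ are those of $P(\lambda)$ together with $c$ zeros. Unimodular equivalence does \emph{not} preserve minimal indices: if $E(\lambda)L(\lambda)F(\lambda)=\mathrm{diag}\bigl(P(\lambda),\,I_{k-1}\otimes I_{m,n}\bigr)$, the induced isomorphism $y(\lambda)\mapsto E(\lambda)^{-T}y(\lambda)$ between the left null spaces multiplies by a polynomial (unimodular) matrix and can change degrees, so the index lists of the two equivalent polynomials need not agree. A concrete counterexample to your principle: for a square singular $P(\lambda)$, both Frobenius companion forms $C_1(\lambda)$ and $C_2(\lambda)$ are unimodularly equivalent to $\mathrm{diag}(P(\lambda),I)$, yet $C_1$ keeps the left minimal indices of $P$ and shifts the right ones by $k-1$, while $C_2$ does the opposite; likewise, in this paper Theorem~\ref{rminbasis} shows the right minimal indices of $L$ are those of $P$ shifted by $k-1$, even though the block-diagonal form has the same right minimal indices as $P$. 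Moreover, the fact you are invoking --- that the left minimal indices of $L(\lambda)$ are those of $P(\lambda)$ padded with $c$ zeros --- is precisely Theorem~\ref{lminbasis}, which the paper deduces \emph{from} this lemma, so no independent shortcut through unimodular equivalence is available. The paper instead proves minimality of $\beta$ by contradiction: if some minimal basis $\hat\beta$ of $N_l(L)$ had total degree smaller than $\sum_i\epsilon_i$, then, since $\mathcal{L}_v$ never increases degree and $\mathcal{L}_v(\hat\beta)$ spans $N_l(P)$, one could extract from $\mathcal{L}_v(\hat\beta)$ a polynomial basis of $N_l(P)$ of order less than $\sum_i\epsilon_i$, contradicting the minimality of the chosen basis of $N_l(P)$. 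Replacing your Forney-criterion step by this argument (the rest of your proof unchanged) closes the gap.
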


\begin{proof} Since $L(\lambda)$ is a g-linearization of $P(\lambda)$ and $m\geq n,$ therefore $\mathrm{dim} (N_l(L)) = \mathrm{dim} (N_l(P)) + (k-1)(m-n).$
Let $r = \mathrm{nrank} P(\lambda)$ and $\{ v_1(\lambda), \ldots, v_{m-r}(\lambda)\}$ be a minimal basis of $N_l(P)$ with $\mathrm{deg} \, v_j = \delta_j$ for $j = 1, \ldots, m-r.$
%Then the sum of the degrees of the vectors in any polynomial basis of $N_l(L)$ cannot be less than $\sum_{i=1}^{m-r} \delta_i.$
By Lemma~\ref{mainl}, there exist linearly independent vectors $y_1(\lambda), \ldots, y_{m-r}(\lambda) \in N_l(L)$ such that $$\mathcal{L}_v (y_j(\lambda)) = v_j(\lambda), \mbox{ and } \mathrm{deg} \, y_j = \delta_j.$$
Also from Lemma~\ref{mainl} we have, $\rank \, \mathcal{L}_v = \mathrm{dim} \, N_l(P),$ and hence $\mathrm{dim}(N(\mathcal{L}_v)) = (k-1)(m-n) = c.$
Let $M \in \F^{k \times k}$ be a nonsingular matrix such that $Mv = e_1$ and $\hat{L}(\lambda) := (M \otimes I_m)L(\lambda).$
 Since $L(\lambda)$ has full Z-rank, $\hat{L}(\lambda) = \lambda \left[\begin{array}{c|c} A_k & X_{12} \\\hline & -Z \end{array}\right] + \left[\begin{array}{c|c} Y_{11} & A_0 \\\hline Z & \end{array}\right],$ where $\rank \, Z = (k-1)n.$ We show that there exists a basis of $N(\mathcal{L}_v)$ consisting vectors of the form $(M^T \otimes I_m)\left[\begin{array}{c} 0 \\ v \end{array}\right]$ such that $v^TZ = 0.$ Since $\rank \, Z = (k-1)n,$ there exist $c$ linearly independent vectors $v_1, \ldots, v_c \in \F^{(k-1)m}$ such that $v_i^TZ = 0$ for all $i = 1, \ldots, c.$ Therefore $\beta_v := \left\{ (M^T \otimes I_m)\left[\begin{array}{c} 0 \\ v_1 \end{array}\right], \ldots, (M^T \otimes I_m)\left[\begin{array}{c} 0 \\ v_c \end{array}\right]\right\}$ is a linearly independent subset of $N(\mathcal{L}_v)$ as
\begin{eqnarray*}
& & \left((M^T \otimes I_m)\left[\begin{array}{c} 0 \\ v_i \end{array}\right]\right)^TL(\lambda) = \left[\begin{array}{c} 0 \\ v_i \end{array}\right]^T(M \otimes I_m)L(\lambda) = \left[\begin{array}{c} 0 \\ v_i \end{array}\right]^T\hat{L}(\lambda) = v_i^TZ = 0 \\
& \text{and} & (v^T \otimes I_m)(M^T \otimes I_m)\left[\begin{array}{c} 0 \\ v_i \end{array}\right] =
(e_1^T \otimes I_m)\left[\begin{array}{c} 0 \\ v_i \end{array}\right] = 0
\end{eqnarray*}
for all $i = 1, \ldots, c.$ Clearly $\beta_v$ is also a basis of $N(\mathcal{L}_v)$ as it has $c$ linearly independent vectors. Let $$\beta = \{y_1(\lambda), \ldots, y_{m-r}(\lambda), u_{m-r+1}, \ldots, u_{m-r+c}\}$$ where $u_{m-r+j} :=  (M^T \otimes I_m)\left[\begin{array}{c} 0 \\ v_j \end{array}\right],$ for $j = 1, \ldots, c.$ Since $\rank \, \mathcal{L}_v = \mathrm{dim}(N_l(P)) = m-r,$ $\beta$ has $\mathrm{dim}(N_l(L))$ vectors. Therefore
 $\beta$ is a basis of $N_l(L)$ if it is a linearly independent set. Suppose there exist $a_1(\lambda), \ldots, a_{m-r+c}(\lambda) \in \F(\lambda)$ such that
\begin{equation}\label{minbasis}
a_1(\lambda)y_1(\lambda) + \cdots + a_{m-r}(\lambda)y_{m-r}(\lambda) + a_{m-r+1}(\lambda)u_{m-r+1} + \cdots + a_{m-r+c}(\lambda)u_{m-r+c} = 0.
\end{equation}
Then, $\mathcal{L}_v(a_1(\lambda)y_1(\lambda) + \cdots + a_{m-r}(\lambda)y_{m-r}(\lambda)) = 0 \Rightarrow a_1(\lambda)v_1(\lambda) + \cdots + a_{m-r}(\lambda)v_{m-r}(\lambda) =0.$ \\
This gives $a_j(\lambda) = 0$ for $j = 1, \ldots, m-r$ as $\{v_1(\lambda), \ldots, v_{m-r}(\lambda)\}$ is a basis of $N_l(P).$ So from \eqref{minbasis} we have $a_{m-r+1}(\lambda)u_{m-r+1} + \cdots + a_{m-r+c}(\lambda)u_{m-r+c} = 0$ which implies that $a_j(\lambda) = 0$ for $j = m-r+1, \ldots, m-r+c$ as $\beta_v$ is a basis of $N(\mathcal{L}_v).$  Hence $\beta$ is a basis of $N_l(L).$  Suppose that it is not a minimal basis of $N_l(L).$ Since the sum of the degrees of the polynomials in $\beta$ is $\sum_{j=1}^{m-r} \delta_j,$ there exists a minimal basis
$\hat{\beta} := \{ \hat{y}_1(\lambda), \ldots, \hat{y}_{m-r+c}(\lambda) \}$ of $N_l(L)$ such that $\sum_{i = 1}^{m-r+c} \mathrm{deg} \, \hat{y}_i < \sum_{i=1}^{m-r}\delta_i.$ Then $\mathcal{L}_v(\hat{\beta})$ is a spanning set in $N_l(P).$ Let $\{\mathcal{L}_v(\hat{y}_{i_1}(\lambda)), \ldots, \mathcal{L}_v(\hat{y}_{i_{m-r}}(\lambda))\} \subset \mathcal{L}_v(\hat{\beta})$ be a basis of $N_l(P).$ Then $$\sum_{j = 1}^{m-r} \mathrm{deg} \, \mathcal{L}_v(\hat{y}_{i_j}) < \sum_{j=1}^{m-r} \delta_j = \sum_{j=1}^{m-r}\mathrm{deg} \, y_j =\sum_{j=1}^{m-r}\mathrm{deg} \, v_j.$$
But this contradicts the assumption that $\{ v_1(\lambda), \ldots, v_{m-r}(\lambda)\}$ is a minimal basis of $N_l(P).$ Hence the proof.
\end{proof}

The following theorem now shows how the left minimal indices and bases of an $m \times n$ matrix polynomial $P(\lambda)$ with $m \geq n$ can be extracted from those of $L(\lambda) \in \L_1(P)$ with full Z-rank.

\begin{theorem}\label{lminbasis} Let $P(\lambda)$ be an $m \times n$ matrix polynomial of grade $k$ with $m \geq n,$ and let $L(\lambda) \in \L_1(P)$ corresponding to nonzero right ansatz vector $v \in \F^k$ be of full Z-rank. Let $r = \mathrm{nrank} \, P(\lambda),$ $c = (k-1)(m-n)$ and $\mathcal{L}_v(y(\lambda)) = (v^T \otimes I_m)y(\lambda)$ for all $y(\lambda) \in N_l(L).$
 If $\beta=\{y_1(\lambda),\dots,y_{m-r}(\lambda),u_{m-r+1},\dots,u_{m-r+c}\}$ be a minimal basis of $N_l(L)$ satisfying the properties of Lemma~\ref{sbasis}, then $\{\mathcal L_v(y_1(\lambda)),\dots,\mathcal L_v(y_{m-r}(\lambda))\}$ is a minimal basis of $N_l(P)$. Moreover if $\delta_1\geq \delta_2\geq\dots\geq \delta_{m-r}\geq 0=\underbrace{0=\dots=0}_{c \,\, \text{zeros}}$ are the left minimal indices of $L(\lambda)$ then $\delta_1\geq \delta_2\geq\dots\geq \delta_{m-r}$ are the left minimal indices of $P(\lambda)$.
\end{theorem}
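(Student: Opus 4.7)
The plan is to reduce the statement to Lemma~\ref{mainl} and the linear-independence argument used at the end of Lemma~\ref{sbasis}: apply $\mathcal L_v$ to $\beta$ to extract a spanning set of $N_l(P)$, and then compare sums of degrees to force both minimality and equality of indices.

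First I would verify that $\{\mathcal L_v(y_1(\lambda)),\dots,\mathcal L_v(y_{m-r}(\lambda))\}$ is a basis of $N_l(P)$. Since $\beta$ is a basis of $N_l(L)$ and $\mathcal L_v$ is onto $N_l(P)$ by Lemma~\ref{mainl}, the image $\mathcal L_v(\beta)$ spans $N_l(P)$; by construction $u_{m-r+j}\in N(\mathcal L_v)$ for $j=1,\dots,c$, so this image reduces to $\{\mathcal L_v(y_j(\lambda))\}_{j=1}^{m-r}$, which has cardinality $m-r=\dim N_l(P)$ and is therefore a basis.

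Next I would compare total degrees via a two-sided squeeze. Let $\eta_1\geq\cdots\geq\eta_{m-r}$ be the left minimal indices of $P(\lambda)$, realized by some minimal basis $\{v_1(\lambda),\dots,v_{m-r}(\lambda)\}$ of $N_l(P)$ with $\deg v_j=\eta_j$. Using Lemma~\ref{mainl}, lift each $v_j$ to $\tilde y_j(\lambda)\in N_l(L)$ with $\mathcal L_v(\tilde y_j)=v_j$ and $\deg \tilde y_j=\eta_j$. Applying verbatim the linear-independence argument in the proof of Lemma~\ref{sbasis}, the set $\{\tilde y_1(\lambda),\dots,\tilde y_{m-r}(\lambda),u_{m-r+1},\dots,u_{m-r+c}\}$ is a basis of $N_l(L)$. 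Minimality of $\beta$ then yields
$$ \sum_{j=1}^{m-r}\delta_j \leq \sum_{j=1}^{m-r}\eta_j. $$
On the other hand, since $\{\mathcal L_v(y_j(\lambda))\}$ is a basis of $N_l(P)$, minimality of the $\eta_j$ gives $\sum_j\eta_j \leq \sum_j\deg\mathcal L_v(y_j)$, and $\deg\mathcal L_v(y_j)\leq\deg y_j=\delta_j$ yields $\sum_j\deg\mathcal L_v(y_j)\leq\sum_j\delta_j$. Chaining these with the previous inequality collapses all three sums to the same value.

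From this equality chain, two conclusions drop out simultaneously. First, $\{\mathcal L_v(y_j(\lambda))\}$ is a minimal basis of $N_l(P)$, because its total degree equals the minimum $\sum_j\eta_j$. Second, the pointwise inequality $\deg\mathcal L_v(y_j)\leq\delta_j$ together with equality of sums forces $\deg\mathcal L_v(y_j)=\delta_j$ for each $j$; sorting then shows that the left minimal indices of $P(\lambda)$ are precisely $\delta_1\geq\cdots\geq\delta_{m-r}$. The main obstacle I anticipate is the lifting step in the comparison: one must invoke Lemma~\ref{mainl} to get degree-preserving lifts, and then reproduce the argument from Lemma~\ref{sbasis} to confirm that appending the constant vectors $u_{m-r+j}$ to these lifts yields a genuine basis of $N_l(L)$. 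Once this is in hand, the remainder is bookkeeping with degrees.
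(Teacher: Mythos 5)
Your proposal is correct and uses the same two ingredients as the paper's proof: the degree-preserving lifts of Lemma~\ref{mainl} and the Lemma~\ref{sbasis} argument that appending the vectors $u_{m-r+j}$ to such lifts gives a basis of $N_l(L)$, compared against the minimality of $\beta$. The only real difference is organizational: the paper argues by contradiction with a hypothetical lower-degree basis of $N_l(P)$, whereas you run a direct two-sided squeeze on total degrees; a small bonus of your version is that the squeeze also yields the pointwise equality $\deg \mathcal{L}_v(y_j)=\delta_j$, which the paper merely asserts as ``evident.''
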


\begin{proof}
Since $\{u_{m-r+1},\ldots,u_{m-r+c}\}$ forms a basis of $N(\mathcal{L}_v)$, hence $\mathcal{L}_v(u_i) = 0$ for $i=m-r+1,\ldots, m-r+c$.
Let $z_i(\lambda) = \mathcal{L}_v(y_i(\lambda))$ for $i=1, \ldots, m-r.$ Evidently, $\mathrm{deg} \, y_i = \mathrm{deg} \, z_i$  for all $i=1,\dots,m-r$
and $\{ z_1(\lambda), \ldots, z_{m-r}(\lambda)\}$ is a basis of $N_l(P).$ Now if it is not a minimal basis of $N_l(P),$ then there exists a basis $\{ \hat{z}_1(\lambda), \ldots, \hat{z}_{m-r}(\lambda)\}$ of $N_l(P)$ such that $\mathrm{deg} \, \hat{z}_{j_0} < \mathrm{deg} \, z_{j_0}$ for some $1 < j_0 < m-r.$
Consequently, by Lemma~\ref{mainl} and Lemma~\ref{sbasis}, there exists
a basis $$\hat{\beta} := \left\{ \hat{y}_1(\lambda), \ldots, \hat{y}_{m-r}(\lambda), u_{m-r+1}, \ldots, u_{m-r+c}\right\}$$ of $N_l(L)$ such that $\mathcal{L}_v(\hat{y}_j(\lambda)) = \hat{z}_j(\lambda)$ and $\mathrm{deg} \, \hat{y}_j = \mathrm{deg} \, \hat{z}_j$ for $j = 1, \ldots, m-r.$  The sum of the degrees of the vector polynomials in $\hat{\beta}$ are clearly lower than that of the ones in $\beta$ as, $\mathrm{deg} \, \hat{z}_{j_0} < \mathrm{deg} \, z_{j_0}.$ But this contradicts the fact that $\beta$ is a minimal basis of $N_l(L).$ Hence the proof follows. \end{proof}

\begin{remark}\label{conslminbasis}
A minimal basis of $N_l(P)$ may be extracted from a basis of $N_l(L)$ that satisfies the assumptions of Lemma~\ref{sbasis}. We outline the steps for constructing such a basis from \emph{any} given minimal basis of $N_l(L).$

\begin{enumerate}
 \item Let $\{y_1(\lambda),\dots,y_t(\lambda),y_{t+1}(\lambda),\dots, y_{m-r+c}(\lambda)\}$ be a minimal basis of $N_l(L)$ with $\mathrm{deg} \, y_{i} = d_i$ such that $d_1 \geq d_2 \geq \cdots \geq d_{m-r+c}.$ Without loss of generality we may assume that there exists $1 \leq t \leq m-r$ such that $d_i > 0$ for $i=1,\dots,t$ and $d_i = 0$ for $ i = t+1, \ldots, m-r+c.$

 \item Consider $u_{m-r+j}=(M^T\otimes I_m)\left[\begin{array}{c} 0 \\ v_j \end{array}\right]$, for $j=1,\dots,c$ such that $\{v_1,\dots,v_c\}$ forms a basis of left null space of $Z$. Such a basis may be obtained by choosing $v_1, \ldots, v_c$ to be the complex conjugate transpose of the last $c$ columns of the matrix $Q$ of a $QR$ decomposition of $Z.$ Then $u_{m-r+j}$ belongs to $N_l(L)$ and $\mathcal{L}_v(u_{m-r+j})=0$ for $j=1,\dots,c$.

 \item Clearly the set $\beta=\{y_1(\lambda),\dots,y_t(\lambda),u_{m-r+1},\dots,u_{m-r+c}\}$ is linearly independent. Check if  $y_{t+1}(\lambda)$ belongs to $\mathrm{span}(\beta),$ and include it in $\beta$ if this is not the case. Repeat the process for $i = t+2, \ldots, m-r+c$ with respect to the updated $\beta$ after each step.
\end{enumerate}
\end{remark}

\begin{remark}\label{genericlminbasis} There are situations when left minimal bases of $L(\lambda) \in \L_1(P)$
will generically satisfy the assumptions in Lemma~\ref{sbasis}. For example if  $n < m \leq (k+1)n,$ then generically, $\mathcal{A} := \left[\begin{array}{ccc} A_0 & \cdots & A_k \end{array}\right]$ is a full rank matrix and consequently, none of the left minimal indices of $P(\lambda)$ are zero. Consequently, there does not exist any vector polynomial of degree zero in a left minimal basis of $L(\lambda)$ that does not belong to $N(\mathcal{L}_v).$ This implies that any such basis must satisfy the assumptions of Lemma~\ref{sbasis}.
\end{remark}

The preceding results imply that if $P(\lambda) \in \F[\lambda]^{m \times n}$ with $m\leq  n,$ then the right minimal bases and indices of $P(\lambda)$ can be extracted from those of $L(\lambda) \in \L_2(P)$ of full Z-rank. In particular we have in this case the following counterpart of Lemma~\ref{mainl}  which can either be proved by arguing as in the proof of Lemma~\ref{mainl} or by using the relation~\eqref{l1tol2}.

\begin{lemma}
 Let $P(\lambda) = \sum_{i = 0}^k \lambda^iA_i$ be an $m\times n$ matrix polynomial of grade $k$ with $m \leq n.$ Suppose $L(\lambda)\in\mathbb{L}_2(P)$ has full $Z$-rank and nonzero left ansatz vector $w \in \F^k$. Then the mapping
 \begin{eqnarray*}
 \mathcal{L}_w:N_r(L) &\rightarrow& N_r(P)\\
 x(\lambda) &\mapsto& (w^T\otimes I_n)x(\lambda)
 \end{eqnarray*}
 is a linear map from the vector space $N_r(L)$ onto the vector space $N_r(P)$ over $\F(\lambda).$ Furthermore it is an onto map from the vector polynomials in $N_r(L)$ to the vector polynomials in $N_r(P)$ with the property that if $q(\lambda) \in N_r(P)$ is a vector polynomial of degree $\delta,$ then there exists a vector polynomial $x(\lambda) \in N_r(L)$ of degree $\delta$ such that $\mathcal{L}_w(x(\lambda)) = q(\lambda).$
\end{lemma}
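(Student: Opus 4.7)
The plan is to deduce this lemma directly from Lemma~\ref{mainl} via the transposition correspondence~\eqref{l1tol2}, so that no separate computation is needed. Set $\tilde L(\lambda) := L(\lambda)^T$. By \eqref{l1tol2}, $\tilde L(\lambda) \in \L_1(P^T)$, and the vector $w$ now plays the role of a right ansatz vector for $\tilde L(\lambda)$ with respect to the polynomial $P^T(\lambda) \in \F[\lambda]^{n \times m}$. Since $m \leq n$ we have $n \geq m$, which places us in the hypotheses of Lemma~\ref{mainl} applied to the pair $(\tilde L, P^T)$ (with the roles of $m$ and $n$ in the statement of Lemma~\ref{mainl} played by $n$ and $m$ respectively).

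First I would verify that $\tilde L(\lambda)$ has full $Z$-rank in $\L_1(P^T)$. Choose a nonsingular $\hat M$ with $\hat M w = \alpha e_1$ as in Theorem~\ref{Zthm2}; by assumption, the reduction~\eqref{hZrank} of $L(\lambda)$ produces an $\hat Z \in \F^{(k-1)m \times (k-1)n}$ of full row rank $(k-1)m$. Transposing this reduction expresses $(\hat M \otimes I_n)\tilde L(\lambda)$ in the form~\eqref{Zrank} whose $Z$-block equals $-\hat Z^T \in \F^{(k-1)n \times (k-1)m}$, and $\rank(-\hat Z^T)=(k-1)m$, which is full rank for the $\L_1(P^T)$ setting. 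Hence $\tilde L(\lambda)$ has full $Z$-rank in $\L_1(P^T)$.

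Next I would identify the null spaces and maps. Since $x(\lambda)^T L(\lambda)^T = (L(\lambda)x(\lambda))^T$, we have the natural equalities $N_r(L) = N_l(\tilde L)$ and $N_r(P) = N_l(P^T)$ of $\F(\lambda)$-vector spaces, with vector polynomials corresponding to vector polynomials of the same degree. Under these identifications, the map $\mathcal L_w\colon x(\lambda) \mapsto (w^T \otimes I_n)\, x(\lambda)$ of the present statement is literally the map $y(\lambda) \mapsto (w^T \otimes I_n)\, y(\lambda)$ from $N_l(\tilde L)$ to $N_l(P^T)$ furnished by Lemma~\ref{mainl}.

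Applying Lemma~\ref{mainl} to the pair $(\tilde L, P^T)$ therefore yields, all at once, $\F(\lambda)$-linearity, surjectivity onto $N_l(P^T)=N_r(P)$, surjectivity from vector polynomials in $N_l(\tilde L)=N_r(L)$ onto vector polynomials in $N_r(P)$, and the degree-preserving lift property for any $q(\lambda) \in N_r(P)$ of degree $\delta$. Translating back via the identifications completes the proof. There is no genuine obstacle here; the only item requiring care is the bookkeeping that converts the full $Z$-rank condition in \eqref{hZrank} for $L \in \L_2(P)$ (with $\hat Z$ of full row rank $(k-1)m$) into the full $Z$-rank condition in \eqref{Zrank} for $\tilde L \in \L_1(P^T)$ (with $-\hat Z^T$ of full column rank $(k-1)m$), which is exactly the setting required by Lemma~\ref{mainl}. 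Alternatively, one could mimic the proof of Lemma~\ref{mainl} verbatim with right/left, rows/columns, and pre/post multiplications swapped, but the transposition argument above avoids all such duplication.
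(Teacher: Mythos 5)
Your argument is correct and follows exactly the route the paper itself indicates for this lemma, namely deducing it from Lemma~\ref{mainl} via the correspondence~\eqref{l1tol2}, including the needed check that full $Z$-rank of $L(\lambda)\in\L_2(P)$ transfers to full $Z$-rank of $L(\lambda)^T\in\L_1(P^T)$. The only blemish is cosmetic: under the convention of~\eqref{Zrank} the $Z$-block of $(\hat M\otimes I_n)L(\lambda)^T$ is $\hat Z^T$ rather than $-\hat Z^T$, which of course does not affect the rank argument.
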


Therefore, by arguing as in the proof of Lemma~\ref{sbasis}, there exists a minimal basis of $N_r(L)$ of the form $$\{x_1(\lambda), \dots, x_{n-r}(\lambda), u_{n-r+1}, \dots, u_{n-r+c}\},$$ where  $c = (n-m)(k-1)$ and $\{u_{n-r+1},\dots,u_{n-r+c}\} \subset \F^{kn}$ is a basis of $N(\mathcal {L}_w).$ This leads to the following theorem for extracting the right minimal indices and bases of $P(\lambda)$ from those of $L(\lambda) \in \L_2(P)$ with full Z-rank.

\begin{theorem}\label{rminbasis2} Let $P(\lambda)$ be an $m \times n$ matrix polynomial of grade $k$ with $m\leq n,$ and $L(\lambda) \in \L_2(P)$ with corresponding nonzero left ansatz vector $w \in \F^k$ be of full Z-rank. Let $r = \mathrm{nrank} \, P(\lambda),$ $c = (k-1)(n-m)$ and $\mathcal{L}_w(x(\lambda)) = (w^T \otimes I_n)x(\lambda)$ for all $x(\lambda) \in N_r(L).$
 If $\beta=\{x_1(\lambda),\dots, x_{n-r}(\lambda),u_{n-r+1},\dots,u_{n-r+c}\}$ be a minimal basis of $N_r(L)$ such that $\{u_{n-r+1},\dots,u_{n-r+c}\} \subset \F^{kn}$ is a basis of $N(\mathcal {L}_w),$ then the set $\{\mathcal L_w(x_1(\lambda)),\dots,\mathcal L_w(x_{n-r}(\lambda))\}$ is a minimal basis of $N_r(P)$. Moreover if $\delta_1\geq \delta_2\geq\dots\geq \delta_{n-r}\geq 0=\underbrace{0=\dots=0}_{c \,\, \text{zeros}}$ are the right minimal indices of $L(\lambda)$ then $\delta_1\geq \delta_2\geq\dots\geq \delta_{n-r}$ are the right minimal indices of $P(\lambda)$.
\end{theorem}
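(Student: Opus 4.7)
The approach is to mirror the proof of Theorem~\ref{lminbasis} verbatim, substituting the $\mathbb{L}_2$--analog of Lemma~\ref{mainl} (stated immediately before the theorem) for the $\mathbb{L}_1$--version and right null spaces for left null spaces. Alternatively one could transfer the result through the duality~\eqref{l1tol2}: if $m \le n$ then $P(\lambda)^T \in \F[\lambda]^{n\times m}$ with $n \ge m$, $L(\lambda)^T \in \L_1(P^T)$ has the same $Z$-rank as $L(\lambda)$, and the right minimal bases/indices of $P$ coincide with the left minimal bases/indices of $P^T$; Theorem~\ref{lminbasis} then yields the conclusion. Both routes work, and I would follow the direct route since it avoids the bookkeeping of tracking how the ansatz vector transforms under transposition.

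First I would set $z_i(\lambda) := \mathcal{L}_w(x_i(\lambda))$ for $i=1,\ldots,n-r$, and note that $\mathcal{L}_w(u_{n-r+j}) = 0$ for $j=1,\ldots,c$ by hypothesis. Since the $\mathbb{L}_2$--analog of Lemma~\ref{mainl} guarantees that $\mathcal{L}_w$ is surjective onto $N_r(P)$ and that $\dim N_r(P) = n-r$, the image $\mathcal{L}_w(\beta) = \{z_1,\ldots,z_{n-r}\} \cup \{0,\ldots,0\}$ spans $N_r(P)$ and hence $\{z_1,\ldots,z_{n-r}\}$ is a basis of $N_r(P)$. Because $(w^T \otimes I_n)$ is a constant matrix, $\mathcal{L}_w$ cannot increase degrees, so $\deg z_i \le \deg x_i$ for each $i$.

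Next, to prove minimality of $\{z_1,\ldots,z_{n-r}\}$, I would argue by contradiction. Suppose there exists a minimal basis $\{\hat z_1,\ldots,\hat z_{n-r}\}$ of $N_r(P)$ with strictly smaller total degree than $\{z_1,\ldots,z_{n-r}\}$. Using the $\mathbb{L}_2$--analog of Lemma~\ref{mainl}, lift each $\hat z_j$ to a vector polynomial $\hat x_j \in N_r(L)$ with $\deg \hat x_j = \deg \hat z_j$ and $\mathcal{L}_w(\hat x_j) = \hat z_j$. Exactly the independence argument of Lemma~\ref{sbasis} (now applied to $\mathcal{L}_w$ in place of $\mathcal{L}_v$) shows that $\hat\beta := \{\hat x_1,\ldots,\hat x_{n-r},u_{n-r+1},\ldots,u_{n-r+c}\}$ is a basis of $N_r(L)$. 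Its total degree is $\sum_j \deg \hat z_j < \sum_i \deg x_i$, contradicting the minimality of $\beta$. Once $\{z_1,\ldots,z_{n-r}\}$ is known to be minimal, equating the sums of degrees together with $\deg z_i \le \deg x_i$ forces $\deg z_i = \deg x_i$ for every $i$, and matching the two lists in descending order yields the claimed equality of minimal indices.

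The only step with any subtlety is verifying that the independence argument of Lemma~\ref{sbasis} still applies: one needs $N_r(L)$ to decompose as the sum of $\mathrm{span}_{\F(\lambda)}\{\hat x_1, \ldots, \hat x_{n-r}\}$ and $N(\mathcal{L}_w)$, which is immediate since $\mathcal{L}_w(\hat x_j) = \hat z_j$ are $\F(\lambda)$-independent while the $u_j$'s span $N(\mathcal{L}_w)$ and $\dim N_r(L) = (n-r) + c$. No genuine new obstacle appears; the proof is a faithful translation of the one given for Theorem~\ref{lminbasis}, with the block-size adjustments inherited from~\eqref{hZrank} in place of~\eqref{Zrank}.
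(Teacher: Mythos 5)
Your proposal is correct and matches the paper's intent exactly: the paper gives no separate proof of Theorem~\ref{rminbasis2}, stating instead that it follows by repeating the arguments of Lemma~\ref{mainl}, Lemma~\ref{sbasis} and Theorem~\ref{lminbasis} with the $\L_2$-analog of Lemma~\ref{mainl} (or via the duality~\eqref{l1tol2}), which is precisely the route you take. Your more careful handling of the degree comparison ($\deg z_i \le \deg x_i$ plus equality of total degrees) is a harmless refinement of the paper's ``evidently $\deg y_i = \deg z_i$'' step.
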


\section{Linearizations arising from g-linearizations}\label{lin}

Let $P(\lambda) = \sum_{i=0}^k \lambda^i A_i$ be an $m \times n$ matrix polynomial of grade $k$. In this section we show that although the pencils in $\L_1(P)$ and $\L_2(P)$ are generically g-linearizations of $P(\lambda),$ they can give rise to smaller pencils that are linearizations of $P(\lambda)$ from which the left and right minimal bases and indices of $P(\lambda)$ may be easily extracted. In the following we first describe the process of extracting these smaller pencils from g-linearizations of $P(\lambda)$ of full Z-rank in $\L_1(P)$ when $m\geq n.$

Let $L(\lambda) \in \L_1(P)$ with nonzero right ansatz vector $v \in \F^k$ be of full $Z$-rank. Let $M \in \F^{k \times k}$ be a nonsingular matrix such that $Mv = \alpha e_1$. From~(\ref{F-stage}), $(M \otimes I_m)L(\lambda)=\left[\begin{array}{c|c} P(\lambda) & W(\lambda) \\\hline  & Z \end{array}\right](F(\lambda))^{-1}$. Let
\begin{equation}\label{qr}
Z = Q\begin{bmatrix} \tilde R\\0\end{bmatrix}
 \end{equation}
 be a  $QR$ decomposition of  $Z$ where $Q \in \F^{(k-1)m \times (k-1)m}$ is an unitary matrix and $\tilde R \in \F^{(k-1)n \times (k-1)n}$ nonsingular and upper triangular. Then we have,

\begin{center}
\begin{equation}\label{extract_lin}
\begin{bmatrix}I_m&\\&Q^{*}\end{bmatrix}(M\otimes I_m)L(\lambda)=\left[\begin{array}{c|c} P(\lambda) & W(\lambda) \\\hline  & \tilde R \\ & 0 \end{array}\right]( F(\lambda))^{-1},
 \end{equation}
 \end{center}

%From ~\eqref{extract_lin}, it is clear that the last $(k-1)(m-n)$ rows of $\begin{bmatrix}I_m&\\&Q^{*}\end{bmatrix}(M\otimes I_m)L(\lambda)$ are zero.
Let $Q = \left[\begin{array}{cc} Q_1 & Q_2 \end{array}\right],$ be a partition of $Q$ such that $Z = Q_1\tilde{R}$ is the condensed $QR$ decomposition of $Z.$ Then recalling that $c = (k-1)(m-n),$ the submatrix formed by the last $c$ rows of the matrix $\begin{bmatrix}I_m&\\&Q^{*}\end{bmatrix}(M\otimes I_m) L(\lambda)$ is $\begin{bmatrix}0&Q_2^{*}\end{bmatrix}(M\otimes I_m) L(\lambda)$. Since the last $c$ rows of the matrix on the RHS of ~\eqref{extract_lin} are zero, we have  $\begin{bmatrix}0&Q_2^{*}\end{bmatrix}(M\otimes I_m) L(\lambda)=0$. Consider $D \in\mathbb{F}^{m+(k-1)n\times km}$ such that \begin{equation}\label{choiceofD}\begin{bmatrix} D \\ \begin{bmatrix}0&Q_2^{*}\end{bmatrix}(M\otimes I_m) \end{bmatrix}\end{equation} is nonsingular. Then $\begin{bmatrix} D \\ \begin{bmatrix}0&Q_2^{*}\end{bmatrix}(M\otimes I_m) \end{bmatrix}L(\lambda)=\begin{bmatrix}DL(\lambda)\\0\end{bmatrix}$. We set,

\begin{center}
\begin{equation}\label{tlin}
L_t(\lambda)=DL(\lambda).
\end{equation}
\end{center}

Given a g-linearization $L(\lambda) \in \L_1(P)$ of full $Z$-rank, the above process of extracting the pencil $L_t(\lambda)$ from $L(\lambda)$  clearly depends not only on $L(\lambda)$ but also on the choice of the nonsingular matrix $M \in \F^{k \times k}$ satisfying $Mv = \alpha e_1$ and the matrix $D\in \F^{m+(k-1)n \times km}$ such that the matrix in \eqref{choiceofD} is nonsingular. For ease of expression, We will refer to $L_t(\lambda)$ as \emph{ the trimmed version of $L(\lambda) \in \L_1(P),$ with respect to $M$ and $D$}, the sizes of the matrices $M$ and $D$ being evident from the context.

Clearly, for a given choice of nonsingular $M \in \F^{k \times k},$ there are infinitely many choices of $D$ in~\eqref{tlin}. One possible choice is to set $D$ to be the first $m + (k-1)n$ rows of $\begin{bmatrix}I_m&\\&Q^{*}\end{bmatrix}(M\otimes I_m).$ Then the corresponding linearization is
\begin{equation}\label{hatlt}
\hat L_t(\lambda) := \lambda\left[\begin{array}{c|c}\alpha A_k & X_{12}\\ \hline & -\tilde{R}\end{array}\right]+\left[\begin{array}{c|c} Y_{11}& \alpha A_0\\ \hline \tilde{R} & \end{array}\right].
\end{equation}
If $L(\lambda) = C_1^g(\lambda),$ then such a choice of $D$ results in $L_t(\lambda)$ being the first Frobenius companion linearization $C_1(\lambda).$  Every other linearization $L_t(\lambda)$ that is not of the form~\eqref{hatlt} is strictly equivalent to some $\hat L_t(\lambda)$ as

\begin{eqnarray} \nonumber
 L_t(\lambda) &=& DL(\lambda)\\ \nonumber
             &=& D(M^{-1}\otimes I_m)\left\{\lambda\left[\begin{array}{c|c}\alpha A_k & X_{12}\\ \hline & -Z\end{array}\right]+\left[\begin{array}{c|c} Y_{11}& \alpha A_0\\ \hline Z & \end{array}\right]\right\}\\ \nonumber
             &=& D(M^{-1}\otimes I_m)\begin{bmatrix}I_m&\\&Q\end{bmatrix}\left\{\lambda\left[\begin{array}{c|c}\alpha A_k & X_{12}\\ \hline & -\tilde R\\&0\end{array}\right]+\left[\begin{array}{c|c} Y_{11}& \alpha A_0\\ \hline \tilde R & \\0&\end{array}\right]\right\}\\ \nonumber
             &=& D(\underbrace{M^{-1}\otimes I_m)\begin{bmatrix}I_m&\\&Q_1\end{bmatrix}}_{= E_1}\left\{\lambda\left[\begin{array}{c|c}\alpha A_k & X_{12}\\ \hline & -\tilde R\end{array}\right]+\left[\begin{array}{c|c} Y_{11}& \alpha A_0\\ \hline \tilde R &\end{array}\right]\right\}\\
             &=& \underbrace{DE_1}_{=:\tilde D}\hat L_t(\lambda). \label{lt_hatlt}
\end{eqnarray}
where clearly, $\tilde{D} \in \F^{m+(k-1)n \times m+(k-1)n}$ is nonsingular as it satisfies
$$\left[\begin{array}{c|c} {\tilde D} & * \\ \hline & I_{(k-1)(m-n)}\end{array}\right] = \begin{bmatrix}D \\ \begin{bmatrix}0&Q_2^{*}\end{bmatrix}(M\otimes I_m) \end{bmatrix}(M^{-1}\otimes I_m)\begin{bmatrix}I_m&\\&Q\end{bmatrix}.$$

\begin{remark} The QR decomposition of the Z-matrix of the g-linearization $L(\lambda)$ that has been used to extract the pencils $L_t(\lambda),$ from $L(\lambda)$ can easily be replaced by any other decomposition like the rank revealing QR decomposition or the SVD of $Z$ without affecting the results and the analysis concerning these pencils. Therefore, the upper triangular structure of the block $\tilde{R}$ in~\eqref{hatlt} is not essential for the rest of the paper.
\end{remark}

\subsection{Trimming a g-linearization results in a strong linearization}

 We now show that trimming a g-linearization of full Z-rank results in a strong linearization of $P(\lambda)$ from which the left and right minimal bases and indices of $P(\lambda)$ can easily be recovered. In doing so, we establish the connection between the resulting pencils with some of the important classes of linearizations for rectangular matrix polynomials that have been recently introduced in the literature. We begin with the block minimal bases pencils introduced in~\cite{DopLPV16}.

\begin{definition} A block minimal bases pencil is a pencil of the form $$\left[\begin{array}{c|c} A(\lambda) & {\hat B}(\lambda)^T \\\hline B(\lambda) &  \end{array}\right], \left[\begin{array}{c} A(\lambda) \\ \hline B(\lambda) \end{array}\right] \mbox{ or } \left[\begin{array}{c|c} A(\lambda) & {\hat B}(\lambda)^T \end{array}\right]$$
where the rows of $B(\lambda)$ and $\hat B(\lambda)$ form minimal bases of the rational subspaces spanned by them.
\end{definition}

We will need a few important concepts and results related to block minimal bases pencils from~\cite{DopLPV16}. For convenience, following~\cite{DopLPV16},  we refer to a matrix polynomial whose rows form a minimal basis of the rational subspace spanned by them as a minimal basis. Such a minimal basis can be associated with a dual minimal basis defined as follows.

\begin{definition} A pair of minimal bases $B(\lambda) \in \F[\lambda]^{n_1 \times n}$ and $C(\lambda) \in \F[\lambda]^{n_2 \times n}$ are called dual minimal bases if $n_1 + n_2 = n$ and $B(\lambda)C(\lambda)^T = 0.$
\end{definition}

For example, the matrix polynomials
\begin{equation}\label{dualminb2}
  H_j(\lambda)=\left[\begin{array}{ccccc} -1 &\lambda  & & & \\  & -1 &\lambda  & & \\ & &\ddots & \ddots &  \\ & & &-1 & \lambda\end{array}\right]_{j \times (j+1)}.
\end{equation}
and $\Lambda_{j+1}(\lambda)^T$ given by~\eqref{dualminb1} are dual minimal bases. For most practical purposes, we will need the following special kind of block minimal bases from~\cite{DopLPV16}.

\begin{definition}
A block minimal bases pencil \begin{equation}\label{minb} L(\lambda) = \left[\begin{array}{c|c} A(\lambda) & \hat B (\lambda)^T \\ \hline B(\lambda) & 0 \end{array}\right] \end{equation} is called a strong block minimal bases pencil if it has the following additional properties:

\begin{itemize}

\item[(a)] The row degrees of $B(\lambda)$ and $\hat{B}(\lambda)$ are all equal to one.
\item[(b)] The row degrees of any minimal basis dual to $B(\lambda)$ are all equal.
\item[(c)] The row degrees of any minimal basis dual to $\hat{B}(\lambda)$ are all equal.

\end{itemize}
\end{definition}

We will adopt the convention that if the block $B(\lambda)$ ($\hat{B}(\lambda)$) is absent, then the corresponding dual minimal basis is an identity matrix of the same size as the number of columns (rows) of $A(\lambda).$ The following theorem about block minimal bases pencils which is a combination of ~\cite[Theorems 3.3 and 3.7]{DopLPV16} will be important for the results in this section and the next one.

\begin{theorem}\label{blockmin} Let $L(\lambda)$ be a minimal bases pencil given by~\eqref{minb} and $C(\lambda)$ and $\hat{C}(\lambda)$ be the dual minimal bases of $B(\lambda)$ and $\hat{B}(\lambda)$ respectively. Then $L(\lambda)$ is a linearization of the matrix polynomial
\begin{equation}\label{blockmin_eq} Q(\lambda) = {\hat C}(\lambda)A(\lambda)C(\lambda)^T. \end{equation}
Moreover, if $L(\lambda)$ is a strong block minimal bases pencil, then the following hold.

\begin{itemize}
\item[(a)] $L(\lambda)$ is a strong linearization of $Q(\lambda)$ considered as a polynomial of grade $1 + \mathrm{deg} \, C + \mathrm{deg} \, \hat{C}.$
\item[(b)] If $0 \leq \epsilon_1 \leq \epsilon_2 \leq \cdots \leq \epsilon_p$ are the right minimal indices of $Q(\lambda),$ then $$\epsilon_1 + \deg \, C \leq \epsilon_2 + \deg \, C \leq \cdots \leq \epsilon_p + \deg \, C,$$
    are the right minimal bases of $L(\lambda).$
\item[(c)] If $0 \leq \eta_1 \leq \eta_2 \leq \cdots \leq \eta_q$ are the left minimal indices of $Q(\lambda),$ then $$\eta_1 + \deg \, {\hat C} \leq \eta_2 + \deg \, {\hat C} \leq \cdots \leq \eta_q + \deg \, {\hat C}$$ are the left minimal indices of $L(\lambda).$
\end{itemize}
\end{theorem}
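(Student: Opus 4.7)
The plan is to prove the three parts in sequence: first the base linearization claim, then bootstrap it via the reversal operation to obtain strong linearization under the additional hypotheses, and finally obtain the minimal-index statements by exhibiting explicit null-space correspondences.

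For the initial linearization claim, I would rely on the standard fact that any pair of dual minimal bases $B(\lambda) \in \F[\lambda]^{n_1 \times n}$ and $C(\lambda) \in \F[\lambda]^{n_2 \times n}$ with $n_1 + n_2 = n$ can be completed to mutually inverse unimodular square matrices: there exist polynomials $\tilde{B}(\lambda)$, $\tilde{C}(\lambda)$ with
\[
\begin{bmatrix} B(\lambda) \\ \tilde{C}(\lambda) \end{bmatrix} \begin{bmatrix} \tilde{B}(\lambda)^T & C(\lambda)^T \end{bmatrix} = I.
\]
Applying this to both $(B,C)$ and $(\hat{B},\hat{C})$ produces square unimodular matrices, and block multiplication against $L(\lambda)$ combined with further elementary block-row and block-column operations that clear the off-diagonal $A(\lambda)$-terms yields a block diagonal form
\[
U(\lambda) L(\lambda) V(\lambda) = \begin{bmatrix} \hat{C}(\lambda) A(\lambda) C(\lambda)^T & 0 \\ 0 & I_s \end{bmatrix} = \begin{bmatrix} Q(\lambda) & 0 \\ 0 & I_s \end{bmatrix},
\]
which is precisely the defining identity for $L(\lambda)$ to be a linearization of $Q(\lambda)$.

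For part (a), I would observe that the strong block minimal bases hypothesis (row degrees of $B$ and $\hat{B}$ all equal to $1$, row degrees of the duals $C$ and $\hat{C}$ constant) is preserved under the reversal $\mathrm{rev}_1$: the pencil $\mathrm{rev}_1 L(\lambda)$ is again a strong block minimal bases pencil, whose blocks are the reversals of $A$, $B$, $\hat{B}$ and whose dual minimal bases are the reversals of $C$, $\hat{C}$. A direct degree count using the constant row degrees of $C$ and $\hat{C}$ then gives
\[
\mathrm{rev}_{\mathrm{deg}\,\hat{C}}\,\hat{C}(\lambda) \cdot \mathrm{rev}_1 A(\lambda) \cdot \bigl(\mathrm{rev}_{\mathrm{deg}\,C}\,C(\lambda)\bigr)^T = \mathrm{rev}_k Q(\lambda),
\]
with $k = 1 + \mathrm{deg}\,C + \mathrm{deg}\,\hat{C}$. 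Applying the already-established linearization claim to $\mathrm{rev}_1 L(\lambda)$ then finishes the proof of (a).

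For parts (b) and (c), I would construct explicit maps between null spaces. Given $q(\lambda) \in N_r(Q)$, define $x(\lambda)$ to have upper block $C(\lambda)^T q(\lambda)$ and lower block $0$; using $B(\lambda) C(\lambda)^T = 0$ together with the reduction above, one checks $L(\lambda) x(\lambda) = 0$. Because $C(\lambda)$ has constant row degree, the degree of $C(\lambda)^T q(\lambda)$ equals $\mathrm{deg}\,q + \mathrm{deg}\,C$ exactly, producing the claimed index shift by $\mathrm{deg}\,C$; the symmetric construction using $\hat{C}(\lambda)$ handles the left minimal indices. The most delicate step, and the main obstacle, will be showing that these maps send a \emph{minimal} basis of $N_r(Q)$ to a \emph{minimal} basis of $N_r(L)$ rather than merely a basis. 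The constant-row-degree hypothesis on the duals is precisely what makes the predicted degree shift tight; verifying minimality should follow by a contradiction argument that transports a putative lower-order basis of $N_r(L)$ back through the inverse map to contradict minimality of the original basis of $N_r(Q)$.
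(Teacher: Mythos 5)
A preliminary remark: the paper does not prove this theorem at all — it is imported verbatim as a combination of Theorems 3.3 and 3.7 of \cite{DopLPV16} — so your proposal can only be measured against that source. For the plain linearization claim and for part (a), your plan is essentially the argument used there: dual minimal bases embed as the first block rows of a unimodular matrix and the last block columns of its inverse, block row and column operations then reduce $L(\lambda)$ to $\mathrm{diag}(Q(\lambda), I_s)$, and the strong block minimal bases hypotheses are stable under reversal (the reversal of a row-degree-one minimal basis is dual to the $\deg$-reversal of its dual), so the same reduction applies to $\mathrm{rev}_1L(\lambda)$. Those steps are sound, granting the standard unimodular-completion theorem and the row-reducedness of minimal bases, which is also what justifies your claim that $\deg\bigl(C(\lambda)^Tq(\lambda)\bigr)=\deg q+\deg C$ exactly.

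The genuine gap is in (b)--(c). The vector you propose, with upper block $C(\lambda)^Tq(\lambda)$ and lower block $0$, is in general \emph{not} in $N_r(L)$: the lower block of $L(\lambda)x(\lambda)$ is $B(\lambda)C(\lambda)^Tq(\lambda)$, which indeed vanishes since $B(\lambda)C(\lambda)^T=0$, but the upper block is $A(\lambda)C(\lambda)^Tq(\lambda)$, and $q(\lambda)\in N_r(Q)$ only gives $\hat{C}(\lambda)A(\lambda)C(\lambda)^Tq(\lambda)=0$, i.e.\ $A(\lambda)C(\lambda)^Tq(\lambda)\in N_r(\hat{C})$. Since $\hat{B}$ and $\hat{C}$ are dual minimal bases, $N_r(\hat{C})$ is precisely the $\F(\lambda)$-column space of $\hat{B}(\lambda)^T$, not $\{0\}$. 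The correct map must take the lower block to be the (unique) solution $x_2(\lambda)$ of $\hat{B}(\lambda)^Tx_2(\lambda)=-A(\lambda)C(\lambda)^Tq(\lambda)$, and you then have to show that $x_2(\lambda)$ is polynomial and that its degree does not exceed $\deg q+\deg C$, otherwise the asserted exact shift of the right minimal indices is lost. Your zero-lower-block shortcut is valid only in the degenerate case where the $\hat{B}$-block is absent — which is exactly the special pencil $K(\lambda)$ that this paper uses in Theorem~\ref{recovlt1}, but not the theorem as stated with both off-diagonal blocks present in \eqref{minb}. Finally, the minimality argument you sketch needs the correspondence in both directions (every element of $N_r(L)$ must be shown to arise from some $q(\lambda)\in N_r(Q)$ with the same degree bookkeeping); with only the forward map, the contradiction transporting a putative lower-order basis of $N_r(L)$ back to $N_r(Q)$ cannot be closed.
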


Given a strong block minimal bases pencil, the above result shows the construction of a polynomial from the pencil such that the pencil is a strong linearization of the polynomial and lays out the recovery rules for extracting left and right minimal indices of the polynomial from those of the pencil. However in practice, we are generally more interested in the reverse process, i.e., given a matrix polynomial $Q(\lambda)$ of grade $k,$ we are interested in constructing a strong linearization from which the left and right minimal indices of the polynomial can be easily extracted. It was shown in~\cite{DopLPV16}, that this easily achieved by the so called Block Kronecker pencils that are a special class of strong  block minimal bases pencils for which $$\hat B(\lambda)=H_\epsilon(\lambda) \otimes I_m  \mbox{ and }B(\lambda) =H_\eta(\lambda) \otimes I_n$$ with $\epsilon + \eta + 1 = k,$ and $H_j(\lambda)$ given by~\eqref{dualminb2}.  The conditions on the block $A(\lambda)$ under which the Block Kronecker pencils become strong linearizations of a given polynomial $P(\lambda)$ are given in~\cite[Theorem 5.4]{DopLPV16}.  Now we have the main result of this section.

\begin{theorem}\label{recovlt1} Let $P(\lambda) = \sum_{i=0}^k \lambda^iA_i$ be an $m \times n$ matrix polynomial of grade $k$ with $m\geq n$ and $r = \mathrm{nrank} P(\lambda).$ Let $L(\lambda) \in \L_1(P)$ with right ansatz vector $v \in \F^k \setminus \{0\}$ be of full Z-rank. Let $L_t(\lambda)$ be the pencil obtained by trimming $L(\lambda) \in \L_1(P),$ with respect to $M$ and $D$.
Then $L_t(\lambda)$ is a strong linearization of $P(\lambda)$ such that the following hold.

\begin{itemize}

\item[(a)] Every minimal basis of $N_r(L_t)$ is of the form  $\{\Lambda_k(\lambda) \otimes x_1(\lambda), \ldots, \Lambda_k(\lambda) \otimes x_{n-r}(\lambda)\}$ where \\ $\{ x_1(\lambda), \ldots, x_{n-r}(\lambda)\}$ is a minimal basis of $N_r(P).$

\item[(b)] The right minimal indices of $L_t(\lambda)$ are those of $P(\lambda)$ shifted by $k-1.$

\item[(c)] Every minimal basis of $N_l(P)$ is of the form $\{(v^{T}\otimes I_m)D^Ty_1(\lambda), \ldots, (v^{T}\otimes I_m)D^Ty_{m-r}(\lambda)\}$ where $\{y_1(\lambda), \ldots, y_{m-r}(\lambda)\}$ is a minimal basis of $N_l(L_t)$.

\item[(d)] The left minimal indices of $P(\lambda)$ are equal to those of $L_t(\lambda).$

\end{itemize}
\end{theorem}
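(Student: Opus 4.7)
The plan is to reduce to the canonical trimmed pencil $\hat L_t(\lambda)$ defined in~\eqref{hatlt} and recognise it, up to a constant nonsingular left factor, as a strong block minimal bases pencil so that Theorem~\ref{blockmin} can be invoked. The reduction is supplied by~\eqref{lt_hatlt}, which exhibits a nonsingular constant matrix $\tilde D$ with $L_t(\lambda) = \tilde D\,\hat L_t(\lambda)$; under this strict equivalence $L_t$ and $\hat L_t$ share linearization status, minimal indices, and have left and right null spaces related by the constant isomorphism $y \mapsto \tilde D^{-T} y$. Hence it suffices to prove everything for $\hat L_t$ and then transport the recovery formulas through $\tilde D$.

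The heart of the argument is a block-structure computation for $\hat L_t(\lambda)$. Its top $m$ rows form $A(\lambda) := \lambda[\alpha A_k \;\; X_{12}] + [Y_{11} \;\; \alpha A_0]$, and slicing off the top $m$ rows of the ansatz identity $(M\otimes I_m)L(\lambda)(\Lambda_k(\lambda)\otimes I_n) = \alpha e_1 \otimes P(\lambda)$ gives $A(\lambda)(\Lambda_k(\lambda)\otimes I_n) = \alpha P(\lambda)$. For the bottom $(k-1)n$ rows $\lambda[\,0\;\; -\tilde R\,] + [\,\tilde R\;\;0\,]$, I would partition $\tilde R = [\tilde R_1,\ldots,\tilde R_{k-1}]$ into $n$-column blocks and expand column-block-wise to show that this bottom block factors as $-\tilde R\,(H_{k-1}(\lambda)\otimes I_n)$, with $H_{k-1}$ as in~\eqref{dualminb2}. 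Left-multiplying $\hat L_t$ by the constant nonsingular matrix $\mathrm{diag}(I_m,-\tilde R^{-1})$ then produces $\left[\begin{array}{c} A(\lambda) \\ H_{k-1}(\lambda)\otimes I_n \end{array}\right]$, a strong block minimal bases pencil because $H_{k-1}(\lambda)\otimes I_n$ has uniform row degrees $1$ and its dual $\Lambda_k(\lambda)^T \otimes I_n$ has uniform row degrees $k-1$, while the $\hat B$ block is absent (so $\hat C = I_m$ with $\deg \hat C = 0$). Theorem~\ref{blockmin} then yields that $\hat L_t$, and hence $L_t$, is a strong linearization of $\alpha P(\lambda)$ (equivalently of $P(\lambda)$), proves (b) via the right-index shift by $\deg C = k-1$, and proves (d) via the left-index shift by $\deg \hat C = 0$.

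For part (a), since the block matrix in~\eqref{choiceofD} is nonsingular, $D$ has full row rank, so $N_r(L) = N_r(L_t)$; the conclusion is then immediate from Theorem~\ref{rminbasis} applied to the g-linearization $L(\lambda)\in\L_1(P)$. For part (c), I would work at the level of $\hat L_t$ first: the projection $\pi\colon N_l(\hat L_t)\to N_l(P)$ sending $[\,y_1^T\;\; y_2^T\,]^T$ to $y_1$ is well-defined, because right-multiplying $y^T\hat L_t = 0$ by $\Lambda_k(\lambda)\otimes I_n$ and using $A(\lambda)(\Lambda_k\otimes I_n) = \alpha P(\lambda)$ together with $(H_{k-1}\otimes I_n)(\Lambda_k\otimes I_n) = 0$ gives $\alpha\, y_1^T P(\lambda) = 0$. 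Injectivity follows because $y_1 = 0$ forces $y_2^T\tilde R(H_{k-1}(\lambda)\otimes I_n) = 0$, and both $\tilde R$ and $H_{k-1}(\lambda)\otimes I_n$ have trivial left null space; a dimension count gives $\dim N_l(\hat L_t) = m-r = \dim N_l(P)$, so $\pi$ is an isomorphism. Since $\pi$ is degree-non-increasing and part (d) equates the sums of degrees of minimal bases on both sides, $\pi$ carries minimal bases to minimal bases. To translate back to $L_t$, I would use the explicit form $\tilde D = D(M^{-1}\otimes I_m)\,\mathrm{diag}(I_m,Q_1)$ from~\eqref{lt_hatlt}, together with the observation that $Mv = \alpha e_1$ makes the first column of $M^{-1}$ equal $v/\alpha$; a short computation then identifies the first-$m$-component projection of $\tilde D^T y$ with $\alpha^{-1}(v^T\otimes I_m)D^T y$, and the nonzero scalar $\alpha^{-1}$ is immaterial for the minimal-basis property.

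The main obstacle is the block-structure identification in the second paragraph: recognising the factorisation $-\tilde R\,(H_{k-1}(\lambda)\otimes I_n)$ of the bottom slab of $\hat L_t(\lambda)$ is the step that unlocks the entire strong-block-minimal-bases machinery. The secondary delicacy is the bookkeeping required to convert the clean $\hat L_t$-level projection $\pi$ into the $L_t$-level formula $(v^T\otimes I_m)D^T$ asserted in part (c); this hinges on the precise expression for $\tilde D$ and on reading off the first row of $M^{-T}$ from $Mv = \alpha e_1$.
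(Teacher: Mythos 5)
Your overall route is the paper's: you factor $L_t(\lambda)$ through $\tilde D$ and a constant block-diagonal factor into a Block Kronecker pencil with the $\hat B(\lambda)$ block absent (this is exactly~\eqref{stronglin}), invoke Theorem~\ref{blockmin} for the strong-linearization claim and parts (b), (d), and get (a) from $N_r(L)=N_r(L_t)$ and Theorem~\ref{rminbasis}. Your treatment of (c) via the projection $\pi$ on $N_l(\hat L_t)$ is a genuinely different (and cleaner) device than the paper's map $\mathcal L(y)=\mathcal L_v(D^Ty)$, and your translation identity $[\,I_m\;\;0\,]\tilde D^Ty=\alpha^{-1}(v^T\otimes I_m)D^Ty$, read off from $M^{-1}e_1=v/\alpha$ and the form of $\tilde D$ in~\eqref{lt_hatlt}, is correct.

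Two gaps remain. First, in (a) the inference ``$D$ has full row rank, so $N_r(L)=N_r(L_t)$'' is not valid: $D\in\F^{(m+(k-1)n)\times km}$ has a nontrivial right kernel, so $DL(\lambda)x(\lambda)=0$ does not by itself force $L(\lambda)x(\lambda)=0$. You must combine the identity $[\,0\;\;Q_2^*\,](M\otimes I_m)L(\lambda)=0$ from the construction of $L_t(\lambda)$ with the nonsingularity of the stacked matrix~\eqref{choiceofD}; then $L(\lambda)x(\lambda)=0$ follows and (a) is fine. Second, and more substantively, in (c) you prove only that $\pi$ (hence $y\mapsto(v^T\otimes I_m)D^Ty$) carries minimal bases of $N_l(L_t)$ to minimal bases of $N_l(P)$, whereas the theorem asserts that \emph{every} minimal basis of $N_l(P)$ arises in this way. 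For that you need a degree-preserving polynomial lift: the unique $\pi$-preimage of a vector polynomial $z(\lambda)\in N_l(P)$ of degree $\delta$ must be shown to be a vector polynomial of degree $\delta$; ``degree-non-increasing'' in the forward direction does not give this, and passing between two minimal bases of $N_l(P)$ by a unimodular factor need not preserve minimality upstairs. This lifting is precisely the technical core of the paper's proof of (c), built from Lemma~\ref{mainl} and the bordered nonsingular matrix, after which minimality follows from (d) as in Theorem~\ref{lminbasis}. In your framework the patch is short: since $z(\lambda)^TA(\lambda)$ annihilates $\Lambda_k(\lambda)\otimes I_n$, it lies in the row space of the minimal basis $H_{k-1}(\lambda)\otimes I_n$, so by the predictable-degree property $z(\lambda)^TA(\lambda)=w(\lambda)^T(H_{k-1}(\lambda)\otimes I_n)$ with $w(\lambda)$ polynomial and $\deg w\le\deg z$, and then $y_2(\lambda)=\pm\tilde R^{-T}w(\lambda)$ completes a polynomial preimage of degree $\deg z$ — but as written your proposal does not supply this step.
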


\begin{proof}
From~\eqref{hatlt} and \eqref{lt_hatlt},
\begin{equation}\label{stronglin} L_t(\lambda) := {\tilde D} \left[\begin{array}{c|c} I_m & \\ \hline  & \tilde{R} \end{array}\right] \underbrace{\lambda\left[\begin{array}{c|c}\alpha A_k & X_{12}\\ \hline & -I_{(k-1)n} \end{array}\right]+\left[\begin{array}{c|c} Y_{11}& \alpha A_0\\ \hline I_{(k-1)n} & \end{array}\right]}_{=: K(\lambda)}
\end{equation}

Clearly $K(\lambda)$ is in the Block Kronecker form $\left[\begin{array}{c} A(\lambda) \\ \hline B(\lambda) \end{array}\right]$ with $$A(\lambda) := \lambda\left[\begin{array}{cc} \alpha A_k & X_{12} \end{array}\right] + \left[\begin{array}{cc} Y_{11} & \alpha A_0 \end{array}\right] \mbox{ and } B(\lambda) = H_{k-1}(\lambda) \otimes I_n.$$ Now $A(\lambda)C(\lambda)^T = P(\lambda),$ where $C(\lambda) = \frac{1}{\alpha}(\Lambda_k(\lambda) \otimes I_n)^T$ is a dual of $B(\lambda).$ As the block ${\hat B}(\lambda)$ is absent in $K(\lambda)$ (and consequently, ${\hat C}(\lambda) = I_m,$) by Theorem~\ref{blockmin}, $K(\lambda)$ is   a strong linearization of $P(\lambda)$ such that $P(\lambda)$ and $K(\lambda)$ have the same left minimal indices and the right minimal indices of $K(\lambda)$ are those of $P(\lambda)$ shifted by $k-1.$ The relation~\eqref{stronglin}, shows that the same is true of each pencil $L_t(\lambda)$ obtained by trimming a strong g-linearization in $\mathbb{L}_1(P)$ and this proves (b) and (d).

The process of obtaining $L_t(\lambda)$ from $L(\lambda)$  implies that $N_r(L) = N_r(L_t).$ Therefore the proof of (a) follows from Theorem~\ref{rminbasis}.
To prove (c) we consider the map
 \begin{eqnarray*}
 \mathcal{L}:N_l(L_t) &\rightarrow& N_l(P)\\
 y(\lambda) &\mapsto& \mathcal{L}_v(D^Ty(\lambda))
 \end{eqnarray*}
 If $y(\lambda)\in N_l(L_t)$ then $D^Ty(\lambda)\in N_l(L) \subset N_l(P)$, and therefore $\mathcal{L}$ is well defined. Also from the definition of $\mathcal{L}$ it is clear that it is a linear map from $N_l(L_t)$ to $N_l(P).$ We first show that $\mathcal{L}$ is bijective. Let $Z$ be the Z-matrix of $(M \otimes I_n)L(\lambda)$ and $Q_2$ be the last $c = (k-1)(m-n)$ columns of the unitary matrix $Q$ of a $QR$ decomposition of $Z.$ Now $y(\lambda) \in N(\mathcal{L})$ if and only if $D^Ty(\lambda) \in N(\mathcal{L}_v).$
 As noted in Remark~\ref{conslminbasis}, there exists a basis of $N(\mathcal{L}_v)$ of the form
 $\left\{(M^T\otimes I_m)\begin{bmatrix}0\\\ q_1 \end{bmatrix},\dots, (M^T\otimes I_m)\begin{bmatrix}0\\q_c\end{bmatrix} \right\}$ where $c=(k-1)(m-n)$ and $\{\bar q_1,\dots, \bar q_c\}$ are the columns of $Q_2.$ Therefore there exists a nonzero $a \in \F^c$ such that $$D^Ty(\lambda) = (M^T \otimes I_m) \begin{bmatrix}I_m&\\& \overline{Q_2}\end{bmatrix}\begin{bmatrix}0\\a\end{bmatrix} = (M^T \otimes I_m) \begin{bmatrix} 0 \\ \overline{Q_2}a \end{bmatrix}.$$
This implies that
$\begin{bmatrix} y(\lambda) \\ -a \end{bmatrix}^T\begin{bmatrix}D \\ \left[\begin{array}{cc} 0 & Q_2^* \end{array}\right] (M \otimes I_m) \end{bmatrix}= 0,$ which gives $y(\lambda)=0$ as $\begin{bmatrix} D \\ \left[\begin{array}{cc} 0 & Q_2^* \end{array}\right] (M \otimes I_m)\end{bmatrix}$ is nonsingular. Therefore $\mathcal{L}$ is a one to one linear map. Since $N_l(L_t)$ and $N_l(P)$ are of the same dimension, it follows that $\mathcal{L}$ is a bijective linear map.

Now we will show for any vector polynomial $p(\lambda)\in N_l(P)$ of degree $\delta$ we can find a polynomial vector $z(\lambda)\in\ N_l(L_t)$ of degree $\delta$ such that  $\mathcal{L}(z(\lambda))=p(\lambda)$. By Lemma~\ref{mainl} $\exists$ $\hat z(\lambda)\in N_l(L)$ such that $\mathcal{L}_v(\hat z(\lambda))=p(\lambda)$ and $\mathrm{deg} \, \hat z=\mathrm{deg} \, p$. Let $z(\lambda)$ be the first with $m + (k-1)n$ entries of a vector polynomial ${\hat y}(\lambda)$ which satisfies $\begin{bmatrix}D \\ \left[\begin{array}{cc} 0 & Q_2^* \end{array}\right](M \otimes I_m)\end{bmatrix}^T{\hat y}(\lambda) = \hat z(\lambda).$ Then $z(\lambda) \in N_l(L_t)$ as,
$$z(\lambda)^TL_t(\lambda) = {\hat y}(\lambda)^T\left[\begin{array}{c} L_t(\lambda) \\ 0_c \end{array}\right] = {\hat y}(\lambda)^T\begin{bmatrix}D \\ \left[\begin{array}{cc} 0 & Q_2^*\end{array}\right](M \otimes I_m)\end{bmatrix}L(\lambda) = \hat z(\lambda)^TL(\lambda) = 0.$$

Now as $(v^T \otimes I_m)(M^T \otimes I_m)\left[\begin{array}{c} 0_{m+(k-1)n} \\ \bar{Q_2} \end{array} \right] = (\alpha e_1^T \otimes I_m)\left[\begin{array}{c} 0_{m+(k-1)n} \\ \bar{Q_2} \end{array} \right] = 0,$ we have $$\mathcal{L} (z(\lambda)) = (v^T \otimes I_m)D^Tz(\lambda) = (v^T \otimes I_m)\begin{bmatrix}D \\ \left[\begin{array}{cc} 0 & Q_2^*\end{array}\right](M \otimes I_m)\end{bmatrix}^T{\hat y}(\lambda) = \mathcal{L}_v (\hat z (\lambda)) = p(\lambda).$$
Also it is clear that $z(\lambda)$ and $p(\lambda)$  have the same degree as $$\deg \, z  \geq \deg \, p = \deg \, \hat z = \deg \, \hat y \geq \deg \, z.$$  Now the proof of part (c) follows by arguing as in the proof of Theorem~\ref{lminbasis}. \end{proof}

\begin{remark} It can be proved that the pencils $L_t(\lambda)$ are strong linearizations of $P(\lambda)$ without establishing their connection with Block Kronecker pencils. However, we prefer to give this connection to highlight their position in the current literature of strong linearizations of rectangular matrix polynomials. In fact it is also clear from~\eqref{stronglin} that among the linearizations formed by trimming g-linearizations in $\L_1(P),$ only the pencils $\hat L_t(\lambda)$ of the form~\eqref{hatlt} belong to the Block Kronecker ansatz spaces $\mathbb{G}_1(P)$ introduced in~\cite{FasS18}.
\end{remark}

\begin{remark}
The recovery rules for the left and right minimal bases of $P(\lambda)$ from those of $L_t(\lambda)$ may also be derived from~\cite[Theorem 7.7]{DopLPV16} which gives the rules for extracting the same for general Block Kronecker linearizations. However, as the pencils $K(\lambda)$ in~\eqref{stronglin} to which the pencils $L_t(\lambda)$ are strictly equivalent are special types of Block Kronecker pencils, we prefer to prove these parts directly by using the notions and techniques previously introduced in the paper.
\end{remark}

In a similar way, if $m\leq n,$ pencils in $\L_2(P)$ of full Z-rank can provide strong linearizations of $P(\lambda).$ In particular if $L(\lambda) \in \L_2(P)$ with nonzero left ansatz vector $w \in \F^k$ has full Z-rank,  then for any nonsingular matrix $\hat{M} \in \F^{k \times k}$ such that $\hat Mw = \alpha e_1$ for some $\alpha \neq 0,$ $$L(\lambda)(\hat M^T \otimes I_n) = \lambda \left[\begin{array}{c|c} \alpha A_k & \\\hline \hat{X}_{12}  & -\hat Z \end{array}\right] + \left[\begin{array}{c|c} \hat{Y}_{11} & \hat Z \\\hline \alpha A_0 &  \end{array}\right],$$
where $\hat Z \in \F^{(k-1)m \times (k-1)n}$ with $\rank \, \hat Z = (k-1)m.$ If $\hat Z^* = Q \left[\begin{array}{c} \hat R \\ 0 \end{array} \right],$ be a QR decomposition of $\hat Z^*$ and $Q_2$ is  the matrix formed by the last $c = (k-1)(n-m)$ columns of $Q,$ then it is easy to see that $$L(\lambda)(\hat M^T \otimes I_n)\left[\begin{array}{c} 0 \\ Q_2 \end{array}\right] = 0.$$ For any choice of $\hat{D} \in \F^{kn \times (n + (k-1)m)},$ such that the $kn \times kn$ matrix
\begin{equation}\label{choiceofhatd}
\left[\begin{array}{cc} \hat D & (\hat M^T \otimes I_n) \left[\begin{array}{cc} 0 & Q_2^T \end{array}\right]^T \end{array}\right]
\end{equation}
is nonsingular, we get the pencils $L(\lambda) \hat D.$ We refer to them as \emph{the pencils formed by trimming $L(\lambda) \in \L_2(P),$ with respect to $\hat M$ and $\hat D.$} For instance, the second companion linearization $C_2(\lambda)$ arises from $C_2^g(\lambda) \in \L_2(P),$ with respect to $\hat M = I_k$ and $\hat D = \left[\begin{array}{cc} I_n & \\ & I_{k-1} \otimes I_{n,m} \end{array}\right].$

By arguing as in the proof of Theorem~\ref{recovlt1}, these pencils can be shown to be strictly equivalent to Block Kronecker linearizations of $P(\lambda)$ of the form $\left[\begin{array}{cc} A(\lambda) & H_{k-1}(\lambda)^T \otimes I_m \end{array}\right]$ from which the left and right minimal bases and indices of $P(\lambda)$ may be easily extracted. In fact we have the following theorem.

\begin{theorem}\label{recovlt2} Let $P(\lambda) = \sum_{i=0}^k \lambda^iA_i$ be an $m \times n$ matrix polynomial of grade $k$ with $m\leq n$ and $r = \mathrm{nrank} P(\lambda).$ Let $L(\lambda) \in \L_2(P)$ with left ansatz vector $w \in \F^k \setminus \{0\}$ be of full Z-rank. Let $L_t(\lambda)$ be the pencil formed by trimming $L(\lambda) \in \L_2(P),$ with respect to $\hat M$ and $\hat D$. Then $L_t(\lambda)$ is a strong linearization of $P(\lambda)$ and the following hold.

\begin{itemize}

\item[(a)] Every minimal basis of $N_l(L_t)$ is of the form  $\{\Lambda_k(\lambda) \otimes y_1(\lambda), \ldots, \Lambda_k(\lambda) \otimes y_{m-r}(\lambda)\}$ where \\ $\{ y_1(\lambda), \ldots, y_{m-r}(\lambda)\}$ is a minimal basis of $N_l(P).$

\item[(b)] The left minimal indices of $L_t(\lambda)$ are those of $P(\lambda)$ shifted by $k-1.$

\item[(c)] Every minimal basis of $N_r(P)$ is of the form $\{(w^{T}\otimes I_n)\hat Dx_1(\lambda), \ldots, (w^{T}\otimes I_n)\hat Dx_{n-r}(\lambda)\}$ where $\{x_1(\lambda), \ldots, x_{n-r}(\lambda)\}$ is a minimal basis of $N_r(L_t)$.

\item[(d)] The right minimal indices of $P(\lambda)$ are equal to those of $L_t(\lambda).$

\end{itemize}
\end{theorem}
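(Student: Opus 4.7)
The natural strategy is to reduce Theorem~\ref{recovlt2} to Theorem~\ref{recovlt1} via the transposition correspondence~\eqref{l1tol2}. First I would observe that $L(\lambda)\in\L_2(P)$ with nonzero left ansatz vector $w$ has full $Z$-rank if and only if $L(\lambda)^T\in\L_1(P^T)$ with right ansatz vector $w$ has full $Z$-rank: indeed, taking the transpose of the reduction~\eqref{hZrank} gives precisely the reduction~\eqref{Zrank} for $L(\lambda)^T$ with $Z$-matrix $\hat Z^T$, whose rank equals that of $\hat Z$. Since $m\leq n$, the transposed polynomial $P^T$ is $n\times m$ with $n\geq m$, so Theorem~\ref{recovlt1} and all the supporting results in the paper apply to $P^T$ and $L(\lambda)^T$.

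Next I would match the trimming data on the two sides of the correspondence. If $L_t(\lambda)=L(\lambda)\hat D$ is obtained by trimming $L(\lambda)\in\L_2(P)$ with respect to $\hat M$ and $\hat D$, then
\[
L_t(\lambda)^T \;=\; \hat D^T L(\lambda)^T
\]
is a pencil obtained by trimming $L(\lambda)^T\in\L_1(P^T)$ with respect to $M:=\hat M$ and $D:=\hat D^T$. The $QR$ decomposition $\hat Z^*=Q\!\left[\begin{smallmatrix}\hat R\\ 0\end{smallmatrix}\right]$ used in~\eqref{choiceofhatd} corresponds, after elementwise conjugation, to a $QR$ decomposition of the $Z$-matrix $\hat Z^T$ of $L(\lambda)^T$, and the nonsingularity condition~\eqref{choiceofhatd} on $\hat D$ is equivalent (by taking transposes and using that a matrix is nonsingular iff its complex conjugate is) to the nonsingularity condition~\eqref{choiceofD} on $D=\hat D^T$. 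Hence Theorem~\ref{recovlt1} applied to $P^T$ and $L(\lambda)^T$ ensures that $L_t(\lambda)^T$ is a strong linearization of $P^T$, and therefore $L_t(\lambda)$ is a strong linearization of $P(\lambda)$.

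It remains to translate parts (a)--(d) of Theorem~\ref{recovlt1} into parts (a)--(d) of Theorem~\ref{recovlt2}. I would use the identifications $N_l(L_t)=N_r(L_t^T)$, $N_r(L_t)=N_l(L_t^T)$, $N_l(P)=N_r(P^T)$, $N_r(P)=N_l(P^T)$, together with the elementary fact that minimal bases and indices of left and right null spaces interchange under transposition. Parts (a) and (b) of Theorem~\ref{recovlt2} then follow at once from parts (a) and (b) of Theorem~\ref{recovlt1} applied to $P^T$ and $L(\lambda)^T$. For parts (c) and (d), the recovery formula $y(\lambda)\mapsto (v^T\otimes I_m)D^T y(\lambda)$ of Theorem~\ref{recovlt1} becomes $x(\lambda)\mapsto (w^T\otimes I_n)\hat D\,x(\lambda)$ under the substitution $v=w$, $D=\hat D^T$, exactly as required.

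The main technical annoyance (rather than a genuine obstacle) is the careful bookkeeping of transpositions and conjugations needed to make the above correspondence precise, which is routine but easy to get wrong. A more self-contained alternative, paralleling the proof of Theorem~\ref{recovlt1}, would be to show directly that $L_t(\lambda)$ is strictly equivalent to a Block Kronecker pencil of the form $[\,A(\lambda)\,|\,H_{k-1}(\lambda)^T\otimes I_m\,]$ with dual minimal basis $\hat C(\lambda)=\frac{1}{\alpha}(\Lambda_k(\lambda)\otimes I_m)^T$ of $\hat B(\lambda)=H_{k-1}(\lambda)\otimes I_m$, and then invoke Theorem~\ref{blockmin} for both the strong linearization property and the recovery rules; this is the approach suggested by the remark preceding the statement.
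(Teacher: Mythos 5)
Your proof is correct, but its main route differs from the one the paper indicates for this theorem. The paper gives no separate proof of Theorem~\ref{recovlt2}: the sentence preceding it says the result follows by arguing as in the proof of Theorem~\ref{recovlt1}, i.e., by showing directly that $L_t(\lambda)$ is strictly equivalent to a Block Kronecker pencil of the form $\left[\begin{array}{cc} A(\lambda) & H_{k-1}(\lambda)^T \otimes I_m \end{array}\right]$ and invoking Theorem~\ref{blockmin} together with the $\L_2$-analogues of the recovery lemmas --- exactly the alternative you sketch in your final paragraph. Your primary argument instead reduces everything to Theorem~\ref{recovlt1} via the correspondence~\eqref{l1tol2}, and the bookkeeping you supply is precisely what makes that reduction legitimate: transposing~\eqref{hZrank} yields~\eqref{Zrank} for $L(\lambda)^T\in\L_1(P^T)$ with $Z$-matrix $\hat Z^T$, so full $Z$-rank is preserved; entrywise conjugation of the QR factorization of $\hat Z^*$ gives a QR factorization of $\hat Z^T$ whose trailing orthonormal block is $\overline{Q_2}$, so that $(\overline{Q_2})^* = Q_2^T$ and the bordered matrix in~\eqref{choiceofD} for $D=\hat D^T$, $M=\hat M$ is the transpose of the matrix in~\eqref{choiceofhatd}; and since the set of admissible trimming matrices does not depend on which factorization of the $Z$-matrix is used (as the paper remarks), $L_t(\lambda)^T=\hat D^T L(\lambda)^T$ is indeed a trimming of $L(\lambda)^T$ with respect to $\hat M$ and $\hat D^T$. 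The transfer of parts (a)--(d) through $N_l(L_t)=N_r(L_t^T)$, $N_r(P)=N_l(P^T)$, including the recovery map $(w^T\otimes I_n)\hat D$, is routine and you carry it out correctly. What your reduction buys is economy --- no need to redo Lemma~\ref{mainl} and its companions on the other side --- at the cost of the transpose/conjugate bookkeeping; the direct route the paper intends produces the explicit Block Kronecker form and makes visible the connection to the ansatz space $\mathbb{G}_k(P)$ noted in the remark following the theorem. Both are valid, and the paper itself endorses either strategy for $\L_2$ statements in general.
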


\begin{remark} For $L(\lambda) \in \mathbb{L}_2(P)$ of full Z-rank with left ansatz vector $w \in \F^k \setminus \{0\}$ and a given choice of ${\hat M} \in \F^{k \times k}$ such that ${\hat M}w = \alpha e_1,$ if the matrix $\hat D$ in~\eqref{choiceofhatd} is chosen to be the first $n + (k-1)m$ columns of $({\hat M}^T \otimes I_n) \left[\begin{array}{cc} I_n & \\ & Q \end{array}\right],$ then in fact, the resulting pencil belongs to the Block Kronecker ansatz space $\mathbb{G}_k(P)$ introduced in~\cite{FasS18}. Also every other pencil formed by trimming $L(\lambda)$ with respect to $\hat M$ and some other choice of $\hat D$ is strictly equivalent to such a pencil but does not belong to $\mathbb{G}_k(P).$
\end{remark}

As the following example shows, the linearizations $L_t(\lambda)$ arising from the pencils of full Z-rank in $\L_1(P)$ and $\L_2(P)$ are not subclasses of the class of block minimal bases linearizations.

 \begin{example} Consider $P(\lambda) = \lambda^2 A_2 + \lambda A_1 + A_0$ where $$A_2 = \left[\begin{array}{cc} 1 & 2 \\ 2 & 5 \\ 4 & 9 \end{array}\right], \, A_1 = \left[\begin{array}{cc} 3 & 4 \\ 9 & 2 \\ 15 & 10 \end{array}\right], \mbox{ and } A_0 = \left[\begin{array}{cc} 1 & 7 \\ 2 & 5 \\ 4 & 19 \end{array}\right].$$
 Then $L(\lambda) = \lambda \hat X + \hat Y \in \L_1(P)$ where $$\hat X = \left[\begin{array}{cccc} 0 & 0 & -1 & 0 \\ 0 & 0 & 0 & -1 \\ 0 & 0 & 0 & 0 \\ 1 & 2 & 0 & 0 \\ 2 & 5 & 0 & 0 \\ 4 & 9 & 0 & 0 \end{array}\right], \, \hat Y = \left[\begin{array}{cccc} 1 & 0 & 0 & 0 \\ 0 & 1 & 0 & 0 \\ 0 & 0 & 0 & 0 \\ 3 & 4 & 1 & 7 \\ 9 & 2 & 2 & 5 \\ 15 & 10 & 4 & 19 \end{array} \right],$$ with corresponding right ansatz vector vector $v = \left[\begin{array}{c} 0 \\ 1 \end{array}\right].$ Now $\underbrace{\left[\begin{array}{cc} 0 & 1 \\ 1 & 0 \end{array}\right]}_{= : M}v = \left[\begin{array}{c} 1 \\ 0 \end{array}\right],$ and $$(M \otimes I_3)L(\lambda) = \lambda X + Y,$$ where
 %\begin{eqnarray*}
 $$ X  =  (M \otimes I_3)\hat X = \left[\begin{array}{cccc} 1 & 2 & 0 & 0 \\ 2 & 5 & 0 & 0 \\ 4 & 9 & 0 & 0 \\ 0 & 0 & -1 & 0 \\ 0 & 0 & 0 & -1 \\ 0 & 0 & 0 & 0 \end{array}\right], \mbox{ and }
 Y  =  (M \otimes I_3)\hat Y = \left[\begin{array}{cccc} 3 & 4 & 1 & 7 \\ 9 & 12 & 2 & 5 \\ 15 & 10 & 4 & 19 \\ 1 & 0 & 0 & 0 \\ 0 & 1 & 0 & 0 \\ 0 & 0 & 0 & 0 \end{array}\right].$$
% \end{eqnarray*}
 Clearly, $Z = \left[\begin{array}{cc} 1 & 0 \\ 0 & 1 \\ 0 & 0 \end{array}\right]$ is full rank with QR decomposition $Z = I_3 Z.$ Hence, $$\left[\begin{array}{cc} 0 & Q_2^* \end{array}\right](M^T \otimes I_3) = \left[\begin{array}{cccccc} 0 & 0 & 1 & 0 & 0 & 0 \end{array}\right], \mbox{ and } \left[\begin{array}{cccccc} 1 & 0 & 0 & 0 & 0 & 0 \\ 0 & 1 & 0 & 0 & 0 & 0 \\ 0 & 0 & 0 & 1 & 0 & 0 \\ 0 & 0 & 0 & 0 & 1 & 0 \\ 0 & 0 & 0 & -2 & -1 & 1 \\ \hline  0 & 0 & 1 & 0 & 0 & 0 \end{array}\right]$$ is nonsingular.  So, $$L_t(\lambda) := \left[\begin{array}{cccccc} 1 & 0 & 0 & 0 & 0 & 0 \\ 0 & 1 & 0 & 0 & 0 & 0 \\ 0 & 0 & 0 & 1 & 0 & 0 \\ 0 & 0 & 0 & 0 & 1 & 0 \\ 0 & 0 & 0 & -2 & -1 & 1 \end{array}\right] L(\lambda) = \lambda \left[\begin{array}{cccc} 0 & 0 & -1 & 0 \\ 0 & 0 & 0 & -1 \\ 1 & 2 & 0 & 0 \\ 2 & 5 & 0 & 0 \\ 0 & 0 & 0 & 0 \end{array}\right] + \left[\begin{array}{cccc} 1 & 0 & 0 & 0 \\ 0 & 1 & 0 & 0 \\ 3 & 4 & 1 & 7 \\ 9 & 2 & 2 & 5 \\ 0 & 0 & 0 & 0 \end{array}\right],$$ is a strong linearization of $P(\lambda).$ Evidently it is not a block minimal bases linearization.
 \end{example}

\begin{remark}
 It is clear that if $m\geq n$ the size  $(m+(k-1)n)\times kn$ of $L_t(\lambda)$ is the same as that of the first Frobenius companion linearization $C_1(\lambda)$ of $P(\lambda).$  On the other hand, if $m\leq n,$ then $L_t(\lambda)$ is of size $km \times (n + (k-1)m)$ which is the same as that of the second Frobenius companion linearization $C_2(\lambda).$ Since $C_1(\lambda)$ and $C_2(\lambda)$ are the smallest among all possible Fiedler and Block Kronecker linearizations of $P(\lambda),$ therefore, the size of $L_t(\lambda)$ is less than or equal to that of all such linearizations for rectangular matrix polynomials.
 \end{remark}

\begin{remark}
Although the pencils $L_t(\lambda)$ are extracted from g-linearizations in $\L_1(P)$ and $\L_2(P),$ in practice, it is not necessary to form them by trimming g-linearizations. For example we can directly build these linearizations from a given matrix polynomial $P(\lambda) = \sum_{i=0}^k \lambda^i A_i,$ of grade $k$ and $m > n,$  by using the fact that they are of the form $L_t(\lambda) := \tilde D \left(\lambda\left[\begin{array}{c|c}\alpha A_k & X_{12}\\ \hline & -\tilde{R}\end{array}\right]+\left[\begin{array}{c|c} Y_{11}& \alpha A_0\\ \hline \tilde{R} & \end{array}\right] \right)$ where $\alpha \neq 0,$ $X_{12}+Y_{11}=\alpha\begin{bmatrix}A_{k-1}&\dots&A_1\end{bmatrix}$ and ${\tilde D} \in \F^{m + (k-1)n \times m + (k-1)n}$ and $\tilde R \in \F^{(k-1)n \times (k-1)n}$ are nonsingular. The process of trimming g-linearizations to form linearizations of this type can be seen as a means to connect the g-linearizations of $P(\lambda)$ with linearizations.
\end{remark}

 In the next section we undertake a global backward stability analysis of the solution of polynomial eigenvalue problems using $L_t(\lambda)$ on the lines of the analysis in~\cite{DopLPV16} and show that their is in fact a wide choice of optimal strong linearizations (beyond the ones identified in~\cite{DopLPV16}) which can be used to solve the complete eigenvalue problem for $P(\lambda)$ in a globally backward stable manner.

%%%%%%%%%%%%%%%%%%%%%%%%%%%%%%%%%%%%%%%%%%%%%%%%%%%%%%%%
\section{Global backward error analysis of solutions of polynomial eigenvalue problems using linearizations arising from g-linearizations}

In this section we carry out a global backward error analysis of the process of solving the complete eigenvalue problem associated with a rectangular matrix polynomial $P(\lambda) = \sum_{i = 0}^k \lambda^i A_i \in \F[\lambda]^{m \times n}$ of grade $k$ by using linearizations that arise from a g-linearization in $\mathbb{L}_1(P)$ or $\mathbb{L}_2(P).$ It will be an extension of the one in~\cite{DopLPV16} for Block Kronecker linearizations. As mentioned in Section~\ref{intro}, any solution of such a problem involves finding the finite and infinite eigenvalues and associated elementary divisors as well as the left and right minimal bases and indices of $P(\lambda).$ Typically this is done by initially finding the said quantities for some choice of strong linearization via very effective backward stable methods like the {\it staircase algorithm} proposed in~\cite{Van79} and further developed in~\cite{DemK93a, DemK93b}. The backward stability of such algorithms guarantee that any computed solution of the eigenvalue problem corresponding to a linearization say, $L(\lambda)$ of $P(\lambda),$ is the exact solution of the problem for a pencil $L(\lambda) + \Delta L(\lambda)$ where $\frac{\normp{\Delta L}}{\normp{L}} = O(\bf{u})$ with respect to some norm $\normp{\cdot}.$ The solution of the complete eigenvalue problem for $P(\lambda)$ is then computed from the solution for $L(\lambda) + \Delta L(\lambda)$ by applying the same recovery rules to $L(\lambda) + \Delta L(\lambda)$ that would have been applied to the solution for $L(\lambda)$ if it were available. Following~\cite{DopLPV16}, the process is said to be globally backward stable if it is the  exact solution of the complete eigenvalue problem for $P(\lambda) + \Delta P(\lambda)$ with the following conditions being met.

\begin{itemize}

 \item[(a)] If $P(\lambda)$ is of grade $k,$  the perturbed pencil $L(\lambda) + \Delta L(\lambda)$ a strong linearization of $P(\lambda) + \Delta P(\lambda)$ of grade $k$ such that $\frac{\normp{\Delta P }}{\normp{ P }} = O({\bf u}).$
 \item[(b)] The rules for extracting the left and right minimal indices of $P(\lambda)$ from those of $L(\lambda)$ remain the same when they are replaced by $P(\lambda) + \Delta P(\lambda)$ and $L(\lambda) + \Delta L(\lambda)$ respectively.
 \end{itemize}

 The analysis in~\cite{DopLPV16}, showed that (a) and (b) are satisfied for optimal choices of Block Kronecker linearization of $P(\lambda)$ with respect to the norm $$\normp{ P }_F := \sqrt{\sum_{i = 0}^k \| A_i\|_F^2}$$ where $\| A \|_F := \sqrt{\mathrm{trace} \, (A^*A)}$ is the Frobenius norm of $A.$ In particular, it was shown that there exists a constant $C_{P, L}$ depending on $P(\lambda)$ and $L(\lambda)$ such that \begin{equation}\label{ubd16} \frac{\normp{ \Delta P}_F}{\normp{P}_F} \leq C_{P, L} \frac{\normp{ \Delta L }_F}{\normp{ L }_F},\end{equation} where, $C_{P, L} \approx k^3\sqrt{m + n}$ under certain conditions that are satisfied by appropriate choice of Block Kronecker linearizations and scaling of $P(\lambda).$

 We establish that the same analysis can be extended to solutions obtained via linearizations $L_t(\lambda)$ of $P(\lambda) \in \F[\lambda]^{m \times n}$ that arise from g-linearizations in $\L_1(P)$ when $m > n.$ Similar arguments can easily complete the corresponding analysis for the case $m < n$ with respect to linearizations that arise from g-linearizations in $\mathbb{L}_2(P).$

\noindent Our choice of norm  $\normp{ P }_F$ on $\F[\lambda]^{m \times n}$ considered as a vector space over $\F$  is not submultiplicative.  The following  lemma from \cite{DopLPV16} which bounds the Frobenius norm of the product of two matrix polynomials will therefore be useful in the analysis. For notational convenience in this section we set  $$\Lambda_{k,p}(\lambda):=(\Lambda_k(\lambda)\otimes I_p), PQ(\lambda):=P(\lambda)Q(\lambda) \mbox{ and } (P+Q)(\lambda):=P(\lambda)+Q(\lambda),$$ for two matrix polynomials $P(\lambda)$ and $Q(\lambda)$ for which the above products and sums are defined.

\begin{lemma}\label{normp_F}
 Let $P(\lambda)=\sum_{i=0}^{d_1}A_i\lambda^i$ and $Q(\lambda)=\sum_{i=0}^{d_2}B_i\lambda^i$ be two matrix polynomials and such that all the products below are defined. Then the following inequalities hold.

 \begin{enumerate}
  \item $\normp{PQ}_F \leq \min{\{\sqrt{d_1+1},\sqrt{d_2+1}}\}\normp{P}_F\normp{Q}_F$
  \item $\normp{P\Lambda_{k,p}}_F \leq \min{\{\sqrt{d_1+1},\sqrt{k}}\}\normp{P}_F$
 \end{enumerate}

\end{lemma}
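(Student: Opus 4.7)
The plan is to prove both bounds by writing out the coefficients of the product polynomials explicitly and applying the Cauchy--Schwarz inequality, being careful about the number of terms contributing to each coefficient.

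For part 1, I would first expand $PQ(\lambda) = \sum_{s=0}^{d_1+d_2} C_s \lambda^s$, where the coefficient $C_s = \sum_{i+l=s,\, 0\le i\le d_1,\, 0\le l\le d_2} A_i B_l$. Using the submultiplicativity $\|A_iB_l\|_F \leq \|A_i\|_F\|B_l\|_F$ for the Frobenius norm of matrices and the triangle inequality, I get
\[
\|C_s\|_F \leq \sum_{i+l=s} \|A_i\|_F \|B_l\|_F.
\]
Applying Cauchy--Schwarz to the sum on the right gives $\|C_s\|_F^2 \leq N_s \sum_{i+l=s} \|A_i\|_F^2\|B_l\|_F^2$, where $N_s$ is the number of valid index pairs $(i,l)$ with $i+l=s$. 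Summing over $s$ and observing that each pair $(i,l)$ with $0\le i\le d_1$, $0\le l\le d_2$ appears exactly once, one obtains
\[
\normp{PQ}_F^2 \leq \Bigl(\max_s N_s\Bigr) \normp{P}_F^2 \normp{Q}_F^2.
\]
Since $N_s \leq \min\{d_1+1, d_2+1\}$ (either index is constrained to its range), taking square roots yields the claim.

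For part 2, the key obstacle is that naively applying part 1 with $Q(\lambda) = \Lambda_{k,p}(\lambda)$ would give a spurious factor of $\normp{\Lambda_{k,p}}_F = \sqrt{kp}$, which is worse than the asserted bound. To avoid this, I would exploit the block-structure of $\Lambda_{k,p}(\lambda) = \Lambda_k(\lambda)\otimes I_p$ directly. Partitioning each coefficient of $P(\lambda)$ columnwise as $A_i = [A_{i,1}\mid A_{i,2}\mid\cdots\mid A_{i,k}]$ with $A_{i,j} \in \F^{m\times p}$, the product becomes
\[
P(\lambda)\Lambda_{k,p}(\lambda) = \sum_{i=0}^{d_1}\sum_{j=1}^{k} A_{i,j}\, \lambda^{i+k-j},
\]
so the coefficient of $\lambda^s$ is $D_s := \sum_{i+k-j=s} A_{i,j}$, a sum over at most $\min\{d_1+1,k\}$ genuinely distinct blocks.

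Then exactly as in part 1, Cauchy--Schwarz gives $\|D_s\|_F^2 \leq \min\{d_1+1,k\}\sum_{i+k-j=s}\|A_{i,j}\|_F^2$. Summing over $s$, each $\|A_{i,j}\|_F^2$ appears exactly once, and $\sum_{i,j}\|A_{i,j}\|_F^2 = \sum_i \|A_i\|_F^2 = \normp{P}_F^2$ by the column-partition of $A_i$. The stated inequality then follows upon taking square roots. The main subtle point in the whole proof is precisely this observation in part 2: that one must unfold the Kronecker structure rather than treat $\Lambda_{k,p}$ as a generic polynomial, because each ``coefficient'' of $\Lambda_{k,p}$ contributes only $p$ nonzero entries (not $kp$) to each column of the product.
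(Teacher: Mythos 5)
Your proof is correct. Note that the paper itself does not prove this lemma at all: it is imported verbatim from the cited reference \cite{DopLPV16}, so there is no in-paper argument to compare against. Your argument is the standard one (and essentially the one used in that reference): write the coefficients of the product as convolutions, use submultiplicativity of $\|\cdot\|_F$ and Cauchy--Schwarz with the count $N_s\le\min\{d_1+1,d_2+1\}$ of index pairs per coefficient, and sum over $s$ noting each pair occurs once. Your handling of part 2 is also right, and you correctly identify the key point: applying part 1 blindly with $Q=\Lambda_{k,p}$ would introduce the spurious factor $\normp{\Lambda_{k,p}}_F=\sqrt{kp}$, whereas partitioning $A_i=[A_{i,1}\mid\cdots\mid A_{i,k}]$ and using $A_i(\Lambda_k(\lambda)\otimes I_p)=\sum_{j=1}^k A_{i,j}\lambda^{k-j}$ together with $\sum_j\|A_{i,j}\|_F^2=\|A_i\|_F^2$ gives exactly the stated bound $\min\{\sqrt{d_1+1},\sqrt{k}\}\,\normp{P}_F$.
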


Initially we analyse the global backward stability of the process of computing a solution of the complete eigenvalue problem for $P(\lambda)$ arising from linearizations of the form~\eqref{hatlt}. Later on we will extend this analysis to the case where any linearization $L_t(\lambda)$ arising from a g-linearization in $\mathbb{L}_1(P)$
is used.

%\subsection{Global backward error analysis of algorithms using $\hat L_t(\lambda)$}

 Since the matrix $\tilde R \in \F^{(k-1)n}$ of $\hat L_t(\lambda)$ given by~\eqref{hatlt} is upper triangular and nonsingular, $\hat L_t(\lambda)$ is a strong block minimal bases pencil of the form $$\left[\begin{array}{c} A(\lambda) \\ \hline B(\lambda) \end{array}\right]$$ where,
\begin{eqnarray}
A(\lambda) & = & \lambda \left[\begin{array}{cc} \alpha A_k & X_{12}
\end{array}\right],\label{M1} + \left[\begin{array}{cc} Y_{11} & \alpha A_0 \end{array}\right],\label{A}\\
B(\lambda) & = & \lambda \left[\begin{array}{cc} 0_{(k-1)n \times n} & -\tilde{R} \end{array}\right] +
\left[\begin{array}{cc} \tilde{R} & 0_{(k-1)n \times n} \end{array}\right].\label{B}
\end{eqnarray}
Note that $\Lambda_{k,n}(\lambda)^T$ is a dual minimal basis of $B(\lambda).$

Any computed solution of the complete eigenvalue problem associated with $\hat{L}_t(\lambda)$ is an exact solution of a perturbed pencil $\hat L_t(\lambda) + \Delta \hat L_t(\lambda)$ where $$\Delta \hat L_t(\lambda)=\left[\begin{array}{c}\Delta A(\lambda)\\ \hline \Delta B(\lambda)\end{array}\right]$$ with  $\Delta A(\lambda)\in\mathbb{F}[\lambda]^{m\times kn}\text{ and }\Delta B(\lambda)\in\mathbb{F}[\lambda]^{(k-1)n\times kn}$ so that $$\hat{L}_t(\lambda) + \Delta \hat L_t(\lambda) = \left[\begin{array}{c} A(\lambda) + \Delta A(\lambda) \\ \hline B(\lambda) + \Delta B(\lambda) \end{array}\right].$$ Our initial aim is to show that for small enough $\normp{\Delta \hat{L}_t}_F,$ $\hat{L}_t(\lambda) + \Delta \hat{L}_t(\lambda)$ is a strong block minimal bases linearization of some perturbed polynomial $P(\lambda) + \Delta P(\lambda)$ of grade $k$ such that $\frac{\normp{\Delta P}_F}{\normp{P}_F}$ is bounded above by a small multiple of $\frac{\normp{\Delta \hat{L}_t}_F}{\normp{\hat{L}_t}_F}.$

 We establish an upper bound on $\normp{\Delta B}_F$ such that ${\hat L}_t(\lambda) + \Delta {\hat L}_t(\lambda)$ is a strong block minimal bases pencil. This requires that the following conditions are satisfied.

\noindent
{\bf Condition (A)} $B(\lambda) + \Delta B(\lambda)$ is a minimal basis with all row degrees equal to one;

\noindent
{\bf Condition (B)} There exists a matrix polynomial $\Delta D(\lambda) \in \F[\lambda]^{kn \times n}$ of grade $k-1$ such that $\Lambda_{k,n}(\lambda)^T + \Delta D(\lambda)^T$ is a dual minimal basis of $B(\lambda) + \Delta B(\lambda)$ with all row degrees equal to $k-1.$

Following the strategy in~\cite{DopLPV16}, we will use the concept of convolution matrices associated with $P(\lambda)= \sum_{i=0}^{k}A_i\lambda^i$  which are defined as follows.
\begin{equation}\label{conv_matrix}
C_j(P)=\underbrace{\begin{bmatrix}
  A_k&&&\\
  A_{k-1} & A_k &&\\
  \vdots  & A_{k-1} & \ddots &\\
  A_0 & \vdots & \ddots & A_k\\
  & A_0 & & A_{k-1}\\
  &&\ddots & \vdots\\
  &&& A_0
 \end{bmatrix}}_{j+1\text{ block columns}} \mbox{ for } j=0,1,2,\dots.
\end{equation}

The following lemma which states some important and useful properties of convolution matrices can be easily proved.

\begin{lemma}\label{convmprop} Let $P(\lambda)= \sum_{i=0}^{k}A_i\lambda^i$ and $Q(\lambda)= \sum_{i=0}^{l}B_i\lambda^i$ be matrix polynomials of grade $k$ and $l$ respectively and $C_j(P)$ and $C_j(Q)$ for $j=0,1,2,\dots,$ be corresponding convolution matrices as given by~\eqref{conv_matrix}.
\begin{itemize}
 \item[(a)] If $P(\lambda)$ and $Q(\lambda)$ are of same size and grade then $C_j(P + Q)=C_j(P)+C_j(Q)$ for all $j$.
 \item[(b)] $\norm{C_j(P)}_F=\sqrt{j+1}\normp{P}_F$ for all $j$.
 \item[(c)] If the product $P(\lambda)Q(\lambda)$ is defined, then considering it as a grade $k+l$ matrix polynomial, we have $C_0(PQ)=C_{l}(P)C_0(Q)$.
\end{itemize}
\end{lemma}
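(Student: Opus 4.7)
The plan is to dispatch the three parts in order, each by direct inspection of the block structure of the convolution matrices $C_j(\cdot)$ defined in~\eqref{conv_matrix}.

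For part (a), I would simply note that when $P(\lambda)$ and $Q(\lambda)$ have the same size and the same grade $k$, their sum $P(\lambda)+Q(\lambda)=\sum_{i=0}^k(A_i+B_i)\lambda^i$ is also of grade $k$ with coefficients $A_i+B_i$. Since $C_j$ is built entry-by-entry from the coefficient matrices in a fixed layout, each block entry of $C_j(P+Q)$ is just the corresponding block entry of $C_j(P)$ plus that of $C_j(Q)$, which is exactly $C_j(P)+C_j(Q)$. No computation beyond reading off the definition is required.

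For part (b), the key observation is that each of the $j+1$ block columns of $C_j(P)$ is a vertical stack of the coefficients $A_k,A_{k-1},\dots,A_0$ (padded with zero blocks), so each column has squared Frobenius norm exactly $\sum_{i=0}^k\|A_i\|_F^2=\normp{P}_F^2$. Summing over the $j+1$ columns and taking the square root gives $\|C_j(P)\|_F=\sqrt{j+1}\,\normp{P}_F$.

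For part (c), which is the main computation and the only place where care is needed with indices, I would write the $(r,c)$-block entry of $C_l(P)$ (with $r=0,\dots,k+l$ and $c=0,\dots,l$) as $A_{k-r+c}$ (and $0$ when the subscript falls outside $[0,k]$), and the $c$-th block of the single block-column $C_0(Q)$ as $B_{l-c}$. Then the $r$-th block entry of the product $C_l(P)C_0(Q)$ is
\[
\sum_{c}A_{k-r+c}B_{l-c}=\sum_{i+j=k+l-r}A_iB_j,
\]
after the substitution $i=k-r+c$, $j=l-c$. This is precisely the coefficient of $\lambda^{k+l-r}$ in $P(\lambda)Q(\lambda)$ considered as a grade $k+l$ polynomial, which is exactly the $r$-th block entry of $C_0(PQ)$. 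Matching the two expressions block-by-block establishes $C_0(PQ)=C_l(P)C_0(Q)$.

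The only mild obstacle is book-keeping the index ranges in part (c) to confirm that the zero padding in both factors lines up correctly with the zero coefficients of $PQ$ at the extreme block rows; this can be handled by adopting the convention $A_i=0$ for $i\notin[0,k]$ and $B_j=0$ for $j\notin[0,l]$ throughout the indexed sum, after which the change of variables is routine.
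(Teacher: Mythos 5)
Your proof is correct: part (a) follows from entrywise linearity, part (b) from the fact that each of the $j+1$ block columns carries exactly one copy of every coefficient $A_k,\dots,A_0$, and your index computation in part (c), with the convention $A_i=0$ and $B_j=0$ outside the coefficient ranges, correctly identifies the $r$-th block of $C_l(P)C_0(Q)$ with the coefficient of $\lambda^{k+l-r}$ in $PQ$ viewed as a grade $k+l$ polynomial. The paper omits the proof altogether (it is stated as "easily proved"), and your direct block-structure verification is precisely the routine argument that is intended.
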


The next Theorem from~\cite{DopLPV16} for convolution matrices will be useful to show that for sufficiently small $\normp{\Delta B }_F,$ $B(\lambda) + \Delta B(\lambda)$ can be a minimal basis with all row degrees equal to $1.$

\begin{theorem}\label{convmain} For any positive integer $l,$ let $N(\lambda) = A + \lambda B \in \F[\lambda]^{ln \times (l+1)n}$ and $C_j(N)$ for $j = 0, 1, \ldots,$ be the sequence of convolution matrices of $N(\lambda).$ Then $N(\lambda)$ is a minimal basis with all its row degrees equal to $1$ and all the row degrees of any dual minimal basis equal to $l,$ if and only if $C_{l-1}(N) \in \F^{(l+1)l n \times (l+1) l n}$ is nonsingular and $C_l(N) \in \F^{l(l+2)n \times (l+1)^2 n}$ has full row rank.
\end{theorem}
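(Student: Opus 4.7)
My plan is to identify $\ker C_j(N)$ with the $\F$-vector space $\pi_j(V) := \{v \in V \cap \F[\lambda]^{(l+1)n} : \mathrm{deg} \, v \le j\}$, where $V \subset \F(\lambda)^{(l+1)n}$ denotes the right rational null space of $N(\lambda)$. This identification is immediate from the fact that $C_j(N)$ represents the linear map $v \mapsto N v$ sending a polynomial $v(\lambda) = \sum_{i=0}^j v_i\lambda^i$ (represented by $[v_0;\dots;v_j] \in \F^{(j+1)(l+1)n}$) to the coefficient vector of $N(\lambda)v(\lambda)$, which has degree at most $j+1$ and lives in $\F^{(j+2)ln}$. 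The main engine of the argument will be Forney's dimension formula
$$\dim_\F \pi_j(V) = \sum_{i=1}^{n} \max(0, j+1-\epsilon_i),$$
valid whenever $V$ has a minimal basis with row degrees $\epsilon_1\le\cdots\le\epsilon_n$; this follows from the predictable-degree property of minimal bases and is standard in the block minimal bases literature.

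For the forward direction, assume $N(\lambda)$ is a minimal basis with row degrees all equal to $1$ and all dual row degrees equal to $l$. Since $N$ has full normal row rank $ln$, $V$ has $\F(\lambda)$-dimension exactly $n$. Substituting $\epsilon_1=\cdots=\epsilon_n=l$ in Forney's formula yields $\dim_\F \pi_{l-1}(V)=0$ and $\dim_\F \pi_l(V)=n$. Therefore $\ker C_{l-1}(N) = 0$, and since $C_{l-1}(N)$ is square of order $(l+1)ln$ it is nonsingular. Moreover $\rank\, C_l(N) = (l+1)^2 n - n = l(l+2)n$, which equals its number of rows, so $C_l(N)$ has full row rank.

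For the reverse direction, I first extract the minimal-basis property of $N$ from the rank hypotheses. Surjectivity of $C_{l-1}(N)$ (which is square and nonsingular) says that every $w(\lambda) \in \F[\lambda]^{ln}$ of degree at most $l$ equals $N(\lambda) v(\lambda)$ for some $v(\lambda)$ of degree at most $l-1$. Taking $w$ constant and evaluating at any $\lambda_0\in\F$ gives $N(\lambda_0)v(\lambda_0)=w$, so $N(\lambda_0)$ is surjective at every finite point. Taking $w(\lambda)=\lambda^l w_l$ and equating coefficients of $\lambda^l$ yields $B v_{l-1}=w_l$, so the leading coefficient $B$ is surjective. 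By the classical rank characterization of minimal bases (full row rank at every $\lambda_0$ together with row-reducedness), $N$ is a minimal basis, and since $B$ has full row rank every row of the pencil has degree exactly $1$; hence $V$ has $\F(\lambda)$-dimension $n$. Letting $\epsilon_1\le\cdots\le\epsilon_n$ denote the row degrees of a minimal basis of $V$, the hypothesis $\dim_\F \pi_{l-1}(V)=0$ forces $\max(0,l-\epsilon_i)=0$, hence $\epsilon_i\ge l$ for all $i$; the hypothesis $\dim_\F \pi_l(V)=n$ then forces $\sum_i \max(0,l+1-\epsilon_i)=n$, and under the constraint $\epsilon_i\ge l$ each summand is at most $1$, so equality is possible only when $\epsilon_i=l$ for every $i$.

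The main obstacle is essentially organizational: matching the block-row/block-column conventions of the convolution matrix in~\eqref{conv_matrix} (top-to-bottom ordering $A_k,\ldots,A_0$) with the coefficient ordering in $v(\lambda)$, and carefully handling the evaluation-at-$\lambda_0$ and leading-coefficient arguments that pass from surjectivity of $C_{l-1}(N)$ to the rank of $N(\lambda_0)$ and of $B$. Once the identification $\ker C_j(N)\cong \pi_j(V)$ is cleanly set up, the rest is bookkeeping around Forney's two foundational tools—the dimension formula for $\pi_j(V)$ and the rank characterization of minimal bases—both of which are standard for rational subspaces and are invoked as black boxes in~\cite{DopLPV16}.
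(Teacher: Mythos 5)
The paper does not actually prove Theorem~\ref{convmain}: it is quoted from \cite{DopLPV16} and used as a black box, so there is no internal proof to compare your argument against. Judged on its own merits, your proposal is correct and follows essentially the standard route in that literature. The identification $\ker C_j(N)\cong\pi_j(V)$ is exactly the coefficient-map reading of Lemma~\ref{convmprop}(c); Forney's dimension count $\dim_\F \pi_j(V)=\sum_{i}\max(0,j+1-\epsilon_i)$ (predictable-degree property of minimal bases) then settles the forward direction by pure counting, and in the converse the surjectivity of the square nonsingular $C_{l-1}(N)$ neatly yields full row rank of $N(\lambda_0)$ and of the leading coefficient $B$, so the rank-plus-row-reducedness characterization of minimal bases applies, after which the two kernel dimensions force all minimal indices of the null space to equal $l$. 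Two small points should be tightened, though neither is a genuine gap: (i) the minimal-basis criterion requires $N(\lambda_0)$ to have full row rank for every $\lambda_0$ in the algebraic closure of $\F$ (relevant when $\F=\R$); your argument does deliver this, because $N(\lambda)v(\lambda)=w$ is a polynomial identity that can be evaluated at any $\lambda_0\in\C$ with $w$ ranging over a spanning set of $\C^{ln}$, but it should be phrased that way rather than ``for $\lambda_0\in\F$''; (ii) both directions use, implicitly, that dual minimal bases of $N(\lambda)$ exist and are precisely (transposed) minimal bases of the rational null space $V=N_r(N)$, so that ``row degrees of any dual minimal basis'' and ``minimal indices of $V$'' are interchangeable --- this is standard but deserves an explicit sentence, since the theorem's statement is phrased in terms of dual bases while your counting argument is phrased in terms of minimal indices of $V$.
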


Observing that $B(\lambda) = \tilde R (H_{k-1}(\lambda) \otimes I_n)$ where $H_{k-1}(\lambda)$ is given by~\eqref{dualminb2},
%\begin{equation}\label{tau}
%\tau(\lambda)=\lambda\begin{bmatrix}0_{(k-1)n\times n} & -I_{(k-1)n}\end{bmatrix}+\begin{bmatrix}I_{(k-1)n} & 0_{(k-1)n\times n}\end{bmatrix},
%\end{equation}
the next lemma which is proved in the appendix will be useful in establishing a bound on $\normp{\Delta B}_F$ that achieves the desired objectives.~\footnote{A proof of this result is available in~\cite{DopLPV18} which is a revised version of~\cite{DopLPV16}. Our proof was made independently and with different arguments.}

\begin{lemma}\label{tautheorem}
For $\tau(\lambda) := (H_{k-1}(\lambda) \otimes I_n),$  $\sigma_{\min}(C_{k-2}(\tau))= \sigma_{\min}(C_{k-1}(\tau))=2\sin(\frac{\pi}{4k-2})\geq\frac{3}{2k}$.
\end{lemma}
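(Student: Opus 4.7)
The plan is to reduce to the scalar case, analyze the Gram matrix of the scalar convolution matrix, and finish with a short estimate on $\sin$. Since $\tau(\lambda) = H_{k-1}(\lambda) \otimes I_n$, the convolution matrix $C_j(\tau)$ agrees with $C_j(H_{k-1}) \otimes I_n$ up to a permutation that merely reorders the levels of nested block structure. Hence $\sigma_{\min}(C_j(\tau)) = \sigma_{\min}(C_j(H_{k-1}))$, and it suffices to treat the scalar matrices $C_j(H_{k-1})$ for $j \in \{k-2, k-1\}$.

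Next, I would analyze the Gram matrix $M_j := C_j(H_{k-1})^T C_j(H_{k-1})$. Indexing the columns of $C_j(H_{k-1})$ by pairs $(c, b)$ where $c \in \{1, \ldots, j+1\}$ is the block column and $b \in \{1, \ldots, k\}$ is the position within the block, each column has at most two nonzero entries: a $+1$ at block row $c$, position $b-1$ (if $b \geq 2$) coming from the leading coefficient of $H_{k-1}$, and a $-1$ at block row $c+1$, position $b$ (if $b \leq k-1$) coming from the constant coefficient. A direct computation of inner products then shows that $(M_j)_{(c_1, b_1),(c_2, b_2)}$ is nonzero only when $b_1 - c_1 = b_2 - c_2$ and $|c_1 - c_2| \leq 1$. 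Grouping columns by the diagonal index $d := b - c$ therefore block-diagonalizes $M_j$ (after a permutation), and each block is a symmetric tridiagonal matrix with $-1$'s on the off-diagonals and diagonal entries equal to $2$ in the interior and $1$ at any endpoint with $b \in \{1, k\}$.

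Then I would identify each block with a classical discrete Laplacian having a closed-form spectrum. For $j = k-2$, the two largest blocks (each of size $k - 1$, corresponding to $d = 0$ and $d = 1$) are copies of the ``Dirichlet--Neumann'' tridiagonal $T_{k-1}$ whose eigenvalues are $\{4\sin^2((2s-1)\pi/(4k-2)) : s = 1, \ldots, k-1\}$; this can be derived via the ansatz $v_i = \cos(\theta(i - 1/2))$ together with both boundary conditions. The minimum eigenvalue is $4\sin^2(\pi/(4k-2))$, and since the minimum eigenvalue of $T_\ell$ is strictly decreasing in $\ell$, all smaller blocks contribute strictly larger minima, yielding $\sigma_{\min}(C_{k-2}(H_{k-1})) = 2\sin(\pi/(4k-2))$. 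For $j = k-1$ a block of size $k$ appears at $d = 0$, namely the ``Neumann--Neumann'' Laplacian $N_k$ with spectrum $\{4\sin^2(m\pi/(2k)) : m = 0, \ldots, k-1\}$. The eigenvalue $0$ corresponds to the known right null vector $\Lambda_k$ of $C_{k-1}(H_{k-1})$, while its smallest positive eigenvalue $4\sin^2(\pi/(2k))$ strictly exceeds $4\sin^2(\pi/(4k-2))$ (still attained by the $T_{k-1}$ blocks now appearing at $d = \pm 1$), so the same conclusion $\sigma_{\min}(C_{k-1}(H_{k-1})) = 2\sin(\pi/(4k-2))$ holds.

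Finally, I would establish the inequality $2\sin(\pi/(4k-2)) \geq 3/(2k)$ for $k \geq 2$. Setting $x = \pi/(4k-2)$ and using $\sin(x) \geq x - x^3/6$ reduces the claim to $\pi/(2k-1) - \pi^3/(3(4k-2)^3) \geq 3/(2k)$. A direct calculation gives $\pi/(2k-1) - 3/(2k) = ((2\pi - 6)k + 3)/(2k(2k-1)) \geq 3/(4k^2)$ for all $k \geq 1$, while $\pi^3/(3(4k-2)^3) \leq \pi^3/(24 k^3) \leq 3/(4k^2)$ whenever $k \geq 2$; combining these two inequalities yields the bound (the boundary case $k = 2$ may also be checked directly via $2\sin(\pi/6) = 1 \geq 3/4$). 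The main obstacle in this approach is the bookkeeping in the second step: one must carefully verify that across every block size and every boundary-condition pattern, the smallest (positive) eigenvalue is attained by the largest $T_{k-1}$ block and never by a smaller block or by the $N_k$ block from $d = 0$.
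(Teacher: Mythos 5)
Your proposal is correct and follows essentially the same route as the paper's appendix proof: reduce to the scalar matrices $C_j(H_{k-1})$, pass to the Gram matrix, and observe that grouping columns by the diagonal index $d=b-c$ (which is exactly what the paper's stride-$(k+1)$ permutations $\mathcal{\tilde P},\mathcal{\hat P}$ do) block-diagonalizes it into tridiagonal Laplacian-type blocks, with the minimum (nonzero) eigenvalue attained by the size-$(k-1)$ mixed-boundary block and the $d=0$ Neumann--Neumann block of $C_{k-1}$ accounting for the single zero eigenvalue coming from $\Lambda_k$. The only differences are cosmetic: you compute the block spectra directly via the cosine ansatz where the paper cites \cite{Yue05}, and you supply an explicit argument for the elementary bound $2\sin\bigl(\pi/(4k-2)\bigr)\geq 3/(2k)$, which the paper leaves unproved; both of these check out.
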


The following result bounds $\normp{\Delta B}_F$ such that $B(\lambda) + \Delta B(\lambda)$ is a minimal basis with all row degrees equal to $1.$

\begin{theorem}\label{L2_minimalbasis}
   Let $B(\lambda)$ be the pencil given by~\eqref{B}, and $\Delta B(\lambda)\in \mathbb{F}[\lambda]^{(k-1)n\times kn}$ be any pencil such that
   \begin{equation}\label{normdeltadin} \normp{\Delta B}_F < \displaystyle{\frac{3 \sigma_{\min}(\tilde{R})}{2k^{3/2}}}. \end{equation}  Then $B(\lambda)+\Delta B(\lambda)$ is a minimal basis with all its row degrees equal to $1$ and all row degrees of any minimal basis dual to it equal to $k-1$.
\end{theorem}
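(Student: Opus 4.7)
The plan is to invoke Theorem~\ref{convmain} with $l=k-1$, which reduces the claim to two assertions: (i) $C_{k-2}(B+\Delta B)$ is nonsingular, and (ii) $C_{k-1}(B+\Delta B)$ has full row rank. Comparing the pencil~\eqref{B} with the matrix $H_{k-1}(\lambda)\otimes I_n$ from~\eqref{dualminb2}, one reads off the factorization $B(\lambda) = -\tilde{R}\,\tau(\lambda)$ where $\tau(\lambda) := H_{k-1}(\lambda)\otimes I_n$. Since the convolution matrix of a polynomial pre-multiplied by a constant matrix admits an obvious block-diagonal pull-out, this yields $C_j(B) = -(I_{j+2}\otimes \tilde{R})\,C_j(\tau)$ for every $j\geq 0$.

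Next, using Lemma~\ref{convmprop}(a), I would write $C_j(B+\Delta B) = C_j(B) + C_j(\Delta B)$, so that the standard Weyl perturbation inequality for singular values gives $\sigma_{\min}(C_j(B+\Delta B)) \geq \sigma_{\min}(C_j(B)) - \|C_j(\Delta B)\|_2$. Since $\tilde{R}$ is nonsingular, the factor $I_{j+2}\otimes \tilde{R}$ is square and nonsingular with smallest singular value $\sigma_{\min}(\tilde{R})$, so the factorization above implies $\sigma_{\min}(C_j(B)) \geq \sigma_{\min}(\tilde{R})\,\sigma_{\min}(C_j(\tau))$. Lemma~\ref{tautheorem} then gives $\sigma_{\min}(C_j(\tau)) \geq 3/(2k)$ for both $j=k-2$ and $j=k-1$. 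On the perturbation side, Lemma~\ref{convmprop}(b) yields $\|C_j(\Delta B)\|_2 \leq \|C_j(\Delta B)\|_F = \sqrt{j+1}\,\normp{\Delta B}_F$.

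Taking $j=k-2$ and $j=k-1$ in turn, $C_{k-2}(B+\Delta B)$ is nonsingular whenever $\sqrt{k-1}\,\normp{\Delta B}_F < 3\sigma_{\min}(\tilde{R})/(2k)$, and $C_{k-1}(B+\Delta B)$ has full row rank whenever $\sqrt{k}\,\normp{\Delta B}_F < 3\sigma_{\min}(\tilde{R})/(2k)$. The second, sharper, inequality is precisely the hypothesis $\normp{\Delta B}_F < 3\sigma_{\min}(\tilde{R})/(2k^{3/2})$, and it automatically implies the first. Both (i) and (ii) thus hold, and Theorem~\ref{convmain} delivers the conclusion.

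The main obstacle is the bookkeeping around the factorization at the convolution-matrix level: one must verify that the signs from \eqref{B} and \eqref{dualminb2} combine correctly to give $B(\lambda) = -\tilde{R}\,\tau(\lambda)$, and one must be careful that the singular-value inequality $\sigma_{\min}(MC) \geq \sigma_{\min}(M)\,\sigma_{\min}(C)$ is used only with $M = I_{j+2}\otimes \tilde{R}$ square and nonsingular, so that the bound remains valid when $C = C_{k-1}(\tau)$ is rectangular (full row rank rather than invertible). Everything else is a direct assembly of the tools already developed.
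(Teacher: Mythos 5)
Your proposal is correct and follows essentially the same route as the paper: reduce to Theorem~\ref{convmain}, factor $C_j(B)$ as a block-diagonal copy of $\tilde R$ times $C_j(\tau)$ to get $\sigma_{\min}(C_j(B))\geq\sigma_{\min}(\tilde R)\,\sigma_{\min}(C_j(\tau))\geq 3\sigma_{\min}(\tilde R)/(2k)$ via Lemma~\ref{tautheorem}, and then use Lemma~\ref{convmprop} with a singular-value perturbation bound to conclude that $C_{k-2}(B+\Delta B)$ is nonsingular and $C_{k-1}(B+\Delta B)$ has full row rank. Your explicit Weyl-inequality step and the sign bookkeeping $B(\lambda)=-\tilde R\,\tau(\lambda)$ (immaterial for singular values) are the only cosmetic differences from the paper's argument.
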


\begin{proof}
In view of Theorem~\ref{convmain},  the proof follows by establishing that $C_{k-2}(B+\Delta B)$ is nonsingular and $C_{k-1}(B+\Delta B)$ has full row rank.
 For $ j = k-1 \mbox{ or } k-2,$
\begin{equation} \sigma_{\min}(C_j(B)) =  \sigma_{\min}\left(\begin{bmatrix}\tilde R&&\\&\ddots&\\&&\tilde R\end{bmatrix}C_j(\tau)\right)
 \geq \sigma_{\min}(\tilde R)\sigma_{\min}(C_j(\tau)).
 \end{equation}
  Therefore by Lemma~\ref{tautheorem},
  \begin{equation}\label{sigmincjk}
  \sigma_{\min}(C_j(B)) \geq 2\sigma_{\min}(\tilde R)\sin{\frac{\pi}{4k-2}} \geq \sigma_{\min}(\tilde{R})\frac{3}{2k}.
  \end{equation}
  Since $\tilde R$ is nonsingular, it follows that $C_{k-2}(B)$ is nonsingular and $C_{k-1}(B)$ has full row rank. By Lemma~\ref{convmprop}(a),
  $$C_j(B+\Delta B)=C_j(B)+C_j(\Delta B)$$ for $j = k-1$ and $k-2.$ Therefore  $C_{k-2}(B+\Delta B)$ is nonsingular and $C_{k-1}(B+\Delta B)$ has full row rank if $\norm{C_j(\Delta B)}_F<\sigma_{\min}(C_j(B))$  for both values of $j.$ But both inequalities follow from Lemma~\ref{convmprop}(b), and the relations~\eqref{normdeltadin} and~\eqref{sigmincjk}. Hence the proof. \end{proof}
%% which imply that
%%$$\|C_j(\Delta B)\|_F = \sqrt{k} \normp{\Delta B}_F < \frac{3\sigma_{\min}(\tilde R)}{2k} < \sigma_{\min}(C_j(B))$$
%

Now the following result establishes the required upper bound on $\normp{\Delta B}_F$ such that both {\bf Condition (A)} and {\bf Condition (B)} are fulfilled. The proof is omitted as it follows by arguing as in the proof of~\cite[Theorem 6.18]{DopLPV16}.

\begin{theorem}\label{existanceof_delD}
 Let $B(\lambda)$ be the pencil given by~\eqref{B} and $\Delta B(\lambda)\in \mathbb{F}[\lambda]^{(k-1)n\times kn}$ be any pencil such that
 \begin{equation}\label{normdeltad} \normp{\Delta B}_F<\frac{ \sigma_{\min}(\tilde R)}{2k^{3/2}}. \end{equation}
  Then there exists a matrix polynomial $\Delta D(\lambda)\in \mathbb{F}[\lambda]^{kn\times n}$ of grade $k-1$ such that
 \begin{itemize}
  \item[(a)] $B(\lambda)+\Delta B(\lambda)$ and $\Lambda_{k,n}(\lambda)^T+\Delta D(\lambda)^T$ are dual minimal bases, with all the row degrees equal to $1$ and $k-1$ respectively, and
  \item[(b)] $\normp{\Delta D}_F\leq \frac{k\sqrt{2}}{\sigma_{\min}(\tilde R)}\normp{\Delta B}_F<\frac{1}{\sqrt{2k}}$.
 \end{itemize}

\end{theorem}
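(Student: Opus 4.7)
The plan is to first invoke Theorem~\ref{L2_minimalbasis} to conclude that $B(\lambda)+\Delta B(\lambda)$ is itself a minimal basis with all row degrees equal to $1$; this is immediate because the hypothesis $\normp{\Delta B}_F<\sigma_{\min}(\tilde R)/(2k^{3/2})$ is stricter than the bound $3\sigma_{\min}(\tilde R)/(2k^{3/2})$ appearing there. What remains is to construct a grade $k-1$ polynomial $\Delta D(\lambda)$ such that $\Lambda_{k,n}^T+\Delta D^T$ is a minimal basis dual to $B+\Delta B$, that is, $(B+\Delta B)(\Lambda_{k,n}+\Delta D)=0$. Using $B\Lambda_{k,n}=\tilde R(H_{k-1}(\lambda)\Lambda_k(\lambda)\otimes I_n)=0$, this equation simplifies to $B\,\Delta D=-\Delta B\,\Lambda_{k,n}-\Delta B\,\Delta D$.

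Following~\cite[Theorem~6.18]{DopLPV16}, I will translate this identity into an equation on coefficient vectors via Lemma~\ref{convmprop}(c), giving
\[
C_{k-1}(B)\,C_0(\Delta D)=-C_0(\Delta B\,\Lambda_{k,n})-C_0(\Delta B\,\Delta D),
\]
and then define a fixed-point map $T$ on the vector space of grade $k-1$, $kn\times n$ matrix polynomials by letting $C_0(T(\Delta D))$ be the minimum-norm solution $-C_{k-1}(B)^{+}[C_0(\Delta B\,\Lambda_{k,n})+C_0(\Delta B\,\Delta D)]$ obtained from the Moore-Penrose pseudoinverse (recall that $C_{k-1}(B)$ has full row rank by the proof of Theorem~\ref{L2_minimalbasis}, with $\sigma_{\min}(C_{k-1}(B))\ge 3\sigma_{\min}(\tilde R)/(2k)$). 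Combining this singular-value bound with Lemma~\ref{convmprop}(b) and both parts of Lemma~\ref{normp_F} will yield
\[
\normp{T(0)}_F\le\tfrac{2k\sqrt{2}}{3\sigma_{\min}(\tilde R)}\normp{\Delta B}_F,\quad
\normp{T(\Delta D)-T(\Delta D')}_F\le\tfrac{2k\sqrt{2}}{3\sigma_{\min}(\tilde R)}\normp{\Delta B}_F\,\normp{\Delta D-\Delta D'}_F.
\]

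Under the hypothesis on $\normp{\Delta B}_F$, the Lipschitz constant $\rho:=\tfrac{2k\sqrt{2}}{3\sigma_{\min}(\tilde R)}\normp{\Delta B}_F$ satisfies $\rho\le 1/3$, so the Banach fixed-point theorem produces a fixed point $\Delta D^{*}$ with
\[
\normp{\Delta D^{*}}_F\le\frac{\normp{T(0)}_F}{1-\rho}\le\frac{3}{2}\normp{T(0)}_F\le\frac{k\sqrt{2}}{\sigma_{\min}(\tilde R)}\normp{\Delta B}_F<\frac{1}{\sqrt{2k}},
\]
which gives (b). For (a), the fixed-point equation gives $(B+\Delta B)(\Lambda_{k,n}+\Delta D^{*})=0$, and since $\normp{\Delta D^{*}}_F<1/\sqrt{2k}\le 1$, the coefficient of $\lambda^{k-1}$ in each row of $\Lambda_{k,n}^T+(\Delta D^{*})^T$ is of the form $e_i^T+\text{(small)}$ and therefore nonzero, so every row has degree exactly $k-1$; the leading row-coefficient matrix is a perturbation of $I_n$ and hence invertible, which together with the annihilation relation and the matching dimensions promotes $\Lambda_{k,n}^T+(\Delta D^{*})^T$ to a minimal basis dual to $B+\Delta B$.

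The main obstacle will be the contraction estimate: calibrating the constants so that the single hypothesis $\normp{\Delta B}_F<\sigma_{\min}(\tilde R)/(2k^{3/2})$ simultaneously forces $\rho\le 1/3$ and delivers the prescribed quantitative bound on $\normp{\Delta D^{*}}_F$. Verifying that the pseudoinverse solution is genuinely the coefficient vector of a grade $k-1$ polynomial is automatic from the column dimension $k^{2}n$ of $C_{k-1}(B)$, but is worth recording as part of the argument.
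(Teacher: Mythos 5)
Your proposal is correct, and it uses exactly the toolkit the paper points to (Theorem~\ref{L2_minimalbasis}, convolution matrices, the bound $\sigma_{\min}(C_{k-1}(B))\ge \frac{3}{2k}\sigma_{\min}(\tilde R)$ coming from Lemma~\ref{tautheorem}, and Lemmas~\ref{convmprop} and~\ref{normp_F}), but the construction of $\Delta D$ follows a different mechanism than the argument of \cite[Theorem 6.18]{DopLPV16} to which the paper defers. There one notes that the defining identity is \emph{linear} in the unknown once written as $(B+\Delta B)\Delta D=-\Delta B\,\Lambda_{k,n}$, i.e.\ $C_{k-1}(B+\Delta B)\,C_0(\Delta D)=-C_0(\Delta B\,\Lambda_{k,n})$; Weyl's inequality gives $\sigma_{\min}(C_{k-1}(B+\Delta B))\ge \frac{3\sigma_{\min}(\tilde R)}{2k}-\sqrt{k}\,\normp{\Delta B}_F>\frac{\sigma_{\min}(\tilde R)}{k}$, and the minimum-norm solution then yields $\normp{\Delta D}_F\le \frac{k\sqrt{2}}{\sigma_{\min}(\tilde R)}\normp{\Delta B}_F$ in one step. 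You instead keep the unperturbed operator $C_{k-1}(B)^{+}$ and treat $\Delta B\,\Delta D$ as a perturbation, resolving the resulting (in fact affine) fixed-point equation by contraction. Your estimates $\normp{T(0)}_F\le\frac{2\sqrt{2}\,k}{3\sigma_{\min}(\tilde R)}\normp{\Delta B}_F$ and $\rho\le\frac{\sqrt{2}}{3\sqrt{k}}\le\frac{1}{3}$ (this uses $k\ge 2$, which the paper assumes) are correct, a fixed point does satisfy $(B+\Delta B)(\Lambda_{k,n}+\Delta D^{*})=0$ because $C_{k-1}(B)C_{k-1}(B)^{+}=I$, and with $\frac{1}{1-\rho}\le\frac{3}{2}$ you recover exactly the constant in (b). So the two routes deliver the same bound; the direct linear solve is shorter, while your contraction argument avoids having to bound the singular values of the perturbed convolution matrix.

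The only step to tighten is the promotion in (a). The grade-$(k-1)$ coefficient of $\Lambda_{k,n}(\lambda)^T+\Delta D^{*}(\lambda)^T$ is $n\times kn$, so ``invertible'' should read ``full row rank'' (its leading $n\times n$ block is $I_n$ plus a matrix of norm less than $1$). From this full row rank the $n$ rows are linearly independent over $\F(\lambda)$ (a nonzero polynomial combination has a nonzero top-degree coefficient), hence by the annihilation relation they span the $n$-dimensional right null space of $B(\lambda)+\Delta B(\lambda)$; since Theorem~\ref{L2_minimalbasis} guarantees that every minimal basis dual to $B(\lambda)+\Delta B(\lambda)$ has all row degrees equal to $k-1$, the minimal order of that null space is $(k-1)n$, which your candidate basis attains, so it is a minimal basis and hence dual to $B(\lambda)+\Delta B(\lambda)$ with all row degrees $k-1$. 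This order count is the precise content behind ``matching dimensions'' and should be recorded; with it the proposal is complete.
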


Next we have the main result which completes the global backward error analysis for solutions of the complete eigenvalue problem for $P(\lambda)$ obtained from the linearizations $\hat L_t(\lambda).$

\begin{theorem}\label{main_LThat}
Let ${\hat L}_t(\lambda)$ be any linearization of $P(\lambda) = \sum_{i=0}^k \lambda^iA_i \in \F[\lambda]^{m \times n}$ of grade $k$ with $m > n,$ of the form~\eqref{hatlt}.
Let $A(\lambda)$ and $B(\lambda)$ be the blocks of ${\hat L}_t(\lambda)$ as specified by~\eqref{A} and \eqref{B} respectively and $\tilde{R} \in \F^{m + (k-1)n}$ be the nonsingular upper triangular matrix appearing in the block $B(\lambda).$
  If $\Delta \hat L_t(\lambda)$ is any pencil of the same size as $\hat L_t(\lambda)$ such that
  \begin{equation}\label{finalbndthat} \normp{\Delta \hat L_t}_F<\frac{\sigma_{\min}(\tilde R)}{2k^{3/2}} \end{equation}
  then $\hat L_t(\lambda)+\Delta \hat L_t(\lambda)$ is a strong linearization of a matrix polynomial $P(\lambda)+\Delta P(\lambda)$ of grade k and
  \begin{equation}\label{finaleq}  \frac{\normp{\Delta P}_F}{\normp{P}_F}\leq \hat C_{\hat L_t, P} \frac{\normp{\Delta \hat L_t}_F}{\normp{\hat L_t}_F} \end{equation}
  where $\hat C_{\hat L_t,P} =\frac{1}{\lvert\alpha\rvert}\frac{\normp{\hat L_t}_F}{\normp{P}_F}\left(3+2k\frac{\normp{A}_F}{\sigma_{\min}(\tilde R)}\right)$.

  The right minimal indices of $\hat L_t(\lambda)+\Delta \hat L_t(\lambda)$ are those of $P(\lambda)+\Delta P(\lambda)$ shifted by $k-1$ and left minimal indices of $\hat L_t(\lambda)+\Delta \hat L_t(\lambda)$ are same as those of $P(\lambda)+\Delta P(\lambda)$, which is the same as the corresponding relationship between the minimal indices of $\hat L_t(\lambda)$ and $P(\lambda)$.
 \end{theorem}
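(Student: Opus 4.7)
The plan is to recognize $\hat{L}_t(\lambda)$ as a strong block minimal bases pencil and then perturb both of its building blocks, combining Theorem~\ref{existanceof_delD} (which handles the perturbation of the block $B(\lambda)$ and produces a dual minimal basis for the perturbed block) with a direct computation of the polynomial associated with the perturbed strong block minimal bases pencil via Theorem~\ref{blockmin}. The first observation is that for $\hat{L}_t(\lambda)$ in the form~\eqref{hatlt}, direct expansion using $X_i + Y_i = \alpha A_{k-i}$ shows $A(\lambda)\Lambda_{k,n}(\lambda)=\alpha P(\lambda)$, so $P(\lambda)=\tfrac{1}{\alpha}A(\lambda)\Lambda_{k,n}(\lambda)$, and $\Lambda_{k,n}(\lambda)^T$ is a dual minimal basis of $B(\lambda)$ with all row degrees equal to $k-1$. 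Thus $\hat{L}_t(\lambda)$ is a strong block minimal bases pencil in the sense of Theorem~\ref{blockmin}.

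Next, I would write $\Delta \hat L_t(\lambda)=\left[\tfrac{\Delta A(\lambda)}{\Delta B(\lambda)}\right]$ and note that $\normp{\Delta B}_F\le \normp{\Delta \hat L_t}_F$. The hypothesis~\eqref{finalbndthat} therefore gives $\normp{\Delta B}_F < \sigma_{\min}(\tilde R)/(2k^{3/2})$, which is exactly the hypothesis of Theorem~\ref{existanceof_delD}. Applying that theorem yields a matrix polynomial $\Delta D(\lambda)\in \F[\lambda]^{kn\times n}$ of grade $k-1$ such that $B(\lambda)+\Delta B(\lambda)$ and $\Lambda_{k,n}(\lambda)^T+\Delta D(\lambda)^T$ are dual minimal bases with all row degrees equal to $1$ and $k-1$ respectively, and satisfying $\normp{\Delta D}_F\le \tfrac{k\sqrt{2}}{\sigma_{\min}(\tilde R)}\normp{\Delta B}_F < \tfrac{1}{\sqrt{2k}}$. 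Consequently $\hat L_t(\lambda)+\Delta \hat L_t(\lambda)$ is a strong block minimal bases pencil, and Theorem~\ref{blockmin} identifies it as a strong linearization of the grade-$k$ matrix polynomial $Q(\lambda)=(A+\Delta A)(\lambda)\bigl(\Lambda_{k,n}(\lambda)+\Delta D(\lambda)\bigr)$, with the right minimal indices shifted by $\deg \, C = k-1$ and the left minimal indices unchanged (since $\hat C=I_m$). Setting $P(\lambda)+\Delta P(\lambda):=\tfrac{1}{\alpha}Q(\lambda)$ reproduces exactly the same shift structure that $\hat L_t(\lambda)$ has relative to $P(\lambda)$, establishing the last sentence of the theorem.

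It remains to bound $\normp{\Delta P}_F$. Using $A\Lambda_{k,n}=\alpha P$ I would compute
\[
\alpha\,\Delta P(\lambda)=A(\lambda)\Delta D(\lambda)+\Delta A(\lambda)\Lambda_{k,n}(\lambda)+\Delta A(\lambda)\Delta D(\lambda),
\]
and apply Lemma~\ref{normp_F}: since $A$ and $\Delta A$ have grade $1$, each of the three terms picks up at most a factor $\sqrt{2}$. Substituting the bound on $\normp{\Delta D}_F$ produced by Theorem~\ref{existanceof_delD} and using $\normp{\Delta A}_F,\normp{\Delta B}_F\le \normp{\Delta \hat L_t}_F$ gives
\[
|\alpha|\,\normp{\Delta P}_F \le \left(\frac{2k\,\normp{A}_F}{\sigma_{\min}(\tilde R)}+\sqrt{2}+\frac{1}{\sqrt{k}}\right)\normp{\Delta \hat L_t}_F,
\]
and since $\sqrt{2}+1/\sqrt{k}\le \sqrt{2}+1/\sqrt{2}=3/\sqrt{2}<3$ for $k\ge 2$, this absorbs into the coefficient $3+2k\,\normp{A}_F/\sigma_{\min}(\tilde R)$. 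Dividing through by $|\alpha|\,\normp{P}_F$ and multiplying and dividing by $\normp{\hat L_t}_F$ delivers exactly~\eqref{finaleq} with the stated $\hat C_{\hat L_t,P}$.

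The step I expect to be most delicate is not the algebraic bound (which is routine once $\Delta D$ is in hand) but pinning down the correct polynomial $Q(\lambda)$ and verifying that $\Delta P(\lambda)$ so defined is of grade $k$ with the right interpretation, so that Theorem~\ref{blockmin} genuinely yields a strong linearization of a perturbed polynomial of grade $k$ (rather than of larger grade). This is ensured because both $A+\Delta A$ and $B+\Delta B$ are pencils, so $Q(\lambda)$ has degree at most $1+\deg \Lambda_{k,n}=k$, and considering it as a grade-$k$ polynomial is exactly the conclusion of Theorem~\ref{blockmin}(a).
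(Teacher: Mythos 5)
Your proposal is correct and follows essentially the same route as the paper's proof: reduce \eqref{finalbndthat} to the hypothesis of Theorem~\ref{existanceof_delD} via $\normp{\Delta B}_F\le\normp{\Delta \hat L_t}_F$, obtain $\Delta D(\lambda)$, invoke Theorem~\ref{blockmin} to identify $\hat L_t(\lambda)+\Delta \hat L_t(\lambda)$ as a strong block minimal bases linearization of $\frac{1}{\alpha}(A+\Delta A)(\Lambda_{k,n}+\Delta D)$ of grade $k$ with the same index-shift structure (since $\hat C = I_m$), and then bound $\normp{\Delta P}_F$ with Lemma~\ref{normp_F} and Theorem~\ref{existanceof_delD}(b). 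Your explicit accounting of the constants ($\sqrt{2}+1/\sqrt{k}<3$ absorbed into the stated $\hat C_{\hat L_t,P}$) matches the bound the paper asserts, so no gap remains.
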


 \begin{proof} Clearly, \begin{equation*}
  \normp{\Delta \hat L_t}_F<\frac{\sigma_{\min}(\tilde R)}{2k^{3/2}}
  \Rightarrow \normp{\Delta B}_F<\frac{\sigma_{\min}(\tilde R)}{2k^{3/2}}.
   \end{equation*}
   By Theorem~\ref{existanceof_delD}, there exists $\Delta D(\lambda) \in \mathbb{F}[\lambda]^{kn\times n}$ of grade $k-1$ such that $B(\lambda)+\Delta B(\lambda)$ and $\Lambda_{k,n}(\lambda)^T+\Delta D(\lambda)^T$ are dual minimal bases with all the row degrees $1$ and $k-1$ respectively. Therefore $\hat{L}_t(\lambda) + \Delta \hat{L}_t(\lambda)$ is a strong block minimal bases pencil and Theorem~\ref{blockmin} implies that $\hat{L}_t(\lambda) + \Delta \hat{L}_t(\lambda)$ is a strong block minimal bases linearization of $$\frac{1}{\alpha}
(A(\lambda) + \Delta A(\lambda))(\Lambda_{k.n}(\lambda) + \Delta D(\lambda)) := P(\lambda) + \Delta P(\lambda)$$ of grade $k.$ As $P(\lambda)=\frac{1}{\alpha}A(\lambda)(\Lambda_{k,n}(\lambda)),$
   we have, $$\Delta P(\lambda)=\frac{1}{\alpha}\{(A(\lambda)+\Delta A(\lambda))\Delta D(\lambda)+\Delta A(\lambda)(\Lambda_{k,n}(\lambda))\}$$
   which implies that
  $$\normp{\Delta P}_F\leq \frac{1}{\lvert\alpha\rvert}\{\normp{A(\Delta D)}_F+\normp{(\Delta A)(\Delta D)}_F+\normp{(\Delta A)\Lambda_{k,n}}_F\}.$$
  By applying Lemma~\ref{normp_F} and Theorem~\ref{existanceof_delD}~(b), we get

  \begin{equation} \frac{\normp{\Delta P}_F}{\normp{P}_F}\leq \frac{1}{\lvert\alpha\rvert}\frac{\normp{\hat L_t}_F}{\normp{P}_F}\left(3+2k\frac{\normp{A}_F}{\sigma_{\min}(\tilde R)}\right)\frac{\normp{\Delta \hat L_t}_F}{\normp{\hat L_t}_F}
    \end{equation}

Also as the block $\hat B(\lambda)$ is absent in the linearization $\hat{L}_t(\lambda) + \Delta \hat{L}_t(\lambda),$ we have $\hat C(\lambda) = I_m$ in~\eqref{blockmin_eq} and consequently by Theorem~\ref{blockmin}, the right minimal indices of $\hat L_t(\lambda)+\Delta \hat L_t(\lambda)$ are those of $P(\lambda)+\Delta P(\lambda)$ shifted by $k-1$ and left minimal indices of $\hat L_t(\lambda)+\Delta \hat L_t(\lambda)$ are same as those of $P(\lambda)+\Delta P(\lambda).$ By Theorem~\ref{recovlt1}, the shifting relations between the left and right minimal indices of $P(\lambda) + \Delta P(\lambda)$ and $\hat{L}_t(\lambda) + \Delta \hat{L}_t(\lambda)$ are exactly the same as those between $P(\lambda)$ and $\hat{L}_t(\lambda).$
\end{proof}

%\subsection{ Global backward error analysis of algorithms using $L_t(\lambda)$}

Now we extend the above analysis to solutions of the complete eigenvalue problem for $P(\lambda)$ obtained via any linearization $L_t(\lambda)$ arising from a g-linearization in $\mathbb{L}_1(P).$ As noted in Section~\ref{lin}, any such linearization $L_t(\lambda)$ is strictly equivalent to a linearization of the form $\hat L_t(\lambda).$ Using this fact, and the results for $\hat L_t(\lambda),$ we have the following theorem.

\begin{theorem}\label{main_LT}

Let $L_t(\lambda)$ be any linearization of $P(\lambda) = \sum_{i=0}^k \lambda^iA_i \in \F[\lambda]^{m \times n}$ of grade $k$ with $m > n,$ arising from a g-linearization in $\mathbb{L}_1(P).$
  Let ${\hat L}_t(\lambda) = {\tilde D}^{-1}L_t(\lambda)$ where $\tilde D$ is as given in~\eqref{lt_hatlt}.
  Then ${\hat L}_t(\lambda)$ is of the form~\eqref{hatlt}. Let $A(\lambda)$ and $B(\lambda)$ be the blocks of ${\hat L}_t(\lambda)$ as specified by~\eqref{A} and \eqref{B} respectively and $\tilde{R} \in \F^{m + (k-1)n}$ be the nonsingular upper triangular matrix appearing in the block $B(\lambda).$ If $\Delta L_t(\lambda)$ be any pencil of the same size as $L_t(\lambda)$ such that
    \begin{equation}\label{finalbndt} \normp{\Delta L_t}_F<\frac{\sigma_{\min}(\tilde R)\sigma_{\min}(\tilde D)}{2k^{3/2}}, \end{equation}
  then $L_t(\lambda)+\Delta L_t(\lambda)$ is a strong linearization of a matrix polynomial $P(\lambda)+\Delta P(\lambda)$ as grade k such that
  \begin{equation}\label{finaleq2} \frac{\normp{\Delta P}_F}{\normp{P}_F}\leq C_{L_t,P} \frac{\normp{\Delta L_t}_F}{\normp{ L_t}_F}, \end{equation}
  where $C_{L_t,P}=\frac{\kappa_2(\tilde D)}{\lvert\alpha\rvert}\frac{\normp{\hat L_t}_F}{\normp{P}_F}\left(3+2k\frac{\normp{A}_F}{\sigma_{\min}(\tilde R)}\right),$ $\kappa_2({\tilde D})$ being the 2-norm condition number of $\tilde D.$ The right minimal indices of $ L_t(\lambda)+\Delta  L_t(\lambda)$ are those of $P(\lambda)+\Delta P(\lambda)$ shifted by $k-1$ and left minimal indices of $ L_t(\lambda)+\Delta  L_t(\lambda)$ are same as those of $P(\lambda)+\Delta P(\lambda)$, which is the same as the corresponding relations between the minimal indices of $L_t(\lambda)$ and $P(\lambda)$.
 \end{theorem}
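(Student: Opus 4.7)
The plan is to leverage the strict equivalence $L_t(\lambda) = \tilde D\,\hat L_t(\lambda)$ established in~\eqref{lt_hatlt} to reduce this statement to an application of Theorem~\ref{main_LThat} to $\hat L_t(\lambda)$. The key observation is that any perturbation of $L_t(\lambda)$ corresponds naturally to a perturbation of $\hat L_t(\lambda)$, and strict equivalence by a constant invertible matrix preserves the strong linearization property as well as the left and right minimal indices.

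More precisely, I would set $\Delta \hat L_t(\lambda) := \tilde D^{-1}\Delta L_t(\lambda)$, so that
$$L_t(\lambda) + \Delta L_t(\lambda) = \tilde D\bigl(\hat L_t(\lambda) + \Delta \hat L_t(\lambda)\bigr).$$
The routine coefficient-wise inequality $\normp{TR}_F \le \|T\|_2\normp{R}_F$ (which holds for any matrix polynomial $R(\lambda)$ and any constant matrix $T$ of compatible size, as a direct consequence of the definition of $\normp{\cdot}_F$ and submultiplicativity of the Frobenius norm) then gives
$$\normp{\Delta \hat L_t}_F \le \|\tilde D^{-1}\|_2\,\normp{\Delta L_t}_F = \frac{\normp{\Delta L_t}_F}{\sigma_{\min}(\tilde D)}.$$
Combining this with the hypothesis~\eqref{finalbndt} shows that $\normp{\Delta \hat L_t}_F < \sigma_{\min}(\tilde R)/(2k^{3/2})$, which is exactly the hypothesis~\eqref{finalbndthat} of Theorem~\ref{main_LThat}. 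Applying that theorem produces a matrix polynomial $\Delta P(\lambda)$ such that $\hat L_t(\lambda) + \Delta \hat L_t(\lambda)$ is a strong linearization of the grade $k$ polynomial $P(\lambda) + \Delta P(\lambda)$ satisfying the bound~\eqref{finaleq} with constant $\hat C_{\hat L_t,P}$.

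From here the remaining work is bookkeeping. The display above shows that $L_t(\lambda) + \Delta L_t(\lambda)$ is strictly equivalent to $\hat L_t(\lambda) + \Delta \hat L_t(\lambda)$, so it is also a strong linearization of $P(\lambda) + \Delta P(\lambda)$. To convert~\eqref{finaleq} into~\eqref{finaleq2}, I would use the companion bound $\normp{L_t}_F \le \|\tilde D\|_2\,\normp{\hat L_t}_F$, which together with the one above yields
$$\frac{\normp{\Delta \hat L_t}_F}{\normp{\hat L_t}_F} \le \kappa_2(\tilde D)\,\frac{\normp{\Delta L_t}_F}{\normp{L_t}_F}.$$
Substituting into~\eqref{finaleq} produces exactly $C_{L_t,P} = \kappa_2(\tilde D)\,\hat C_{\hat L_t,P}$, which is the claimed constant.

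The minimal indices assertion then follows immediately from two observations: first, strict equivalence by the constant invertible matrix $\tilde D$ preserves both left and right minimal indices, so the shifting relations between the minimal indices of $\hat L_t(\lambda)+\Delta \hat L_t(\lambda)$ and $P(\lambda)+\Delta P(\lambda)$ supplied by Theorem~\ref{main_LThat} carry over verbatim to $L_t(\lambda)+\Delta L_t(\lambda)$ and $P(\lambda)+\Delta P(\lambda)$; second, the identical shifting relations between the minimal indices of $L_t(\lambda)$ and $P(\lambda)$ are already recorded in Theorem~\ref{recovlt1}. I do not anticipate any real technical obstacle in this proof, since once the strict equivalence reduction is in place, everything follows from elementary norm inequalities and the results already established. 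The only point worth double-checking is that the transformation $\Delta L_t \mapsto \tilde D^{-1}\Delta L_t$ is a bijection on the space of perturbation pencils, ensuring no loss of generality, and this is immediate from the invertibility of $\tilde D$.
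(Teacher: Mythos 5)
Your proposal is correct and follows essentially the same route as the paper: reduce to Theorem~\ref{main_LThat} via the strict equivalence $L_t(\lambda)+\Delta L_t(\lambda)=\tilde D\bigl(\hat L_t(\lambda)+\tilde D^{-1}\Delta L_t(\lambda)\bigr)$, transfer the hypothesis and the bound using $\|\tilde D^{-1}\|_2=1/\sigma_{\min}(\tilde D)$ and $\normp{L_t}_F\le\|\tilde D\|_2\normp{\hat L_t}_F$ to obtain $C_{L_t,P}=\kappa_2(\tilde D)\hat C_{\hat L_t,P}$, and invoke invariance of minimal indices under strict equivalence together with Theorem~\ref{recovlt1}. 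Your explicit spelling out of the coefficient-wise inequality $\normp{TR}_F\le\|T\|_2\normp{R}_F$ is a detail the paper leaves implicit, but the argument is the same.
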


 \begin{proof} Evidently, ${\hat L}_t(\lambda) := {\tilde D}^{-1}L_t(\lambda)$ is of the form form~\eqref{hatlt}. Since $L_t(\lambda) + \Delta L_t(\lambda) = {\tilde D}({\hat L}_t(\lambda) + {\tilde D}^{-1} \Delta L_t (\lambda)),$ and~\eqref{finalbndt} implies that
 $$\normp{{\tilde D}^{-1}\Delta L_t}_F<\frac{\sigma_{\min}(\tilde R)}{2k^{3/2}},$$ by Theorem~\ref{main_LThat}, ${\hat L}_t(\lambda) + {\tilde D}^{-1} \Delta L_t (\lambda)$ is a strong block minimal bases linearization of some polynomial $P(\lambda) + \Delta P(\lambda)$ of grade $k$ such that
 \begin{equation} \frac{\normp{\Delta P}_F}{\normp{P}_F}\leq \frac{1}{\lvert\alpha\rvert}\frac{\normp{\hat L_t}_F}{\normp{P}_F}\left(3+2k\frac{\normp{A}_F}{\sigma_{\min}(\tilde R)}\right)\frac{\normp{{\tilde D}^{-1} \Delta L_t}_F}{\normp{\hat L_t}_F}. \end{equation}

The relation~\eqref{finaleq2} now follows by using the fact that $L_t(\lambda)=\tilde D\hat L_t(\lambda).$ Also as $L_t(\lambda) + \Delta L_t(\lambda) = {\tilde D}({\hat L}_t(\lambda) + {\tilde D}^{-1}\Delta L_t(\lambda)),$ ${\hat L}_t(\lambda) + {\tilde D}^{-1} \Delta L_t (\lambda)$ are strictly equivalent, $L_t(\lambda) + \Delta L_t(\lambda)$ is a strong linearization of $P(\lambda) + \Delta P(\lambda)$ and the recovery rules for the left and right minimal indices of $P(\lambda) + \Delta P(\lambda)$ from those of $L_t(\lambda) + \Delta L_t(\lambda)$ are the same as the ones from ${\hat L}_t(\lambda) + {\tilde D}^{-1} \Delta L_t (\lambda).$ Therefore it follows from Theorem~\ref{main_LThat}, that the right minimal indices of $ L_t(\lambda)+\Delta  L_t(\lambda)$ are those of $P(\lambda)+\Delta P(\lambda)$ shifted by $k-1$ and left minimal indices of $ L_t(\lambda)+\Delta  L_t(\lambda)$ are same as those of $P(\lambda)+\Delta P(\lambda)$, which is same as the corresponding relations between the minimal indices of $L_t(\lambda)$ and $P(\lambda)$.
\end{proof}
%
%
%  Consider $\Delta L_t(\lambda)=\tilde D\Delta \hat L_t(\lambda)$. As given $\normp{\Delta L_t}_F<\frac{\sigma_{\min}(\tilde R)}{2k^{3/2}{\|\tilde D^{-1}\|}_2}$, so $\normp{\Delta \hat L_t}_F\leq {\|\tilde D^{-1}\|}_2\normp{\Delta L_t}_F< \frac{\sigma_{\min}(\tilde R)}{2k^{3/2}}$. Hence from Theorem~\ref{main_LThat} $\hat L_t(\lambda)+\Delta \hat L_t(\lambda)$ is a strong linerization of $P(\lambda)+\Delta P(\lambda)$ as a grade $k$ matrix polynomial. As $L_t(\lambda)$ and $\hat L_t(\lambda)$ is strictly equivalent  $ L_t(\lambda)+\Delta L_t(\lambda)$ is also a strong linerization of $P(\lambda)+\Delta P(\lambda)$ as a grade $k$ matrix polynomial. Also
%  \begin{eqnarray*}
%   \frac{\normp{\Delta P}_F}{\normp{P}_F}&\leq& \frac{1}{\lvert\alpha\rvert}\frac{\normp{\hat L_t}_F}{\normp{P}_F}(3+2k\frac{\normp{A}_F}{\sigma_{\min}(\tilde R)}) \frac{\normp{\Delta \hat L_t}_F}{\normp{\hat L_t}_F}\\
%   &\leq&  \frac{1}{\lvert\alpha\rvert}\frac{\normp{\hat L_t}_F}{\normp{P}_F}(3+2k\frac{\normp{A}_F}{\sigma_{\min}(\tilde R)}) \frac{{\|\tilde D^{-1}\|}_2\normp{\Delta L_t}_F}{\frac{1}{{\|\tilde D\|}_2}\normp{ L_t}_F}\\
%   &=&\frac{\kappa_2(\tilde D)}{\lvert\alpha\rvert}\frac{\normp{\hat L_t}_F}{\normp{P}_F}(3+2k\frac{\normp{A}_F}{\sigma_{\min}(\tilde R)}) \frac{\normp{\Delta L_t}_F}{\normp{ L_t}_F}.
%  \end{eqnarray*}
% Correspondence between the minimal indices of $L_t(\lambda)$ and $P(\lambda)$ follows clearly.

Observe that as the block $\hat B(\lambda)$ is absent in the pencil $\hat{L}_t(\lambda)$ when compared with the block minimal bases pencil~\eqref{minb}, this greatly simplifies the analysis as the arguments in pages 24-30 of~\cite{DopLPV16} for Block Kronecker linearizations  may be skipped as a consequence.

If the complete eigenvalue problem for $L_t(\lambda)$ is solved by using a backward stable algorithm, then $\frac{\normp{\Delta L_t}_F}{\normp{L_t}_F} = O({\bf u}).$  In such a situation~\eqref{finaleq2} shows that the process of solving the complete eigenvalue problem for $P(\lambda)$ via linearizations $L_t(\lambda),$ is globally backward stable if $C_{ L_t,P}$ is not very large. As $C_{L_t, P} =\kappa_2(\tilde D) {\hat C}_{\hat L_t, P}$, so a good choice of $L_t(\lambda)$ would be one for which $\kappa_2(\tilde D) \approxeq 1$ and ${\hat C}_{\hat L_t, P}$ is not large for the corresponding pencil $\hat L_t(\lambda) = {\tilde D}^{-1}L_t(\lambda).$ To identify such linearizations, we first note that for the block $A(\lambda)$ of ${\hat L}_t(\lambda),$
$$\alpha P(\lambda)=A(\lambda)\Lambda_{k,n}(\lambda)\Rightarrow \left|\alpha\right|\normp{ P}_F=\normp{A\Lambda_{k,n}}_F\leq \sqrt{2}\normp{A}_F.$$ This implies that
 $$\frac{\normp{\hat L_t}_F}{\normp{P}_F} \geq \frac{\normp{A}_F}{\normp{P}_F} \geq \frac{\left|\alpha\right|}{\sqrt{2}}.$$

Now if $\left|\alpha\right|\normp{P}_F>>\sigma_{\min}(\tilde R)$ then $\frac{\sqrt{2}\normp{A}_F}{\sigma_{\min}(\tilde R)}>>1$, and since $\frac{\normp{\hat L_t}_F}{\left|\alpha\right|\normp{P}_F}\geq \frac{1}{\sqrt{2}}$ so $\hat C_{\hat L_t,P}$ will be big. Again if $\left|\alpha\right|\normp{P}_F<<\sigma_{\min}(\tilde R)$ then
$\frac{{\normp{\hat L_t}}_F}{\left|\alpha\right|\normp{P}_F} > \frac{{\|\tilde R\|}_F}{\left|\alpha\right|\normp{P}_F}>>1$ and once again $\hat C_{\hat L_t,P}$ will be big. So, a good choice of $\hat L_t(\lambda)$ would be one for which $\left|\alpha\right|\normp{P}_F\approxeq\sigma_{\min}(\tilde R)$.

Besides, if $\normp{A}_F \approxeq \left|\alpha\right|\normp{P}_F\approxeq \sigma_{\min}(\tilde R)$, and $\kappa_2(\tilde R) \approxeq 1$ then $$C_{L_t, P} \approxeq (3+2k)\sqrt{1+2(k-1)n}$$ and then $$\frac{\normp{\Delta P}_F}{\normp{P}_F}\lessapprox (3+2k)\sqrt{1+2(k-1)n}\frac{\normp{\Delta L_t}_F}{\normp{ L_t}_F}.$$

In summary, by using linearizations $L_t(\lambda)$ satisfying
\begin{itemize}
\item[(i)] $\kappa_2(\tilde{D}) \approxeq 1$ and $\kappa_2(\tilde R) \approxeq 1$ and
\item[(ii)] $\normp{A}_F \approxeq \left|\alpha\right|\normp{P}_F\approxeq \sigma_{\min}(\tilde R)$,
\end{itemize}
we will have $\frac{\normp{\Delta P}_F}{\normp{P}_F} = O({\bf u})$ if $\frac{\normp{\Delta L_t}_F}{\normp{L_t}_F} = O({\bf u}).$ So the complete eigenvalue problem for $P(\lambda)$ can be solved in a globally backward stable manner by using backward stable algorithms to solve the complete eigenvalue problem for such choices of $L_t(\lambda).$
The optimal Block Kronecker linearizations of the form $\left[\begin{array}{c} A(\lambda) \\ B(\lambda) \end{array}\right]$ ensuring global backward stability that were identified in~\cite{DopLPV16} are included in the above choices. In fact they are the ones for which $\tilde D = I_{m + (k-1)n},$ $|\alpha| = 1/\normp{P}_F,$ $\tilde{R} = I_{(k-1)n}$ and $\|X_{12}\|_F^2 + \|Y_{11}\|_F^2 \approxeq \frac{1}{{\normp{P}_F}^2}\sum_{i=1}^{k-1}\|A_i\|_F^2$ in~\eqref{stronglin} which include the Frobenius companion form $C_1(\lambda).$ Our analysis shows that there exist many more choices of linearizations from among the pencils $L_t(\lambda)$ with which the complete eigenvalue problem for $P(\lambda)$ can be solved in a globally backward stable manner.

\section{Conclusion} Given an $m \times n$ rectangular matrix polynomial $P(\lambda) = \sum_{i=0}^k \lambda^iA_i,$ of grade $k,$ in this paper we have introduced the notion of a generalized linearization (g-linearization) of $P(\lambda).$  We have also constructed vector spaces of rectangular matrix pencils such that almost every matrix pencil in the space provides solutions of the complete eigenvalue problem for $P(\lambda)$ with the property that the left and right minimal indices and bases of $P(\lambda)$ can be easily extracted from those of the pencil. These spaces become the vector spaces $\L_1(P)$ and $\L_2(P)$ introduced in~\cite{MacMMM06a} whenever $P(\lambda)$ is square. They also have the same properties with respect to g-linearizations that the spaces $\L_1(P)$ and $\L_2(P)$ have with respect to linearizations (as shown in~\cite{DeDM09}) when $P(\lambda)$ is square and singular. The results provide a direct extension of the theory of the vector spaces $\L_1(P)$ and $\L_2(P)$ to the case of rectangular matrix polynomials. We have also shown a process of extracting many different strong linearizations from almost every pencil in $\L_1(P)$ and $\L_2(P).$ We believe that our work complements the recent work in~\cite{FasS18} which allows the study of linearizations of rectangular matrix pencils in a vector space setting  by introducing the Block Kronecker ansatz spaces. While~\cite{FasS18} gives the relationship between the particular Block Kronecker ansatz spaces $\mathbb{G}_1(P)$ and $\mathbb{G}_k(P)$ and the spaces $\L_1(P)$ and $\L_2(P)$ respectively when $P(\lambda)$ is square and regular, our work extends the notion of the spaces $\L_1(P)$ and $\L_2(P)$ to the rectangular case and shows the relationship between strong linearizations of $P(\lambda)$ extracted from the pencils in $\L_1(P)$ and $\L_2(P)$ and the linearizations in $\mathbb{G}_1(P)$ and $\mathbb{G}_k(P).$

A global backward error analysis of the process of solving the complete eigenvalue problem for $P(\lambda)$ via the linearizations that can be extracted from strong g-linearizations in $\L_1(P)$ and $\L_2(P)$ was also conducted on the lines of the one in~\cite{DopLPV16}. It showed that these g-linearizations provide a wide choice of linearizations that can solve the eigenvalue problem for $P(\lambda)$ in a globally backward stable manner. This analysis which was not carried out earlier even for the case that $P(\lambda)$ is square, will be useful in making optimal choices of linearizations in computation. Moreover, we believe that when $P(\lambda)$ is square and has some additional structure, the results may be extended to identify larger collections of optimal structure preserving linearizations beyond the ones known in the literature, with respect to which the eigenvalue problem for $P(\lambda)$ can be solved in a globally backward stable manner.

\section{Appendix: Proof of Lemma~\ref{tautheorem}}
%\begin{theorem}
%$\sigma_{\min}\left(C_{k-2}\left(\lambda\begin{bmatrix}0_{(k-1)n\times n} & -I_{(k-1)n}\end{bmatrix}+\begin{bmatrix}I_{(k-1)n} & 0_{(k-1)n\times n}\end{bmatrix}\right)\right)\\= \sigma_{\min}\left(C_{k-1}\left(\lambda\begin{bmatrix}0_{(k-1)n\times n} & -I_{(k-1)n}\end{bmatrix}+\begin{bmatrix}I_{(k-1)n} & 0_{(k-1)n\times n}\end{bmatrix}\right)\right)=2\sin\left(\frac{\pi}{4k-2}\right)$.
%\end{theorem}
%\begin{proof}
 As the result is obvious for $k=2$, we assume that $k> 2$. Since $C_j(\tau) = C_j(H_{k-1}) \otimes I_n$ for $j = k-1, k-2,$ it is enough to show that \begin{equation}\label{sigmin} \sigma_{\min}(C_{k-1}(H_{k-1})) = \sigma_{\min}(C_{k-2}(H_{k-1})) = 2\sin\left(\frac{\pi}{4k-2}\right).\end{equation}
 For simplicity, we denote $C_j(H_{k-1})$ by $C_j$ for $j = k-1, k-2.$ To complete the proof it is enough to show that both the matrices $C_{k-1}$ and $C_{k-2}$ are full rank and the smallest nonzero eigenvalues of $S_{k-2} := C_{k-2}^*C_{k-2}$ and $S_{k-1} := C_{k-1}^*C_{k-1}$ are both equal to $2 + 2 \cos \left(\frac{2(k-1)\pi}{2k-1}\right).$ For $j \geq 1,$ let
  %$D_j,L_j\in\mathbb{F}^{j \times j}$ such that
 \begin{center}
 $D_j=\begin{bmatrix}1&&&&\\&2&&&\\&&\ddots&&\\&&&2&\\&&&&1\end{bmatrix}_{j \times j},
 L_j=\begin{bmatrix}0&&&\\-1&0&&\\&\ddots&\ddots&\\&&-1&0\end{bmatrix}_{j \times j} \mbox{ and,}$\\ $T_j = D_j + L_j + L_j^T + e_je_j^T, {\hat T}_j = D_j + L_j + L_j^T + e_1e_1^T,$
 \end{center}
 where $e_j$ is the $j$-th column of $I_j.$
 Now a simple multiplication shows that $$S_{k-2}=\underbrace{\begin{bmatrix}D_k&L_k^T&&&&\\L_k&D_k&L_k^T&&&\\&\ddots&\ddots&\ddots&&\\&&\ddots&\ddots&\ddots&\\&&&L_k&D_k&L_k^T\\&&&&L_k&D_k\end{bmatrix}}_{(k-1)\text{ block columns.}} \mbox{ and } S_{k-1}=\underbrace{\begin{bmatrix}D_k&L_k^T&&&&\\L_k&D_k&L_k^T&&&\\&\ddots&\ddots&\ddots&&\\&&\ddots&\ddots&\ddots&\\&&&L_k&D_k&L_k^T\\&&&&L_k&D_k\end{bmatrix}}_{k\text{ block columns.}}.$$
 %where $D_j,L\in\mathbb{F}^{k \times k}$ such that
% $$D=\begin{bmatrix}1&&&&\\&2&&&\\&&\ddots&&\\&&&2&\\&&&&1\end{bmatrix},
% L=\begin{bmatrix}0&&&\\-1&0&&\\&\ddots&\ddots&\\&&-1&0\end{bmatrix}.$$
%
%
%
% \item[(a)]
 To find the smallest eigenvalue of $S_{k-2}$ consider the permutation matrix $$\mathcal{\tilde P}=\begin{bmatrix} \mathcal{\tilde P}_1& \mathcal{\tilde P}_2&\dots& \mathcal{\tilde P}_{k+1}\end{bmatrix}\in \mathbb{C}^{k(k-1)\times k(k-1)}$$ where $ \mathcal{\tilde P}_1=\begin{bmatrix}{\tilde e}_1&{\tilde e}_{1+(k+1)}&\dots&{\tilde e}_{1+(k-2)(k+1)}\end{bmatrix}$,
 $\mathcal{\tilde P}_2=\begin{bmatrix}{\tilde e}_2&{\tilde e}_{2+(k+1)}&\dots&{\tilde e}_{2+(k-2)(k+1)}\end{bmatrix}$ and \\
 $\mathcal{\tilde P}_i=\begin{bmatrix}{\tilde e}_{i}&{\tilde e}_{i+(k+1)}&\dots&{\tilde e}_{i+(k-3)(k+1)}\end{bmatrix}$, for $i=3,4,\dots,(k+1)$. Here ${\tilde e}_j$ is the $j$-th column of $I_{k(k-1)}$. Then $ \mathcal{\tilde P}^TS_{k-2}\mathcal{\tilde P}$ is a block diagonal matrix of $(k+1)$ blocks where the first block is $T_{k-1},$ the second block is ${\hat T}_{k-1}$, and
the $j$-th block is $\begin{bmatrix}{\hat T}_{k-j+1} & \\ & T_{j-3}\end{bmatrix}$ for $j = 3,4,\dots,(k+1)$ with $T_0$ and $\hat{T}_0$ being empty matrices.

 Clearly the first two blocks have the same eigenvalues and the sub-blocks of all other blocks are submatrices of the first or second block. Hence the smallest eigenvalue of $S_{k-2}$ is the smallest eigenvalue of any one of the first $2$ blocks, in particular of the second block ${\hat T}_{k-1}.$

% \item[(b)]
 To find the smallest eigenvalue of $S_{k-1},$ consider the permutation matrix $$\mathcal{\hat P}=\begin{bmatrix}\mathcal{\hat P}_1&\mathcal{\hat P}_2&\dots& \mathcal{\hat P}_{k+1}\end{bmatrix}\in \mathbb{C}^{k^2\times k^2}$$
 where $ \mathcal{\hat P}_1=\begin{bmatrix}{\hat e}_1&{\hat e}_{1+(k+1)}&\dots&{\hat e}_{1+(k-1)(k+1)}\end{bmatrix}$ and $ \mathcal{\hat P}_i=\begin{bmatrix}{\hat e}_{i}&{\hat e}_{i+(k+1)}&\dots&{\hat e}_{i+(k-2)(k+1)}\end{bmatrix}$, for $i=2,3,\dots,(k+1)$. Here ${\hat e}_j$ is the $j$-th column of $I_{k^2}$. Then $\mathcal{\hat P}^TS_{k-1}\mathcal{\hat P}$ is a block diagonal matrix of $(k+1)$ blocks where  the
first block is $D_k + L_k + L_k^T,$ the second block is ${\hat T}_{k-1},$ and
 the $j$-th block is
 $\begin{bmatrix}{\hat T}_{k-j+1} & \\ & T_{j-2}\end{bmatrix}$ for $j=3,4,\dots,(k+1)$ with ${\hat T}_0$ being the empty matrix.

 Clearly $0$ is an eigenvalue of the first block and the second block can be obtained by removing the first row and first column of the first block. Hence the smallest eigenvalue of the second block is less than or equal to the second smallest eigenvalue of the first block. Again the sub-blocks of all other blocks are either submatrices of the second block ${\hat T}_{k-1}$  or of $T_{k-1}.$ Since $T_{k-1}$ and ${\hat T}_{k-1}$ have the same eigenvalues, the smallest eigenvalue of $S_{k-1}$ is $0$ and the second smallest eigenvalue is the smallest eigenvalue of the second block ${\hat T}_{k-1}.$

 From \cite[Theorem~1]{Yue05} the smallest eigenvalue of ${\hat T}_{k-1}$ is $2+2\cos\left(\frac{2(k-1)\pi}{2k-1}\right) (\neq 0)$.
 Hence $2+2\cos\left(\frac{2(k-1)\pi}{2k-1}\right)$ is the smallest nonzero eigenvalue of both the matrices $S_{k-1}$ and $S_{k-2}.$ This proves that $C_{k-1}$ and $C_{k-2}$ are both full rank such that~\eqref{sigmin} holds. \hfill{$\square$}
 %$\sigma_{\min}(C_{k-2}(\lambda\begin{bmatrix}0_{(k-1)n\times n} & -I_{(k-1)n}\end{bmatrix}+\begin{bmatrix}I_{(k-1)n} & 0_{(k-1)n\times n}\end{bmatrix}))\\= \sigma_{\min}(C_{k-1}(\lambda\begin{bmatrix}0_{(k-1)n\times n} & -I_{(k-1)n}\end{bmatrix}+\begin{bmatrix}I_{(k-1)n} & 0_{(k-1)n\times n}\end{bmatrix}))=\sqrt{2+2\cos\frac{2(k-1)\pi}{2(k-1)+1}}=2\sin(\frac{\pi}{4k-2})$.

\bibliographystyle{plain}
\bibliography{lin_recv2}

\begin{thebibliography}{100}

\bibitem{AntV04}
E.N. Antoniou and S.~Vologiannidis.
\newblock A new family of companion forms of polynomial matrices.
\newblock {\em Electron. J. Linear Algebra}, 11(411):78--87, 2004.

\bibitem{BueDD11}
M.I. Bueno, F.~De~Ter{\'a}n, and F.M. Dopico.
\newblock Recovery of eigenvectors and minimal bases of matrix polynomials from
  generalized {F}iedler linearizations.
\newblock {\em SIAM Journal on Matrix Analysis and Applications},
  32(2):463--483, 2011.

\bibitem{BueDPSZ16}
M.I. Bueno, F.M. Dopico, J.~P{\'e}rez, R.~Saavedra, and B.~Zykoski.
\newblock A unified approach to {F}iedler-like pencils via strong block minimal
  bases pencils.
\newblock {\em arXiv preprint arXiv:1611.07170}, 2016.

\bibitem{BueF12}
M.I. Bueno and S.~Furtado.
\newblock Palindromic linearizations of a matrix polynomial of odd degree
  obtained from {F}iedler pencils with repetition.
\newblock {\em Electron. J. Linear Algebra}, 23:562--577, 2012.

\bibitem{DeDM09}
F.~De~Ter{\'a}n, F.M. Dopico, and D.S. Mackey.
\newblock Linearizations of singular matrix polynomials and the recovery of
  minimal indices.
\newblock {\em Electron. J. Linear Algebra}, 18:371--402, 2009.

\bibitem{DeDM10}
F.~De~Ter{\'a}n, F.M. Dopico, and D.S. Mackey.
\newblock Fiedler companion linearizations and the recovery of minimal indices.
\newblock {\em SIAM Journal on Matrix Analysis and Applications},
  31(4):2181--2204, 2010.

\bibitem{DeDM12}
F.~De~Ter{\'a}n, F.M. Dopico, and D.S. Mackey.
\newblock Fiedler companion linearizations for rectangular matrix polynomials.
\newblock {\em Linear Algebra and its Applications}, 437(3):957--991, 2012.

\bibitem{DemK93a}
J.~Demmel and B.~K{\aa}gstr{\"o}m.
\newblock The generalized {S}chur decomposition of an arbitrary pencil
  {A}--$\lambda${B}:— robust software with error bounds and applications.
  {P}art {I}: theory and algorithms.
\newblock {\em ACM Transactions on Mathematical Software (TOMS)},
  19(2):160--174, 1993.

\bibitem{DemK93b}
J.~Demmel and B.~K{\aa}gstr{\"o}m.
\newblock The generalized {S}chur decomposition of an arbitrary pencil
  {A}--$\lambda${B}:— robust software with error bounds and applications.
  {P}art {II}: software and applications.
\newblock {\em ACM Transactions on Mathematical Software (TOMS)},
  19(2):175--201, 1993.

\bibitem{DopLPV16}
F.M. Dopico, P.W. Lawrence, J.~P{\'e}rez, and P.~Van~Dooren.
\newblock Block {K}ronecker linearizations of matrix polynomials and their
  backward errors.
\newblock {\em MIMS EPrint 2016.34}, 2016.

\bibitem{DopLPV18}
F.M. Dopico, P.W. Lawrence, J.~P{\'e}rez, and P.~Van~Dooren.
\newblock Block {K}ronecker linearizations of matrix polynomials and their
  backward errors.
\newblock {\em Numerische Mathematik},
  https://doi.org/10.1007/s00211-018-0969-z, 2018.

\bibitem{FasMMS08}
H.~Fa{\ss}bender, D.S. Mackey, N.~Mackey, and C.~Schroeder.
\newblock Structured polynomial eigenproblems related to time-delay systems.
\newblock {\em Electronic Transactions on Numerical Analysis}, 31:306--330,
  2008.

\bibitem{FasS17}
H.~Fa{\ss}bender and P.~Saltenberger.
\newblock On vector spaces of linearizations for matrix polynomials in
  orthogonal bases.
\newblock {\em Linear Algebra and its Applications}, 525:59--83, 2017.

\bibitem{FasS18}
H.~Fa{\ss}bender and P.~Saltenberger.
\newblock Block {K}ronecker ansatz spaces for matrix polynomials.
\newblock {\em Linear Algebra and its Applications}, 542:118--148, 2018.

\bibitem{For75}
G.D. Forney.
\newblock Minimal bases of rational vector spaces, with applications to
  multivariable linear systems.
\newblock {\em SIAM Journal on Control}, 13(3):493--520, 1975.

\bibitem{GohLR82}
I.~Gohberg, P.~Lancaster, and L.~Rodman.
\newblock {\em Matrix polynomials}.
\newblock Academic Press, New York, 1982.

\bibitem{GolV13}
G.H. Golub and C.F. Van~Loan.
\newblock Matrix computations, 4th.
\newblock {\em Johns Hopkins}, 2013.

\bibitem{HigMT09}
N.J. Higham, D.S. Mackey, and F.~Tisseur.
\newblock Definite matrix polynomials and their linearization by definite
  pencils.
\newblock {\em SIAM Journal on Matrix Analysis and Applications},
  31(2):478--502, 2009.

\bibitem{Kai80}
T.~Kailath.
\newblock {\em Linear systems}, volume 156.
\newblock Prentice-Hall Englewood Cliffs, NJ, 1980.

\bibitem{MacMMM06b}
D.S. Mackey, N.~Mackey, C.~Mehl, and V.~Mehrmann.
\newblock Structured polynomial eigenvalue problems: Good vibrations from good
  linearizations.
\newblock {\em SIAM Journal on Matrix Analysis and Applications},
  28(4):1029--1051, 2006.

\bibitem{MacMMM06a}
D.S. Mackey, N.~Mackey, C.~Mehl, and V.~Mehrmann.
\newblock Vector spaces of linearizations for matrix polynomials.
\newblock {\em SIAM Journal on Matrix Analysis and Applications},
  28(4):971--1004, 2006.

\bibitem{MacMT15}
D.S. Mackey, N.~Mackey, and F.~Tisseur.
\newblock Polynomial eigenvalue problems: Theory, computation, and structure.
\newblock In {\em Numerical Algebra, Matrix Theory, Differential-Algebraic
  Equations and Control Theory}, pages 319--348. Springer, 2015.

\bibitem{NakNT17}
Y.~Nakatsukasa, V.~Noferini, and A.~Townsend.
\newblock Vector spaces of linearizations for matrix polynomials: a bivariate
  polynomial approach.
\newblock {\em SIAM Journal on Matrix Analysis and Applications}, 38(1):1--29,
  2017.

\bibitem{Ros70}
H.H. Rosenbrock.
\newblock {\em State-space and multivariable theory}.
\newblock Studies in dynamical systems. Nelson, London, 1970.

\bibitem{TisM01}
F.~Tisseur and K.~Meerbergen.
\newblock The quadratic eigenvalue problem.
\newblock {\em SIAM review}, 43(2):235--286, 2001.

\bibitem{Van79}
P.~Van~Dooren.
\newblock The computation of {K}ronecker's canonical form of a singular pencil.
\newblock {\em Linear Algebra and its Applications}, 27:103--140, 1979.

\bibitem{VanD83}
P.~Van~Dooren and P.~Dewilde.
\newblock The eigenstructure of an arbitrary polynomial matrix: computational
  aspects.
\newblock {\em Linear Algebra and its Applications}, 50:545--579, 1983.

\bibitem{Var91}
A.I.G. Vardulakis.
\newblock {\em Linear multivariable control: algebraic analysis and synthesis
  methods}.
\newblock J. Wiley, 1991.

\bibitem{VolA11}
S.~Vologiannidis and E.N. Antoniou.
\newblock A permuted factors approach for the linearization of polynomial
  matrices.
\newblock {\em Mathematics of Control, Signals, and Systems}, 22(4):317--342,
  2011.

\bibitem{Yue05}
W.C. Yueh.
\newblock Eigenvalues of several tridiagonal matrices.
\newblock {\em Applied mathematics e-notes}, 5:66--74, 2005.

\end{thebibliography}

\end{document}